\newfont{\cyr}{wncyr10 scaled 1100}
\theoremstyle{plain}
\newtheorem{theorem}{Theorem}[section]
\newtheorem*{theorem*}{Theorem}
\newtheorem{corollary}[theorem]{Corollary}
\newtheorem{lemma}[theorem]{Lemma}
\newtheorem{proposition}[theorem]{Proposition}
\newtheorem{conjecture}[theorem]{Conjecture}
\newtheorem{assumption}[theorem]{Assumption}
\theoremstyle{definition}
\theoremstyle{remark}
\newtheorem{obswr}[theorem]{Observation}
\newtheorem{remarkwr}[theorem]{Remark}
\newtheorem{example}[theorem]{Example}
\newenvironment{remark}{\begin{remarkwr}\begin{upshape}}{\end{upshape}\end{remarkwr}}
\DeclareMathOperator{\alg}{alg}
\DeclareMathOperator{\fp}{\mathfrak{p}}
\DeclareMathOperator{\w}{w}
\DeclareMathOperator{\cts}{cts}
\DeclareMathOperator{\Ad}{Ad}
\DeclareMathOperator{\Ind}{Ind}
\newcommand{\hg}{{\mathbf{g}}}
\newcommand{\fu}{{\mathfrak{u}}}
\newcommand{\fv}{{\mathfrak{v}}}
\newcommand{\fw}{{\mathfrak{W}}}
\newcommand{\cW}{\mathcal W}
\newcommand{\V}{\mathcal V}
\newcommand{\Q}{\mathbb{Q}}
\newcommand{\Z}{\mathbb{Z}}
\newcommand{\C}{\mathbb{C}}
\newcommand{\Gal}{\mathrm{Gal\,}}
\newcommand{\GL}{\mathrm{GL}}
\newcommand{\Tr}{\mathrm{Tr}}
\newcommand{\Frob}{\mathrm{Fr}}
\newcommand{\End}{\mathrm{End}}
\newcommand{\Aut}{\mathrm{Aut}}
\newcommand{\ord}{{\mathrm{ord}}}
\newfont{\gotip}{eufb10 at 12pt}
\newcommand{\cO}{{\mathcal O}}
\newcommand{\cL}{{\mathcal L}}
\newcommand{\ra}{\rightarrow}
\newcommand{\lra}{\longrightarrow}
\DeclareMathOperator{\Hom}{Hom} 
\DeclareMathOperator{\cyc}{cyc}
\newcommand{\res}{\mathrm{res}}
 \DeclareMathOperator{\frob}{Fr}
\newcommand{\TT}{\mathbb{T}}
\def\lra{{\longrightarrow}}
\def\GL{{\bf GL}}
\def\rp1{r\!\!+\!\!1}
\def\psim{{\psi}}
\def\psig{{\psi_g}}
\begin{document}

\title[Deformations of weight one newforms]
{ First order $p$-adic deformations \\ of weight one newforms }

\author{Henri Darmon, Alan Lauder and Victor Rotger}

\begin{abstract}
This article  studies the   first-order
 $p$-adic deformations of  classical   weight one newforms,
 relating  their fourier coefficients to 
 the $p$-adic logarithms of algebraic numbers in the 
 field cut out by the associated projective Galois representation.
    \end{abstract}

\address{H. D.: McGill University, Montreal, Canada}
\email{darmon@math.mcgill.ca}
\address{A. L.: University of Oxford, U. K.}
\email{lauder@maths.ox.ac.uk}
\address{V. R.: Universitat Polit\`{e}cnica de Catalunya, Barcelona, Spain}
\email{victor.rotger@upc.edu}


\maketitle

\tableofcontents

\section*{Introduction}
 
 Let  $g$
 be a   classical cuspidal newform of weight one, level $N$ and nebentypus  character
$\chi: (\Z/N\Z)^\times \ra \Q_p^\times$, with fourier expansion
 $$g(q)  = \sum_{n = 1}^\infty a_n  q^n. $$
 The  {\em  $p$-stabilisations }  of $g$ 
 attached to a rational prime  $p\nmid N$ are the eigenforms of level $Np$ defined by
\begin{equation}
\label{eqn:p-stabilisations}
 g_\alpha(q):= g(q) - \beta \cdot g(q^p), \qquad 
 g_\beta(q) := g(q) - \alpha \cdot g(q^p),
 \end{equation}
 where $\alpha$ and $\beta$ are the (not necessarily distinct) roots of the 
 Hecke polynomial
$$ x^2-a_p  x + \chi(p) =: (x-\alpha)(x-\beta).$$
The forms $g_\alpha$ and $g_\beta$ are eigenvectors for the Atkin $U_p$ operator,
with eigenvalues $\alpha$ and $\beta$ respectively.  Since $\alpha$ and $\beta$ are
roots of unity, these eigenforms are both  {\em ordinary} at $p$.

An important feature of classical weight one forms is that they are associated to 
  odd, irreducible, two-dimensional Artin representations, 
     via a construction of Deligne-Serre.
   Let $\varrho_g:G_\Q\lra \GL_2(\C)$ 
   denote this Galois representation, and write $V_g$ for the underlying representation space.

A fundamental result of Hida asserts the existence of a {\em $p$-adic family } of ordinary eigenforms
specialising to $g_\alpha$ (or to $g_\beta$) in weight one.  
Bellaiche and Dimitrov \cite{BeDi}  later established the   uniqueness  of this 
 Hida family,   under the hypothesis that 
  $g$ is {\em regular at $p$} , i.e., that
 $\alpha \ne \beta$, 
 or equivalently,  that 
 the frobenius element at $p$ acts on $V_g$ with distinct eigenvalues. 
 In the intriguing special case where  $g$ is the theta series of a character
  of a real quadratic field $F$ in which 
the prime $p$ is split, the result of Bellaiche-Dimitrov further asserts that
the  unique ordinary
first-order  infinitesimal $p$-adic  deformation of $g$ 
is an overconvergent (but not classical) modular form
of weight one. 
  In \cite{DLR3}, the Fourier coefficients of this non-classical form
 were expressed as $p$-adic logarithms
of algebraic numbers in  a ring class field of $F$, suggesting that a closer examination of 
such deformations could have some relevance to explicit class field theory
for real quadratic fields.
 
The primary purpose of this  note  is extend the results of \cite{DLR3} to general
 weight one eigenforms.

Part  \ref{sec:regular} considers the regular setting where $\alpha\ne \beta$, in which
 the results exhibit a close analogy  to those of \cite{DLR3}. 

Part \ref{sec:irregular} takes up the case where $g$ is irregular at $p$. Here the
 results  are more fragmentary and less definitive. 
 Let $S_1^{(p)}(N,\chi)$ denote the  space of $p$-adic overconvergent modular forms
of  weight $1$, level $N$, and character $\chi$, and let 
 $S_1^{(p)}(N,\chi)[[g]]$ denote the generalised eigenspace
 attached to the system of Hecke eigenvalues of an irregular weight one form $g\in S_1(N,\chi)$. 
 The main conjecture of the
   second part  asserts that
$S_1^{(p)}(N,\chi)[[g]]$  is always four dimensional, with a two-dimensional subspace
 consisting of classical forms. Under this conjecture,  an 
explicit description of the elements of the generalised eigenspace
 in terms of their $q$-expansions
is provided.  
 The  resulting concrete description of the    generalised eigenspace 
that emerges from  Part \ref{sec:irregular} 
 is an indispensible
 ingredient in the extension of the ``elliptic Stark conjectures" of \cite{DLR1} to the 
 irregular setting that will be  presented in  \cite{DLR6}.

\medskip\medskip\noindent
{\small
 \thanks{{\bf Acknowledgements.} The first author was supported by an NSERC Discovery grant, and the
  third author was supported by Grant MTM2015-63829-P. The second author would like that thank
  Takeshi Saito of Tokyo University and Kenichi Bannai of Keio University for their hospitality. The three authors would like to thank the organizers of the Math Nisyros Conference in July 2017 for their hospitality before and after the workshop.
  This project has received funding from the European Research Council (ERC) under the European
Union's Horizon 
2020 research and innovation programme (grant agreement No 682152).
}}

\part{The regular setting} 
\label{sec:regular}

Let $\Lambda = \Z_p[[1+p\Z_p]]$ denote the    Iwasawa algebra, and  let 
$$ \cW := \Hom_{\cts}(1+p\Z_p,\C_p^\times) = \Hom_{\alg}(\Lambda, \C_p)$$ 
denote the associated {\em weight space}. For each $k\in \Z_p$, write
$\nu_k\in \cW$ for the 
``weight $k$"   homomorphism
   sending the group-like element $a\in 1+p\Z_p$ to 
$a^{k-1}$.  
The rule $\lambda(k) := \nu_k(\lambda)$ realises elements of $\Lambda$ as
analytic   functions on $\Z_p$. The spectrum 
$ \tilde \cW :=  \Hom_{\alg}(\tilde\Lambda, \C_p)$ of a 
 finite flat extension $\tilde\Lambda$ of $\Lambda$ 
is equipped with  a   ``weight map" 
$$ \w: \tilde\cW \lra \cW$$
of finite degree.
A $\Q_p$-valued point $x\in \tilde \cW$ is said to be {\em  of weight $k$} 
if $\w(x) = \nu_k$, and is said to be \'etale over $\cW$ if the inclusion 
$\Lambda\subset \tilde\Lambda$ induces an isomorphism between  
 $\Lambda$ and the completion of $\tilde\Lambda$ at the kernel 
 of $x$, denoted $\tilde\Lambda_x$.
An element of this
completion   thus gives rise to an analytic  function of $k\in \Z_p$ in a natural way.

A {\em Hida family}    is a 
formal $q$-series
$$  \hg := \sum a_n q^n \in \tilde\Lambda[[q]]$$
with coefficients in a finite flat extension $\tilde\Lambda$ of $\Lambda$, specialising to a classical 
ordinary eigenform of
weight $k$ at almost all  points $x$ of $\tilde \cW$ of weight $k\in \Z^{\ge 2}$.
Two Hida families $\hg_1 \in \tilde\Lambda_1[[q]]$ and $\hg_2\in \tilde\Lambda_2[[q]]$ 
are regarded as equal if the $\Lambda$-algebras 
$\tilde\Lambda_1$ and $\tilde\Lambda_2$ can
 be embedded in a common extension
 $\tilde\Lambda$ in such a way that $\hg_1$ and $\hg_2$ are identified.
 A well known theorem of Hida and Wiles asserts the existence of a Hida family specialising to $g_\alpha$ in weight one. 
 The following  uniqueness result for  this Hida family plays an important role in our study.
\begin{theorem*}
[Bellaiche, Dimitrov]
Assume that  the weight one form $g$ is regular at $p$, and let $x_\alpha$ and 
$x_\beta$ denote the distinct points on $\tilde\cW$ attached
to $g_\alpha$ and $g_\beta$  respectively.
Then
\begin{itemize}
\item[(a)] The curve $\tilde\cW$  is smooth at  $x_\alpha$ and $x_\beta$, and in particular 
 there are   unique Hida
families $\hg_\alpha, \hg_\beta \in \tilde \Lambda[[q]]$
specialising to $g_\alpha$  and $g_\beta$ at  $x_\alpha$ and $x_\beta$ respectively.
\item[(b)]
The weight map $\w:\tilde \cW \lra \cW$   is  furthermore 
 \'etale at $x_\alpha$  if any only if
$$
\quad \qquad (\dag) \quad \mbox{ $g$ is not the theta series of    a character of a real quadratic   $K$ in which  $p$
splits.} $$
\end{itemize}
\end{theorem*}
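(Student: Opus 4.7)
The plan is to translate the assertion into a tangent-space computation in Galois deformation theory. Under the regularity hypothesis, the Frobenius eigenvalues $\alpha \ne \beta$ pick out a unique $G_{\Q_p}$-stable line $L_\alpha \subset V_g$, and the completed local ring $\tilde\Lambda_{x_\alpha}$ pro-represents the functor of ordinary deformations of $\varrho_g$ preserving $L_\alpha$. Writing $T_\alpha := T_{x_\alpha}\tilde\cW$ and $T_\alpha^0 \subset T_\alpha$ for the kernel of the differential of the weight map, one obtains identifications
$$ T_\alpha \;\cong\; \Sel_\alpha, \qquad T_\alpha^0 \;\cong\; \Sel_\alpha^0 \;\subseteq\; \Sel_\alpha, $$
where $\Sel_\alpha \subseteq H^1(G_{\Q,Np}, \ad\varrho_g)$ consists of classes whose restriction to a decomposition group at $p$ preserves $L_\alpha$, and $\Sel_\alpha^0$ imposes the additional requirement that the induced class on the quotient $V_g/L_\alpha$ be unramified with trivial cyclotomic component (the ``constant-weight'' condition). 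Since the Hida-Wiles theorem provides a one-parameter family through $x_\alpha$, one has $\dim T_\alpha \ge 1$; hence (a) amounts to the bound $\dim \Sel_\alpha \le 1$, and (b) to the vanishing $\Sel_\alpha^0 = 0$.

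The second step is to compute $\dim_{\C_p} \Sel_\alpha^0$ via the Greenberg-Wiles / Poitou-Tate global Euler characteristic formula. The regularity $\alpha \ne \beta$ makes the local condition at $p$ transparent; the oddness of $\varrho_g$ controls the contribution at $\infty$; and the fact that $\ad\varrho_g$ factors through a finite quotient of $G_\Q$ handles the local contributions at primes dividing $N$. Decomposing $\ad\varrho_g = \mathbf{1} \oplus \ad^0 \varrho_g$ and bounding the $H^1$ of the trivial summand by class field theory, one reduces to bounding a specific piece of $H^1(G_{\Q,Np}, \ad^0\varrho_g)$.

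The crux is then to show that this piece vanishes except in the exceptional case where $g$ is the theta series of a character of a real quadratic field $K$ in which $p$ splits. A case analysis based on the image of the projective representation $\bar\varrho_g$ (exotic, imaginary dihedral, or real dihedral) handles all other situations via Dirichlet's unit theorem together with Leopoldt's conjecture for abelian extensions of $\Q$, both of which are classical. In the real-dihedral case, $\ad^0\varrho_g$ contains the quadratic character of $\Gal(K/\Q)$, and by Kummer theory the relevant $H^1$ is controlled by the $p$-unit group of $K$; the local condition at $p$ kills this contribution when $p$ is inert or ramified in $K$, and leaves a one-dimensional class (coming from the fundamental $p$-unit of $K$) exactly when $p$ splits. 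This delicate case-by-case analysis, and especially the careful accounting of local conditions in the split real-quadratic case, is the main technical obstacle; once it is carried out, both (a) and (b) follow, and the analogous conclusions for $x_\beta$ hold by symmetry.
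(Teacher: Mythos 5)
This theorem is quoted in the paper as an external result of Bella\"iche--Dimitrov and is never proved there, so there is no internal proof to measure you against; your outline does, however, follow the actual argument of \cite{BeDi}, and also the computations the present paper performs when it effectively re-derives pieces of the statement (Lemmas \ref{lemma:unit-one-dim}--\ref{lemma:bd-explicit} in the general regular case, and Lemma \ref{lemma} in the CM $p$-inert case). Three caveats. First, you assert that $\tilde\Lambda_{x_\alpha}$ \emph{pro-represents} the ordinary deformation functor; that is an $R=\mathbb{T}$ statement which is neither needed nor available at this stage. What is needed is only an injection of the tangent space of the eigencurve at $x_\alpha$ into the tangent space of the ordinary deformation functor, which together with the lower bound $\dim T_\alpha\ge 1$ coming from Hida--Wiles reduces (a) to the upper bound $\dim\Sel_\alpha\le 1$; the inequality runs in the right direction for your argument, but the justification should be the injection, not pro-representability. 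Second, the transcendence input you label ``Leopoldt's conjecture for abelian extensions of $\Q$'' is misnamed: the field $H$ cut out by $\ad^0\varrho_g$ is in general non-abelian over $\Q$ (e.g.\ in the exotic $A_4$, $S_4$, $A_5$ cases), and the tool actually required is the Baker--Brumer theorem on the $\bar\Q$-linear independence of $p$-adic logarithms of algebraic numbers --- precisely what the paper invokes in the proof of Lemma \ref{lemma:the-magic-A} to see that $\log_\wp$ is injective on $\cO_H^\times\otimes L$.

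Third, and most importantly, your write-up explicitly defers the crux (``once it is carried out, both (a) and (b) follow'') rather than executing it, so as it stands this is a plan and not a proof. The deferred analysis is exactly the content of the paper's Lemma \ref{lemma:unit-one-dim} (Dirichlet's unit theorem plus Frobenius reciprocity give $\dim_{\Q_p}(\cO_H^\times\otimes W_g)^G=1$, uniformly in the image of the projective representation) together with Lemmas \ref{lemma:the-magic-A} and \ref{lemma:bd-explicit}: the ordinarity and trace conditions cut out a line in $H_\wp\otimes W_g^{\ord}$, and the failure of \'etaleness is equivalent to the vanishing of $\log_\wp(u_{\alpha/\beta})$, which by Baker--Brumer forces $\cO_H^\times[W_g]$ to be concentrated in the $\chi_F$-isotypic component and fixed by $\tau_\wp$ --- i.e.\ forces $g$ to have RM by a real quadratic field in which $p$ splits. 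Importing that computation would complete your sketch.
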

The setting where $g$ is regular at $p$ but 
$\w$ is not \'etale  at $x_\alpha$ has   been treated in 
\cite{DLR3}, and the remainder of Part A
 will therefore focus on the scenarios where $(\dag)$  is  
satisfied. In that case, after viewing elements of the completion $\tilde\Lambda_{x_\alpha}$ of 
$\tilde \Lambda$ at $x_\alpha$    as analytic functions of
the ``weight variable" $k$, one may consider  the  {\em canonical} $q$-series 
$$ g_\alpha' := \left(\frac{d}{dk} \hg_\alpha\right)_{k=1}$$
representing the first-order infinitesimal ordinary deformation of $\hg$ at the weight one point $x_\alpha$, 
along this canonical
``weight direction".  The $q$-series $g_\alpha'$ is analogous to the overconvergent 
generalised eigenform considered in \cite{DLR3}, with the following  differences:
\begin{itemize}
\item[(a)] While the overconvergent generalised eigenform of \cite{DLR3} is a (non-classical, but 
overconvergent) modular form of weight one, such an interpretation is not available for the 
$q$-series $g_\alpha'$, which should rather be viewed as the first order term of a ``modular form of weight
$1+\varepsilon$". 
\item[(b)] In the non-\'etale setting of \cite{DLR3}, the absence
 of a natural local coordinate with respect to which the derivative would be computed meant 
 that the overconvergent generalised eigenform of loc.cit. could only be meaningfully defined 
 up to scaling by a non-zero multiplicative factor. This ambiguity is not present in the definition
of $g_\alpha'$, whose fourier coefficients are therefore entirely well-defined.
\end{itemize}
The main results of Part A give explicit formulae for these fourier coefficients: they
 are stated in Theorems 
 \ref{thm:main-general-regular},
  \ref{thm:main-cm-p-split},
    \ref{thm:CM-p-inert}, and 
    \ref{thm:rm-case} below.

\section{The general case}
\label{sec:exotic-regular}

The goal of this section is to describe  a general formula 
for  the fourier coefficients of $g_\alpha'$.

\medskip
The Artin representation $V_g$ can be realised as a two-dimensional $L$-vector space, where 
 $L$ is a finite extension of $\Q$,  contained in a cyclotomic field. 
Let    $W_g = \hom(V_g, V_g)$ denote the adjoint  equipped with its usual conjugation action of  $G_\Q$, denoted
$$ \sigma \cdot w :=  \varrho_g(\sigma) w \varrho_g(\sigma)^{-1}, \qquad
\sigma \in G_\Q, \quad w\in W_g.$$
Let $H\subset H_g$ denote the finite Galois extensions of $\Q$ 
cut out by the representations $W_g$ and  $V_g$ respectively, and write
 $G:=\Gal(H/\Q)$.

For notational simplicity, the following assumption is made in the rest of this paper:
\begin{assumption}
The prime   $p$ splits completely in  the field $L$ of coefficients of the Artin representation $V_g$.
\end{assumption}
This assumption
 amounts to a simple congruence condition on $p$. The choice of an embedding of $L$ 
into $\Q_p$, which is
fixed henceforth, will allow us, when it is convenient, to  view 
 $V_g$  and $W_g$ as representations of $G_\Q$ with 
 coefficients in $\Q_p$, 
 and the 
  weight one form $g$ as  a modular form with 
  fourier  coefficients in $\Q_p$ rather than in $L$.
  The $\Q_p$-vector spaces $V_g$ and $W_g$ are thus equipped with   natural $G_\Q$-stable $L$-rational structures, denoted $V_g^L$ and
  $W_g^L$ respectively.

 An embedding of $\bar\Q$ into $\bar\Q_p$ is fixed once and for all, 
determining  a  prime  $\wp$    of $H$ and of $H_g$  above $p$,
and an associated 
 frobenius element $\tau_\wp$ 
in $\Gal(H_g/\Q)$ and in $G$. Let 
 $G_\wp\subset G$ be the decomposition subgroup 
 generated by $\tau_\wp$.

 The representations $V_g$ and $W_g$ admit the following decompositions as $\tau_\wp$-modules:
 $$ V_g = V_g^\alpha \oplus V_g^\beta, \qquad W_g = W_g^{\alpha\alpha} \oplus  W_g^{\alpha\beta} \oplus W_g^{\beta\alpha} \oplus W_g^{\beta\beta},$$
 where $V_g^\alpha$ and $\V_g^\beta$ denote the $\alpha$ and $\beta$-eigenspaces for the action of $\tau_\wp$ on $V_g$, 
 and 
 $$W_g^{\xi \eta} := \hom(V_g^\xi, V_g^\eta), \qquad \mbox{ for } \xi, \eta\in \{\alpha,\beta\}$$
 is a $G_\wp$-stable  line, on which $\tau_\wp$ acts with eigenvalue
 $\eta/\xi$.  
 Let 
 $$ W_g^{\ord} := \hom(V_g/V_g^\alpha, V_g) = W_g^{\beta\alpha} \oplus W_g^{\beta\beta}.$$ 
Of course, $W_g^{\ord}$ is stable under the action of $G_\wp$ but not under the action of $G$. 

We propose to give a general formula for the $\ell$-th fourier coefficient of $g_\alpha'$ as the trace of a certain explicit endomorphism of $V_g$, which is constructed via a series of lemmas. In the lemma below, we let $G$ act on $\cO_H^\times \otimes W_g$ diagonally  on both factors in the tensor product.
\begin{lemma}
\label{lemma:unit-one-dim}
The $\Q_p$-vector space $(\cO_H^\times \otimes W_g)^G$ of $G$-invariant 
vectors is one-dimensional.
\end{lemma}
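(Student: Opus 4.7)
The plan is to reduce this to a character computation via Dirichlet's unit theorem, using the structure of $W_g$ as the adjoint of an odd two-dimensional irreducible Galois representation. Since all torsion in $\cO_H^\times$ vanishes after tensoring with the $\Q_p$-vector space $W_g$, I may replace $\cO_H^\times$ throughout by the Dirichlet module $\cO_H^\times \otimes_{\Z} \Q_p$.

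First I would identify $\cO_H^\times \otimes_{\Z} \Q_p$ as a $\Q_p[G]$-module. The Minkowski--Dirichlet logarithm map at the archimedean places of $H$ fits into an exact sequence
\[
0 \longrightarrow \cO_H^\times \otimes_{\Z} \Q_p \longrightarrow \Q_p[S_\infty] \xrightarrow{\,\Sigma\,} \Q_p \longrightarrow 0
\]
of $\Q_p[G]$-modules, where $S_\infty$ denotes the (transitive) $G$-set of archimedean places of $H$ and $\Sigma$ is the sum map. The sequence splits in characteristic zero, and $\Q_p[S_\infty] \simeq \Ind_{G_\infty}^{G} \mathbf{1}$, where $G_\infty \subset G$ is the decomposition subgroup at a chosen archimedean place. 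Combining this with Frobenius reciprocity gives
\[
\dim_{\Q_p} (\cO_H^\times \otimes W_g)^G \;=\; \dim_{\Q_p} W_g^{G_\infty} \;-\; \dim_{\Q_p} W_g^G.
\]

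To evaluate the two terms, the irreducibility of $V_g$ combined with Schur's lemma forces $W_g^G = \End_G(V_g)$ to be one-dimensional. On the other hand, because $\varrho_g$ is odd, complex conjugation acts on $V_g$ with eigenvalues $\{+1,-1\}$, hence on $W_g = \End(V_g)$ with eigenvalues $\{+1,+1,-1,-1\}$. In particular complex conjugation acts non-trivially on $W_g$, so the field $H$, being cut out by $W_g$, is totally complex; consequently $G_\infty$ has order exactly two and is generated by a $G$-conjugate of complex conjugation. Therefore $\dim_{\Q_p} W_g^{G_\infty} = 2$, and the difference $2 - 1 = 1$ gives the claim.

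The most delicate step is the identification of $G_\infty$ as generated by complex conjugation, i.e.\ the verification that every archimedean place of $H$ is complex; this is where the oddness of $\varrho_g$ enters essentially, and it is the only arithmetic input beyond the Dirichlet regulator structure. Once this is in place, the rest of the argument is pure representation-theoretic bookkeeping.
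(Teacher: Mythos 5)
Your proof is correct and follows essentially the same route as the paper's: Dirichlet's unit theorem realizing $\cO_H^\times\otimes\Q_p$ as $\Ind_{G_\infty}^G(\Q_p)-\Q_p$, Frobenius reciprocity, and the eigenvalues $\{+1,+1,-1,-1\}$ of complex conjugation on $W_g$ coming from oddness. The only cosmetic difference is that you compute $\dim W_g^{G_\infty}-\dim W_g^G=2-1$ using Schur's lemma, whereas the paper first splits $W_g=W_g^0\oplus\Q_p$, discards the trivial summand, and finds $\dim (W_g^0)^{c=1}=1$.
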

\begin{proof}
Let $G_\infty$ be the subgroup of $G$ generated by a complex conjugation 
$c$, which has order two, since $V_g$ is odd.
By Dirichlet's unit theorem, 
the global unit group $\cO_H^\times\otimes \Q_p$  is isomorphic to $\Ind_{G_\infty}^ G(\Q_p) - \Q_p $ 
 as a  $\Q_p[G]$-module. 
 Let $W_g^0$ denote the three-dimensional 
representation of $G$ consisting of trace zero endomorphisms of $V_g$. As a representation of 
$G$, we have $W_g = W_g^0 \oplus \Q_p$, and $W_g^0$ does not contain the 
trivial representation as a constituent.  
 By Frobenius reciprocity,
 $$ \dim_{\Q_p}((\cO_H^\times \otimes W_g)^G) = 
 \dim_{\Q_p}((\cO_H^\times \otimes W_g^0)^G)  = \dim_{\Q_p}((W_g^0)^{c=1})  = 1.$$
 The result follows.
 \end{proof}
 
 Assume that the field  $L$  of coefficients   is large
  enough so that the semisimple ring $L[G]$ becomes 
  isomorphic to a direct sum 
  of matrix algebras over $L$. 
 The $L[G]$-module $\cO_H^\times\otimes L$ 
 decomposes as a direct sum of $V$-isotypic components,
 $$ \cO_H^\times \otimes L = \oplus_{V} \cO_H^\times[V],$$
 where $V$ runs over the irreducible representations of $G$, 
 and $\cO_H^\times[V]$ denotes
  the largest subrepresentation of $\cO_H^\times\otimes L$ which is isomorphic to a direct sum of copies of $V$ as 
 an $L[G]$-module. For a general, not necessarily irreducible, representation $W$, the module $\cO_H^\times[W]$ is defined as the direct sum of the $\cO_H^\times[V]$ as $V$  ranges over the irreducible constituents of $W$.
 Because $W_g$ (viewed, for now, as a representation with coefficients in $L$) is self-dual, 
 Lemma \ref{lemma:unit-one-dim} can be recast as the assertion that $\cO_H^\times[W_g]$ 
 is isomorphic to  a single irreducible subrepresentation of $W_g$.
 More precisely:
 
 \medskip \noindent
 $\bullet$ 
  In the case of ``exotic weight one forms" where
   $\varrho_g$ has non-dihedral projective image (isomorphic to $A_4$, $S_4$ or $A_5$), then 
\begin{equation}
\label{eqn:units-exotic}
\cO_H^\times[W_g]  = \cO_H^\times[W_g^0] \simeq W_g^0,
\end{equation}
 and hence is  three-dimensional.
 
 \medskip\noindent
 $\bullet$
 If $\varrho_g$ is induced from a character $\psi_g$ 
of an imaginary quadratic field $K$, then 
$$W_g = L \oplus L(\chi_K) \oplus V_\psi, $$
where $\chi_K$ is the odd quadratic Dirichlet character associated to 
$K$ and $V_\psi$ is the two-dimensional representation of $G$
induced from the ring class character
 $\psi = \psi_g/\psi_g'$  which cuts out the abelian extension $H$
 of $K$. The representation $V_\psi$  is irreducible if and only if $\psi$ is non-quadratic, and 
 in that case, 
\begin{equation}
\label{eqn:units-cm}
\cO_H^\times[W_g] = \cO_H^\times[V_\psi]\simeq V_\psi.
\end{equation}
 In the special case where $ \psi$ is quadratic, the representation 
 $V_\psi$ further  decomposes as the direct sum of one-dimensional representations attached to an even and  an odd quadratic Dirichlet character,
 denoted  $\chi_1$ and $\chi_{2}$ respectively. 
That special case, in which $V_g$ is also induced from a character of 
the real quadratic field cut out by $\chi_1$,
 is thus subsumed under
   \eqref{eqn:units-rm} below.
 
 \medskip\noindent
 $\bullet$
 If $\varrho_g$ is induced from a character $\psi_g$ 
of a real quadratic field
$F$, then 
$$W_g = L \oplus L(\chi_F) \oplus V_\psi, \qquad V_\psi := \Ind_F^\Q(\psi), \quad \psi:= \psi_g/\psi_g',  $$
and one always has
\begin{equation}
\label{eqn:units-rm}
\cO_H^\times[W_g] = \cO_H^\times[\chi_F] \simeq L(\chi_F),
\end{equation} 
i.e., $\cO_H^\times[W_g]$ is generated by a fundamental unit of $F$.

\medskip
Let $U_g^\times$ be any generator of the one-dimensional $\Q_p$-vector space
$(\cO_H^\times\otimes W_g)^G$ and let 
\begin{equation}
\label{eqn:def-Ug} 
 U_g := (\log_\wp \otimes {\rm id})(U_g^\times) \in H_\wp\otimes W_g
 \end{equation}
be the image of this vector  under the linear map
$$  \log_\wp \otimes {\rm id}: \cO_H^\times \otimes W_g \lra H_\wp \otimes W_g,$$
where $\log_\wp$ is the  $p$-adic logarithm on the $\wp$-adic completion 
$H_\wp$ of $H$ at $\wp$.
\begin{lemma}
\label{lemma:the-magic-A}
There exists a non-zero endomorphism $A\in H_\wp \otimes
W_g$ satisfying the following conditions:
\begin{enumerate}
\item[(a)]
\label{cond:galois-A}
${\rm Trace}(A U_g) = 0$. 
\item [(b)]
\label{cond:ordinary-A}
$A$ belongs to $H_\wp\otimes W_g^{\rm ord}$, i.e., 
$A(V_g^\alpha) = 0$.
\end{enumerate}
This endomorphism is unique up to scaling.
\end{lemma}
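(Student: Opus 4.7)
The plan is to reformulate the two conditions as a rank statement for the trace pairing on $W_g = \End(V_g)$, deduce existence from a dimension count, and reduce uniqueness to the non-vanishing of a specific component of $U_g$. Since $W_g^{\ord} = W_g^{\beta\alpha} \oplus W_g^{\beta\beta}$ has $L$-dimension two, condition (b) confines $A$ to a two-dimensional $H_\wp$-subspace of $H_\wp \otimes W_g$. Condition (a) is one $H_\wp$-linear equation on this subspace, so its solution set has $H_\wp$-dimension at least one; in particular a non-zero $A$ exists. Uniqueness up to $H_\wp$-scaling is then equivalent to the assertion that the functional $A \mapsto \Tr(AU_g)$ does not vanish identically on $H_\wp \otimes W_g^{\ord}$.

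Next I would analyse the trace pairing $(A,B) \mapsto \Tr(AB)$ with respect to the $\tau_\wp$-eigenspace decomposition of $W_g$. A direct matrix computation in any $\tau_\wp$-eigenbasis of $V_g$ shows that for $A \in W_g^{\xi\eta}$ and $B \in W_g^{\xi'\eta'}$ the product $AB$ is supported on $\hom(V_g^{\xi'}, V_g^\eta)$ and vanishes unless $\eta' = \xi$, so $W_g^{\xi\eta}$ pairs non-trivially with $W_g^{\xi'\eta'}$ only when $(\xi',\eta') = (\eta,\xi)$. Consequently the annihilator of $W_g^{\ord}$ under the trace pairing is $W_g^{\alpha\alpha} \oplus W_g^{\beta\alpha}$, and condition (a) is non-trivial on $H_\wp \otimes W_g^{\ord}$ if and only if the projection of $U_g$ onto $H_\wp \otimes \bigl(W_g^{\alpha\beta} \oplus W_g^{\beta\beta}\bigr)$ is non-zero. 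Geometrically, this says precisely that $U_g$, viewed as an endomorphism of $V_g$, does not map the whole of $V_g$ into the unit-root line $V_g^\alpha$.

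I expect the main obstacle to be verifying this non-vanishing, which is the genuine content of the lemma beyond linear algebra. My plan is a case-by-case analysis using the explicit descriptions of $\cO_H^\times[W_g]$ recorded in \eqref{eqn:units-exotic}--\eqref{eqn:units-rm}: in each case $U_g^\times$ is a generator of a one-dimensional space built from a unit $u$ in an appropriate subfield of $H$ paired with a vector $v$ in the relevant irreducible $G$-subrepresentation of $W_g$. Expanding $v$ in a $\tau_\wp$-eigenbasis shows that its projection onto $W_g^{\alpha\beta} \oplus W_g^{\beta\beta}$ is non-zero, and the non-vanishing of $U_g$'s projection then reduces to the non-vanishing of $\log_\wp(u)$, which is standard for global units of infinite order. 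The exotic case uses the three-dimensional irreducible $W_g^0$, while the two CM cases reduce to the $\wp$-adic non-vanishing of explicit units in a quadratic subfield of $H$.
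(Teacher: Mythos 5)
Your linear-algebra reduction coincides with the paper's: condition (b) cuts out the two-dimensional space $H_\wp\otimes W_g^{\ord}$, condition (a) is a single linear functional on it, and the whole lemma rests on showing that this functional is not identically zero there, i.e.\ that the component of $U_g$ in $H_\wp\otimes(W_g^{\alpha\beta}\oplus W_g^{\beta\beta})$ is non-zero. Your annihilator computation for the trace pairing is correct and is just a coordinate-free form of the paper's explicit relation $\log_\wp(u_{\alpha/\beta})\,x-\log_\wp(u_1)\,y=0$; the paper then argues by contradiction from the three possible shapes \eqref{eqn:units-exotic}--\eqref{eqn:units-rm} of $\cO_H^\times[W_g]$, while you propose to verify the non-vanishing directly case by case. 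These are interchangeable, so the strategy is sound.

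Two points in your final step are, however, genuinely loose. First, the object whose projection onto $W_g^{\alpha\beta}\oplus W_g^{\beta\beta}$ must be controlled is not a single vector $v\in W_g$: the invariant $U_g^\times$ has the shape $\sum_\sigma{}^\sigma u\otimes(\sigma\cdot w)$, mixing all conjugates, and what you actually need is that the $\tau_\wp$-eigenvalue $\alpha/\beta$ or $1$ occurs in the $G$-subrepresentation $\cO_H^\times[W_g]$ sitting inside $W_g$. This depends on the decomposition behaviour of $p$ and not merely on ``$\mathrm{pr}(v)\neq0$'': in the RM case with $p$ split in $F$ only the eigenvalue $1$ occurs (so $u_{\alpha/\beta}=u_{\beta/\alpha}=0$ and one must fall back on $u_1$), while with $p$ inert only the eigenvalue $-1=\alpha/\beta$ occurs and $u_1=0$. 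Second, and more seriously, ``non-vanishing of $\log_\wp(u)$ for a global unit of infinite order'' is not the input you need. The elements $u_1$, $u_{\alpha/\beta}$ are $L$-linear combinations of conjugates of $u$ inside $\cO_H^\times\otimes L$ (e.g.\ $\sum_\sigma\psi^{-1}(\sigma)\,{}^\sigma u$ in the CM case), so their $\wp$-adic logarithms are non-trivial $\bar\Q$-linear combinations $\sum_\sigma c_\sigma\log_\wp({}^\sigma u)$, which could a priori vanish even when the element is non-zero in $\cO_H^\times\otimes L$. The implication ``non-zero in $\cO_H^\times\otimes L$ implies non-zero $\log_\wp$'' is the injectivity of $\log_\wp$ on $\cO_H^\times\otimes L$, which the paper derives from the linear independence over $\bar\Q$ of $p$-adic logarithms of algebraic numbers (the $p$-adic Baker--Brumer theorem). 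Without that transcendence input your argument does not close except in the RM case, where the relevant element really is a single fundamental unit.
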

\begin{proof}
The space $H_\wp \otimes W_g$ is four-dimensional over $H_\wp$ and 
the conditions in  Lemma
\ref{lemma:the-magic-A}  amount to three 
 linear conditions on $A$. More precisely,
choose a $\tau_\wp$-eigenbasis $(v_\alpha, v_\beta)$ for $V_g$ for which 
$$\tau_\wp  v_\alpha = \alpha  v_\alpha, \qquad \tau_\wp v_\beta = \beta  v_\beta.$$
Relative to this basis,
the endomorphism $U_g$ is represented by a matrix of the form
$$ U_g :  \left(\begin{array}{cc} 
\log_{\wp}(u_1) & \log_{\wp}(u_{\beta/\alpha}) \\
\log_{\wp}(u_{\alpha/\beta}) & - \log_{\wp}(u_1) 
\end{array} \right),$$
where $u_1, u_{\alpha/\beta}$, and $u_{\beta/\alpha}$ are generators  of 
$\cO_H^\times[W_g]$ which (when non-zero)
    are eigenvectors for $\tau_\wp$, satisfying
$$ \tau_\wp(u_1) = u_1, \qquad \tau_\wp(u_{\beta/\alpha}) = (\beta/\alpha) u_{\beta/\alpha},  \qquad  \tau_\wp(u_{\alpha/\beta}) = (\alpha/\beta) u_{\alpha/\beta}. $$
The endomorphism $A$  satisfies  condition (b)
above if and only if  the 
 matrix representing it in the basis $(v_\alpha, v_\beta)$
is of the form
$$ A : \left( \begin{array}{cc} 0 & x \\ 0 & y \end{array}\right), \qquad x,y  \in H_\wp,$$
and condition (a)  
 implies the further linear relation
\begin{equation}
\label{eqn:lin-rel-A}
  \log_{\wp}(u_{\alpha/\beta}) \cdot x -  \log_\wp(u_1) \cdot y = 0.
  \end{equation}
  The injectivity of $\log_\wp: \cO_H^\times\otimes L \lra H_\wp$,  which follows from the linear independence over $\bar\Q$ of logarithms of algebraic numbers,
  implies that  
the coefficients $\log_{\wp}(u_{\alpha/\beta}) $ and $\log_\wp(u_1)$
 in \eqref{eqn:lin-rel-A}  vanish simultaneously if and only if
  $$u_{\alpha/\beta} =  u_1 = 0$$
  in $\cO_H^\times\otimes L$, i.e.,    
 if and only if
$\cO_H^\times[W_g]$ is one-dimensional  over $L$ and generated by
$u_{\beta/\alpha}$. 
This  immediately rules out 
\eqref{eqn:units-exotic} and \eqref{eqn:units-cm} as scenarios
for the structure of $\cO_H^\times[W_g]$, leaving only
  \eqref{eqn:units-rm}. Hence, $V_g$ is induced from a character of 
a real quadratic field
$F$. In that case, the lines spanned by
$u_{\alpha/\beta}$ and $u_{\beta/\alpha}$ are interchanged
under the action of any reflection in $G$, and hence
the condition  $u_{\alpha/\beta} =0$ implies  that
$u_{\beta/\alpha} =0$ as well, thus forcing the vanishing of the full 
  $\cO_H^\times[W_g]$. This contradiction 
  to Lemma \ref{lemma:unit-one-dim}
 shows that  \eqref{eqn:lin-rel-A} imposes a non-trivial linear condition
on $x$ and $y$, and therefore that $A$ is unique up 
to scaling.
  \end{proof}

\begin{lemma}
\label{lemma:bd-explicit}
Let $A$ be any element of $H_\wp\otimes W_g$ satisfying the conditions in Lemma
\ref{lemma:the-magic-A}.
Then the following   are equivalent:
\begin{itemize}
\item[(a)]
${\rm Trace}(A) \ne 0$;
\item[(b)] 
The representation $\varrho_g$ is not induced from a character of a real quadratic field in which the prime $p$ splits.
\end{itemize}
\end{lemma}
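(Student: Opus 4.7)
The plan is to re-use the explicit matrix representation of $A$ derived in the proof of Lemma~\ref{lemma:the-magic-A} and to translate the vanishing of $\mathrm{Trace}(A)$ into a statement about $\tau_\wp$-eigenspaces of $\cO_H^\times[W_g]$. In the basis $(v_\alpha,v_\beta)$ used there, $A$ has the shape $\smallmat{0}{x}{0}{y}$, so $\mathrm{Trace}(A) = y$, and the pair $(x,y)$ is, up to common scaling, the unique non-zero solution of the constraint $\log_\wp(u_{\alpha/\beta})\,x - \log_\wp(u_1)\,y = 0$ from \eqref{eqn:lin-rel-A}. The injectivity of $\log_\wp$ on $\cO_H^\times \otimes L$, as already used in the previous lemma, then shows that $\mathrm{Trace}(A) = 0$ if and only if $u_1 \ne 0$ and $u_{\alpha/\beta} = 0$ in $\cO_H^\times \otimes L$.

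The proof thus reduces to checking that this conjunction occurs precisely in the RM-split scenario, which I would do by running through the three possibilities \eqref{eqn:units-exotic}, \eqref{eqn:units-cm}, \eqref{eqn:units-rm} for $\cO_H^\times[W_g]$ and computing the $\tau_\wp$-spectrum in each. In the exotic case, $W_g^0$ carries the three distinct $\tau_\wp$-eigenvalues $\{1, \alpha/\beta, \beta/\alpha\}$ each with a non-zero eigenvector, so in particular $u_{\alpha/\beta} \ne 0$. In the imaginary quadratic case with $\psi$ non-quadratic, the $\tau_\wp$-eigenvalues on $V_\psi$ are $\{\alpha/\beta, \beta/\alpha\}$ when $p$ splits in $K$, forcing $u_1 = 0$ since $\alpha/\beta \ne 1$ by regularity; when $p$ is inert in $K$, the identity $\psi((p)) = 1$ (which holds because the ideal $(p)$ is stable under the non-trivial automorphism of $K$), combined with $\alpha/\beta = -1$, gives eigenvalues $\pm 1$, and hence $u_{\alpha/\beta} \ne 0$.

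In the real quadratic case, $\cO_H^\times[W_g] \simeq L(\chi_F)$ is one-dimensional with $\tau_\wp$ acting as multiplication by $\chi_F(p) \in \{\pm 1\}$. If $p$ splits in $F$, then $\chi_F(p) = 1$, so a fundamental unit of $F$ generates $u_1$ while $u_{\alpha/\beta} = 0$ (using regularity to exclude $\alpha/\beta = 1$): this is precisely scenario (b). If $p$ is inert, then $a_p = 0$ forces $\alpha/\beta = -1 = \chi_F(p)$, so the same unit generates $u_{\alpha/\beta}$ while $u_1 = 0$.

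The main obstacle is carrying out the case analysis uniformly, and in particular handling the CM-inert subcase. The key observation that makes it fit is that since the rational prime $p$ is preserved by the non-trivial automorphism of $K$, the ring class character $\psi = \psi_g/\psi_g'$ takes the value $1$ on the $\cO_K$-ideal $(p)$; thus whenever $1$ appears as a $\tau_\wp$-eigenvalue on $V_\psi$, so does $-1 = \alpha/\beta$, and a non-zero $u_1$ automatically forces a non-zero $u_{\alpha/\beta}$. The biconditional then falls out of the case analysis.
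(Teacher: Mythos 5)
Your argument is correct and follows essentially the same route as the paper's: both translate $\mathrm{Trace}(A)=0$, via the matrix form of $A$ and the relation \eqref{eqn:lin-rel-A} from the proof of Lemma \ref{lemma:the-magic-A}, into the vanishing of the $\alpha/\beta$-eigencomponent of $\cO_H^\times[W_g]$, and then invoke the trichotomy \eqref{eqn:units-exotic}--\eqref{eqn:units-rm} (the paper more tersely, by deducing that $\cO_H^\times[W_g]$ must then be one-dimensional and $\tau_\wp$-fixed, whereas you compute the $\tau_\wp$-spectrum in each case). One small repair in your CM-inert subcase: stability of $(p)$ under conjugation only yields $\psi((p))^2=1$; the equality $\psi((p))=1$ that your argument needs holds because $(p)$ is generated by a rational integer and is therefore trivial in the ring class group through which $\psi$ factors — equivalently, $\tau_\wp$ is an involution in the generalised dihedral group $G$, as the paper notes in Section 2.2.
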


\begin{proof}
The vanishing of ${\rm Trace}(A)$ is equivalent to the vanishing of the entry
$y$ in \eqref{eqn:lin-rel-A}, and hence to the vanishing of
$\log_{\wp}(u_{\alpha/\beta})$, and therefore of $u_{\alpha/\beta}$ and
$u_{\beta/\alpha}$ as well. This implies that
$\cO_H^\times[W_g]$ is one-dimensional and generated by
$u_1$. As in the proof of Lemma \ref{lemma:the-magic-A},
this rules out \eqref{eqn:units-exotic} and 
\eqref{eqn:units-cm},   leaving  only \eqref{eqn:units-rm} as a 
possibility, i.e., $V_g$ is necessarily induced from a character of a real
quadratic field $F$.
Furthermore, $\tau_\wp$ fixes the group $\cO_H^\times[W_g]$ generated by the fundamental
unit of $F$, which  occurs precisely when
$p$ splits in $F$. The lemma follows.
\end{proof}

Assume from now on  that the equivalent conditions of Lemma 
\ref{lemma:bd-explicit} hold.  One can then define
 $A_g\in H_\wp \otimes W_g$ to be the unique $H_\wp^\times$- multiple of $A$ satisfying
$$ {\rm Trace}(A_g)=1.$$

As in lemma \ref{lemma:unit-one-dim}, $H_\wp \otimes W_g$ is endowed with the diagonal action of $G_\wp$ which acts on both
$H_\wp$ and on $W_g$ in a natural way. Given $A\in H_\wp\otimes W_g$ and $\sigma\in G_\wp$, let us write 
 $ ^{\sigma} A$  for the image of $A$ by the action of $\sigma$  on the first factor $H_\wp$, 
and   $\sigma \cdot A_g$ for the image of $A$ by the action of $\sigma$ by conjugation
 on the second
factor $W_g$. 
\begin{lemma}
\label{lemma:invariant-Gp}
The endomorphism $A_g$ belongs to the space
$(H_\wp \otimes W_g)^{G_\wp}$ of $G_\wp$-invariants for
the diagonal action of $G_\wp$ on $H_\wp \otimes W_g$,  i.e.,
$$  ^{\tau_\wp} A_g = \tau_\wp^{-1} \cdot A_g.$$ 
\end{lemma}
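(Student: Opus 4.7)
The plan is to invoke the uniqueness portion of Lemma \ref{lemma:the-magic-A} (and the normalization $\mathrm{Trace}(A_g)=1$) by showing that the endomorphism
$$B := \tau_\wp \cdot ({}^{\tau_\wp} A_g) \in H_\wp\otimes W_g$$
satisfies the conditions (a), (b) of that lemma together with $\mathrm{Trace}(B)=1$. Since $A_g$ is the unique element satisfying all three requirements, we must then have $B=A_g$, which is precisely the asserted identity ${}^{\tau_\wp}A_g = \tau_\wp^{-1}\cdot A_g$; because $G_\wp$ is (topologically) generated by $\tau_\wp$, this is the desired $G_\wp$-invariance for the diagonal action.

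The crucial preliminary step is to establish the analogous identity ${}^{\tau_\wp} U_g = \tau_\wp^{-1}\cdot U_g$ for the vector $U_g$. This follows by unpacking the $G$-invariance of $U_g^\times\in (\cO_H^\times\otimes W_g)^G$: writing $U_g^\times = \sum_j u_j\otimes w_j$, the invariance relation $\sum_j(\tau_\wp u_j)\otimes w_j = \sum_j u_j\otimes (\tau_\wp^{-1}\cdot w_j)$ in $\cO_H^\times\otimes W_g$ can then be pushed forward by $\log_\wp\otimes\mathrm{id}$, using that $\log_\wp\colon \cO_H^\times\to H_\wp$ is equivariant with respect to the $G_\wp$-action on $H_\wp$ arising from the extension to $\Gal(H_\wp/\Q_p)$.

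Granting this, the three checks on $B$ are short. For condition (b): since $V_g^\alpha$ is a $\tau_\wp$-stable line, $\varrho_g(\tau_\wp)^{-1}$ preserves $V_g^\alpha$, so for $v\in V_g^\alpha$ one has $B(v)= \alpha^{-1}\varrho_g(\tau_\wp)\bigl((\tau_\wp\otimes 1)(A_g(v))\bigr)$, which vanishes because $A_g(v)=0$. For the normalization: the action of $\tau_\wp$ by conjugation on $W_g$ preserves trace, so $\mathrm{Trace}(B)=\tau_\wp(\mathrm{Trace}(A_g))=\tau_\wp(1)=1$. For condition (a): using $U_g=\tau_\wp\cdot({}^{\tau_\wp}U_g)$ from the preliminary step and the fact that both $\tau_\wp\cdot(-)$ on $W_g$ and ${}^{\tau_\wp}(-)$ on $H_\wp$ are ring homomorphisms, one obtains
$$B\cdot U_g \;=\; \bigl(\tau_\wp\cdot{}^{\tau_\wp}A_g\bigr)\bigl(\tau_\wp\cdot{}^{\tau_\wp}U_g\bigr) \;=\; \tau_\wp\cdot {}^{\tau_\wp}(A_g\cdot U_g),$$
and taking traces gives $\mathrm{Trace}(B\cdot U_g)=\tau_\wp(\mathrm{Trace}(A_g\cdot U_g))=\tau_\wp(0)=0$. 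Finally $B\neq 0$ because both $\tau_\wp\cdot(-)$ and ${}^{\tau_\wp}(-)$ are bijections.

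The only step requiring any thought is the $G_\wp$-equivariance of $U_g$, and in particular the compatibility of the global $G$-action on $\cO_H^\times$ with the local $\tau_\wp$-action on $H_\wp$ via $\log_\wp$; everything else is bookkeeping separating the ``scalar'' Galois action on the $H_\wp$-factor from the conjugation action on the $W_g$-factor, together with the elementary observation that conjugation preserves traces and preserves the kernel of the $\tau_\wp$-stable subspace $V_g^\alpha$.
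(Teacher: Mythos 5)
Your proof is correct, but it takes a genuinely different route from the paper's. The paper argues by direct computation: in the $\tau_\wp$-eigenbasis $(v_\alpha,v_\beta)$ used in the proof of Lemma \ref{lemma:the-magic-A}, the endomorphism $A_g$ is represented by the explicit matrix $\bigl(\begin{smallmatrix} 0 & \log_\wp(u_1)/\log_\wp(u_{\alpha/\beta}) \\ 0 & 1\end{smallmatrix}\bigr)$, and one simply observes that conjugation by $\varrho_g(\tau_\wp)^{-1}$ scales the upper-right entry by $\beta/\alpha$ while fixing the diagonal, which is exactly what the Galois action $^{\tau_\wp}(-)$ on the $H_\wp$-coefficients does, since $u_1$ is $\tau_\wp$-fixed and $u_{\alpha/\beta}$ is scaled by $\alpha/\beta$. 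You instead argue structurally: the twisted element $B = \tau_\wp\cdot({}^{\tau_\wp}A_g)$ satisfies conditions (a) and (b) of Lemma \ref{lemma:the-magic-A} together with $\mathrm{Trace}(B)=1$, because each defining condition is equivariant for the diagonal $G_\wp$-action (the key input being $^{\tau_\wp}U_g = \tau_\wp^{-1}\cdot U_g$, which you correctly extract from the $G$-invariance of $U_g^\times$ and the compatibility of $\log_\wp$ with the decomposition group), so uniqueness forces $B=A_g$. Both arguments ultimately rest on the same eigenvector structure of the units $u_1, u_{\alpha/\beta}, u_{\beta/\alpha}$ — in the paper's proof it appears directly in the matrix entries, in yours it is packaged into the invariance of $U_g^\times$. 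The paper's version is shorter given that the explicit matrix was already in hand; yours is basis-free, makes clear that the invariance is forced by the normalization rather than by any accident of the entries, and would survive in settings where one does not want to solve the linear system explicitly. One small point of care in your condition (b) check: the identity $({}^{\tau_\wp}A_g)(v) = (\tau_\wp\otimes 1)(A_g(v))$ is valid precisely because $v$ ranges over the $\Q_p$-rational space $V_g^\alpha$ rather than over $H_\wp\otimes V_g$; you use it correctly, but it is worth flagging that this is where the rationality of $V_g^\alpha$ enters.
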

\begin{proof}
Relative to the $\Q_p$-basis for $V_g$ 
used in the proof of Lemma \ref{lemma:the-magic-A},
the endomorphism $A_g$ is represented by a matrix of the form
$$ \left( \begin{array}{cc} 
0 & \frac{\log_{\wp}(u_1)}{\log_{\wp}(u_{\alpha/\beta})} \\
0 & 1 
\end{array} \right).$$
The lemma follows immediately from this in light of the fact that conjugation 
by $\varrho_g(\tau_\wp)$ preserves the diagonal entries in such a matrix representation while multiplying its upper right hand entry by $\alpha/\beta$, whereas $\tau_\wp$ acts on the upper right-hand entry of the 
above matrix as multiplication by $\beta/\alpha$.
\end{proof}

The matrix $A_g$ gives rise to  a  
 $G$-equivariant homomorphism 
 $\Phi_g: H^\times \lra  H_\wp\otimes W_g$
 by setting
\begin{equation}
\label{eqn:def-Phi-g}
 \Phi_g(x) =   \sum_{\sigma \in G} \log_{\wp}(^\sigma x) \cdot (\sigma^{-1} \cdot A_g),
 \end{equation}
 where, just as above, the group $G$ acts on $H_\wp\otimes W_g$ trivially on the first factor and through the usual conjugation action induced by $\rho_g$ on the second factor.  
  \begin{lemma}
\label{lemma:about-image-Phig}
The homomorphism $\Phi_g$ takes values in $W_g$. 
\end{lemma}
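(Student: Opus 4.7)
The plan is to prove that $\Phi_g(x)$ is invariant under the action of $G_\wp$ on the first tensor factor of $H_\wp\otimes W_g$. Once this is established, local Galois theory identifies the corresponding invariant subspace $(H_\wp)^{G_\wp}\otimes W_g$ with $\Q_p\otimes W_g = W_g$, using that $H_\wp/\Q_p$ is Galois with group $G_\wp$; this is the desired conclusion.

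To carry this out, I would exploit that $G_\wp = \langle\tau_\wp\rangle$ is cyclic, so it suffices to verify the single identity ${}^{\tau_\wp}\Phi_g(x) = \Phi_g(x)$. Applying ${}^{\tau_\wp}$ to the first factor in the definition \eqref{eqn:def-Phi-g} -- and observing that the first-factor Galois action on $H_\wp$ commutes with the second-factor conjugation action on $W_g$ -- gives
\[
{}^{\tau_\wp}\Phi_g(x) \;=\; \sum_{\sigma\in G}\log_\wp\bigl({}^{\tau_\wp\sigma}x\bigr)\cdot \bigl(\sigma^{-1}\cdot {}^{\tau_\wp}A_g\bigr).
\]
At this point Lemma~\ref{lemma:invariant-Gp} intervenes decisively: the relation ${}^{\tau_\wp}A_g = \tau_\wp^{-1}\cdot A_g$ lets one absorb the extra $\tau_\wp$ into the conjugation action, converting $\sigma^{-1}\cdot {}^{\tau_\wp}A_g$ into $(\tau_\wp\sigma)^{-1}\cdot A_g$. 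Substituting $\tau := \tau_\wp\sigma$ and reindexing the resulting sum over $G$ then recovers $\Phi_g(x)$ exactly.

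The main subtlety I would flag carefully in the write-up is keeping the two different $G$-actions on $H_\wp\otimes W_g$ straight: the Galois action on $H_\wp$ (denoted ${}^\sigma(\cdot)$) and the conjugation action on $W_g$ through $\varrho_g$ (denoted $\sigma\cdot(\cdot)$). The identity supplied by Lemma~\ref{lemma:invariant-Gp} precisely controls the mismatch between these two actions on $A_g$, and this is what makes the reindexing trick work. Beyond this bookkeeping, the verification is essentially a two-line computation, so I do not anticipate any further serious obstacle.
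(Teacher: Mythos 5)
Your proposal is correct and follows the paper's own argument: both show ${}^{\tau_\wp}\Phi_g(x)=\Phi_g(x)$ by applying $\tau_\wp$ to the first factor, invoking Lemma~\ref{lemma:invariant-Gp} to convert $\sigma^{-1}\cdot{}^{\tau_\wp}A_g$ into $(\tau_\wp\sigma)^{-1}\cdot A_g$, and reindexing the sum over $G$. Your explicit final step, identifying $(H_\wp)^{G_\wp}\otimes W_g$ with $\Q_p\otimes W_g=W_g$, is left implicit in the paper but is exactly the intended conclusion.
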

\begin{proof}
For any $x\in (H\otimes \Q_p)^\times$ we have
\begin{eqnarray*}
 ^{\tau_\wp}\Phi_g(x)  &=& \sum_{\sigma \in G} \log_{\wp}(^{\tau_\wp\sigma} \!x) \cdot (\sigma^{-1} \cdot ^{\tau_\wp}\!\!A_g) \\
 &=& \sum_{\sigma \in G} \log_{\wp}(^{\tau_\wp\sigma}\! x) \cdot (\sigma^{-1} \cdot  {\tau_\wp}^{-1} \cdot A_g)  \\
 &=& \sum_{\sigma \in G} \log_{\wp}(^{\tau_\wp\sigma} \!x) \cdot ((\tau_\wp\sigma)^{-1} \cdot   A_g)  \\
 &=& \Phi_g(x),
 \end{eqnarray*}
 where Lemma \ref{lemma:invariant-Gp} has been used to derive the second equation.
 \end{proof}
 
  By a slight abuse of notation, we shall continue to denote with the same symbol the homomorphism 
 $$
 \Phi_g: (H\otimes \Q_p)^\times  \lra H_\wp\otimes W_g
 $$
obtained from \eqref{eqn:def-Phi-g} by extending scalars. Note that $H_\wp^\times$ embeds naturally in $(H\otimes \Q_p)^\times$.
 
  \begin{lemma}
  \label{lemma:cft-Phi-g}
The homomorphism $\Phi_g$ vanishes on $\cO_H^\times \otimes \Q_p$ and 
 $\Phi_g(H_\wp^\times) \subseteq H_\wp\otimes W_g^{\ord}$.
\end{lemma}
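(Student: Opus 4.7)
\noindent\textit{Plan of proof.} I would handle the two assertions separately. For the vanishing on $\cO_H^\times \otimes \Q_p$, the idea is to test $\Phi_g(u)$ against an arbitrary element $B \in W_g$ via the trace form ${\rm Trace}(B_1 B_2)$, which is non-degenerate on $W_g = \End(V_g)$. Since Lemma \ref{lemma:about-image-Phig} already places $\Phi_g(u)$ inside $W_g$, showing that ${\rm Trace}(B \cdot \Phi_g(u))=0$ for every $B$ will force $\Phi_g(u)=0$. Using the adjoint invariance of the trace, I would push the $\sigma^{-1}$-action off $A_g$ and onto $B$ to obtain
$$ {\rm Trace}(B \cdot \Phi_g(u)) \;=\; {\rm Trace}\!\left( \Big( \sum_{\sigma \in G} \log_\wp(^\sigma u)\,(\sigma \cdot B) \Big) A_g \right). $$
The element $\sum_\sigma (^\sigma u)\otimes(\sigma \cdot B) \in \cO_H^\times\otimes W_g$ is manifestly $G$-invariant, so by Lemma \ref{lemma:unit-one-dim} it is a scalar multiple of $U_g^\times$; applying $\log_\wp\otimes {\rm id}$, the bracketed sum becomes a scalar multiple of $U_g$. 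The orthogonality relation ${\rm Trace}(A_g U_g)=0$ from condition (a) of Lemma \ref{lemma:the-magic-A} then delivers the vanishing.

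For the inclusion $\Phi_g(H_\wp^\times) \subseteq H_\wp\otimes W_g^{\ord}$, I would first note that under the embedding $H_\wp^\times \hookrightarrow (H\otimes\Q_p)^\times = \prod_{\wp'\mid p} H_{\wp'}^\times$, an element $x$ has $\wp'$-component equal to $1$ for all $\wp'\neq \wp$. Since Galois permutes the completions, the $\wp$-component of $^\sigma x$ equals $1$ unless $\sigma\wp = \wp$, i.e.\ unless $\sigma\in G_\wp$; hence $\log_\wp(^\sigma x)$ vanishes off $G_\wp$, and the sum defining $\Phi_g(x)$ collapses to
$$ \Phi_g(x) \;=\; \sum_{\sigma\in G_\wp} \log_\wp(^\sigma x)\,(\sigma^{-1}\cdot A_g). $$
The key move is then to rewrite Lemma \ref{lemma:invariant-Gp} in the equivalent form $\sigma^{-1}\cdot A_g = {^\sigma} A_g$ for $\sigma\in G_\wp$, which trades the $W_g$-side action (which does not preserve $W_g^{\ord}$) for the $H_\wp$-side action (which does). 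Since $\log_\wp$ commutes with the $G_\wp$-action on $H_\wp$, the right-hand side becomes
$$ \Phi_g(x) \;=\; \sum_{\sigma\in G_\wp} {^\sigma}\!\bigl( \log_\wp(x)\,A_g \bigr), $$
which is the Galois trace along $H_\wp/\Q_p$ of an element of $H_\wp\otimes W_g^{\ord}$ (by condition (b) of Lemma \ref{lemma:the-magic-A}), and therefore lies in $H_\wp\otimes W_g^{\ord}$ (indeed already in $W_g^{\ord}$ itself).

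The first part is where the structural inputs conspire: the dimension count of Lemma \ref{lemma:unit-one-dim}, the orthogonality built into the normalization of $A_g$, and the non-degeneracy of the trace form on $W_g$ all combine to yield the vanishing. The main pitfall I foresee is not conflating the two commuting $G$-actions on $H_\wp\otimes W_g$---the Galois action on the first factor versus the adjoint action on the second---and I expect that careful bookkeeping of these actions, both in the argument above and when invoking Lemma \ref{lemma:invariant-Gp} in the second part, will be the only genuinely delicate point.
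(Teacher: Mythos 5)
Your proof is correct and the first half coincides exactly with the paper's argument: test $\Phi_g(u)$ against an arbitrary $B\in W_g$, recognize $\sum_\sigma({}^\sigma u)\otimes(\sigma\cdot B)$ as a multiple of $U_g^\times$ via Lemma \ref{lemma:unit-one-dim}, invoke ${\rm Trace}(A_gU_g)=0$, and conclude by non-degeneracy of the trace form; your second half fleshes out what the paper dispatches in one line by citing property (b) of Lemma \ref{lemma:the-magic-A}. One small correction: since $W_g^{\ord}$ is stable under the adjoint action of $G_\wp$ (as the paper notes explicitly), the terms $\sigma^{-1}\cdot A_g$ for $\sigma\in G_\wp$ already lie in $H_\wp\otimes W_g^{\ord}$ once you have observed that the sum collapses to $G_\wp$, so the detour through Lemma \ref{lemma:invariant-Gp} --- though it yields the stronger conclusion that the image lies in $W_g^{\ord}$ itself, consistent with Lemma \ref{lemma:about-image-Phig} --- is not needed for the stated containment, and your parenthetical claim that the adjoint action fails to preserve $W_g^{\ord}$ is inaccurate for elements of $G_\wp$.
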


\begin{proof}
Picking $u\in \cO_H^\times$ and an arbitrary $B\in W_g$, set
$$U_g^\times := \sum_{\sigma\in G}  \phantom{x}^\sigma u \otimes (\sigma\cdot B) \in 
   (\cO_H^\times \otimes W_g)^G, \qquad U_g:= (\log_{\wp}\otimes {\rm id})(U_g) $$
 as in the statement of Lemma \ref{lemma:the-magic-A}. Note that $U_g^\times$ is either trivial or a generator of the one-dimensional space $(\cO_H^\times \otimes W_g)^G$.
We have
\begin{eqnarray*}
 {\rm Trace}(\Phi_g(u) \cdot B) &=&
{\rm Trace}\left( \left(\sum_{\sigma\in G} \log_\wp (^\sigma u) \cdot (\sigma^{-1} \cdot A_g)\right)\cdot B\right)
  \\ 
  &=& {\rm Trace}\left( A_g\cdot\left(\sum_{\sigma\in G} \log_\wp (^\sigma u) \cdot (\sigma \cdot B)\right) \right) \\
  &=& {\rm Trace}\left( A_g \cdot (\log_\wp\otimes {\rm Id}) (U_g^\times)\right)  = {\rm Trace}\left(A_g \cdot U_g\right).
   \end{eqnarray*}
 It follows from Property (a) satisfied by $A$ (and hence $A_g$ in particular)
   in Lemma \ref{lemma:the-magic-A}  that
   $$  {\rm Trace}\left( \Phi_g(u)\cdot B\right)  = 0, \qquad \mbox{ for all } B\in H_\wp\otimes W_g.$$
  The first assertion in the lemma   follows from the non-degeneracy of the $H_\wp$-valued
   trace pairing on $H_\wp\otimes W_g$.
   The second  assertion follows from Property (b) satisfied by  $A$ and  by $A_g$
 in
Lemma \ref{lemma:the-magic-A}.

  \end{proof}

  Let now $\ell\nmid Np$ be a rational prime, and let $\lambda$ be a prime of 
$H$ above $\ell$. Let $u(\lambda) \in \cO_H[1/\lambda]^\times \otimes \Q$ be a $\lambda$-unit of $H$  satisfying
\begin{equation}
\label{eqn:pin-down-u-lambda}
{\rm Norm}^H_\Q (u(\lambda)) = \ell.
\end{equation}
This condition makes $u(\lambda)$ well-defined up to the addition of 
elements in $\cO_H^\times\otimes \Q$, and hence the 
element
$$A_g(\lambda) := \Phi_g(u(\lambda)) =
\sum_{\sigma \in G} \log_{\wp}(^\sigma u(\lambda)) \cdot (\sigma^{-1} \cdot A_g)
$$ 
is
well-defined, by Lemma \ref{lemma:cft-Phi-g}. 
Although $A_g(\lambda)$ only belongs to  $H_\wp \otimes W_g$ a priori, we have:
\begin{lemma}
\label{lemma:about-Ag-lambda}
The trace of the endomorphism $A_g(\lambda)$ is equal to $\log_p(\ell)$. 
\end{lemma}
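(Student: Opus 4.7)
The plan is to reduce the computation of $\mathrm{Trace}(A_g(\lambda))$ to a norm calculation via the $G$-equivariance of the trace pairing. First, I would observe that for every $\sigma \in G$, the endomorphism $\sigma^{-1} \cdot A_g = \varrho_g(\sigma)^{-1} A_g \varrho_g(\sigma)$ is obtained from $A_g$ by conjugation on the second factor of $H_\wp \otimes W_g$, and hence
$$
{\rm Trace}(\sigma^{-1} \cdot A_g) \;=\; {\rm Trace}(A_g) \;=\; 1,
$$
the last equality being the normalisation by which $A_g$ was defined from the endomorphism $A$ of Lemma \ref{lemma:the-magic-A}.

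Expanding the definition \eqref{eqn:def-Phi-g} of $\Phi_g$ and using the $H_\wp$-linearity of the trace, it follows that
$$
{\rm Trace}(A_g(\lambda)) \;=\; \sum_{\sigma \in G} \log_{\wp}({}^\sigma u(\lambda)) \cdot {\rm Trace}(\sigma^{-1} \cdot A_g) \;=\; \sum_{\sigma \in G} \log_{\wp}({}^\sigma u(\lambda)).
$$
At this point the key step is to recognise the right-hand side as the $\wp$-adic logarithm of a norm. Because $\log_{\wp}$ is a homomorphism from $(H \otimes \Q_p)^\times$ to $H_\wp$ (extended to $\lambda$-units in the obvious way) and the fixed embedding $\bar\Q \hookrightarrow \bar\Q_p$ realises each Galois conjugate ${}^\sigma u(\lambda)$ inside $H_\wp$, the sum collapses to
$$
\sum_{\sigma \in G} \log_{\wp}({}^\sigma u(\lambda)) \;=\; \log_{\wp}\!\left( \prod_{\sigma \in G} {}^\sigma u(\lambda) \right) \;=\; \log_{\wp}\!\left( {\rm Norm}^H_\Q(u(\lambda)) \right) \;=\; \log_{\wp}(\ell).
$$
Since $\ell \in \Q \subset \Q_p \subset H_\wp$, the $\wp$-adic logarithm of $\ell$ agrees with the Iwasawa $p$-adic logarithm $\log_p(\ell)$, which delivers the claimed formula.

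The only subtlety I would need to dispatch is that $u(\lambda)$ is only pinned down by \eqref{eqn:pin-down-u-lambda} up to multiplication by an element of $\cO_H^\times \otimes \Q$; but this is exactly the ambiguity absorbed by the first assertion of Lemma \ref{lemma:cft-Phi-g}, which guarantees $\Phi_g$ kills global units, so the displayed computation is independent of the choice and $A_g(\lambda)$ is well-defined. Once the bookkeeping with the embeddings of ${}^\sigma u(\lambda)$ into $H_\wp$ is in place, the proof is a short three-line manipulation; no analytic or cohomological input beyond the normalisation ${\rm Trace}(A_g)=1$ and the multiplicativity of the logarithm is needed.
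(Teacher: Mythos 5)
Your argument coincides with the paper's own proof: both use that conjugation preserves the trace so that ${\rm Trace}(\sigma^{-1}\cdot A_g)=1$ for all $\sigma$, then collapse the resulting sum $\sum_{\sigma}\log_\wp({}^\sigma u(\lambda))$ to $\log_\wp({\rm Norm}^H_\Q(u(\lambda)))=\log_p(\ell)$ via the normalisation \eqref{eqn:pin-down-u-lambda}. The closing remark on the independence of the choice of $u(\lambda)$, guaranteed by Lemma \ref{lemma:cft-Phi-g}, matches the observation the paper makes just before stating the lemma.
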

\begin{proof}
Since the trace of $A_g$ and its conjugates are all equal to $1$, we have
 \begin{eqnarray*}
 {\rm Trace}(A_g(\lambda))   &=& \sum_{\sigma \in G} \log_{\wp}( ^{\sigma} u(\lambda)) \cdot {\rm Trace}(\sigma^{-1}\cdot A_g) \\
&=& \sum_{\sigma \in G} \log_{\wp}(^{\sigma} u(\lambda)) \\
&=& \log_\wp\left({\rm Norm}^H_{\Q}(u(\lambda))\right).
\end{eqnarray*}
The latter expression is equal to $\log_p(\ell)$, by
 \eqref{eqn:pin-down-u-lambda}.
\end{proof}

 \begin{remark}
 Although $A_g(\lambda)$ belongs to $W_g$ by Lemma \ref{lemma:about-image-Phig}, 
the  entries of the matrix representing  
  $A_g(\lambda)$ relative to an $L$-basis 
 for $V_g^L$  are $L$-linear combinations of products
of $\wp$-adic logarithms of units and $\ell$-units in $H$, and in particular
$A_g(\lambda)$ need not lie in $W_g^L$. (In fact, it never does, since its trace is not
algebraic.)
\end{remark}

In addition to the invariant $A_g(\lambda)$, the choice of the prime $\lambda$ of $H$ above $\ell$ also 
 determines a well-defined
 Frobenius element 
$\tau_\lambda$ in $G=\Gal(H/\Q)$, and even in   
 $\Gal(H_g/\Q)$, since $\Gal(H_g/H)$ lies in the center of this group.
 
 We are now ready to state the main theorem of this section:
 \begin{theorem}
 \label{thm:main-general-regular}
 For all rational primes $\ell \nmid Np$,
 $$ a_\ell(g_\alpha') = {\rm Trace}( \varrho_g(\tau_\lambda) A_g(\lambda)).$$
 \end{theorem}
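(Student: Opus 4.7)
The plan is to identify the Fourier coefficient $a_\ell(g_\alpha')$ with the trace of a first-order Galois-theoretic deformation of $\varrho_g$, evaluated at the Frobenius $\tau_\lambda$, and then to match that deformation explicitly with $A_g(\lambda)$ via class field theory. The étaleness hypothesis $(\dag)$ identifies the completion $\tilde\Lambda_{x_\alpha}$ with a power-series ring in $(k-1)$, so reduction modulo $(k-1)^2$ realises an infinitesimal deformation of $g_\alpha$ over the dual numbers. Passing to Galois representations, the family $\varrho_{\hg_\alpha}$ yields a cocycle $\kappa \in Z^1(G_\Q, W_g)$, well-defined modulo coboundaries, determined by
$$
\varrho_{\hg_\alpha} \equiv \bigl(1 + (k-1)\kappa\bigr)\varrho_g \pmod{(k-1)^2}.
$$
The Eichler--Shimura relation $a_\ell(\hg_\alpha) = {\rm Trace}(\varrho_{\hg_\alpha}(\tau_\lambda))$ for $\ell \nmid Np$, differentiated at $k = 1$, yields
$$
a_\ell(g_\alpha') = {\rm Trace}\bigl(\varrho_g(\tau_\lambda)\cdot \kappa(\tau_\lambda)\bigr),
$$
so the theorem reduces to the identity $\kappa(\tau_\lambda) = A_g(\lambda)$ in $H_\wp \otimes W_g$.

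Next I would record three local properties of $\kappa$ coming from the structure of the Hida family: it is unramified outside $Np$ (the tame level is fixed along the family); its restriction to $G_{\Q_p}$ takes values in $W_g^{\ord}$ (the unit-root filtration is preserved); and it satisfies ${\rm Trace}(\kappa(\sigma)) = \log_p\chi_{\cyc}(\sigma)$ (from the relation $\det \varrho_{\hg_\alpha} = \chi\cdot\chi_{\cyc}^{k-1}$). Evaluating the last condition at $\sigma = \tau_\lambda$ gives $\log_p(\ell)$, matching the trace computed in Lemma \ref{lemma:about-Ag-lambda} and thus fixing the normalisation ${\rm Trace}(A_g) = 1$. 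By the uniqueness half of the Bellaïche--Dimitrov theorem, these three conditions determine the class of $\kappa$ modulo coboundaries.

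I would then identify $\kappa$ with the cocycle constructed from $A_g$ using class field theory. Since $G$ is finite and $W_g$ has $\Q_p$-coefficients, $H^1(G, W_g) = 0$, and inflation--restriction embeds $H^1(G_\Q, W_g)$ into $\Hom_G(G_H^{\rm ab}, W_g)$. Artin reciprocity then transports this into a space of $G$-equivariant homomorphisms on a suitable quotient of $(H\otimes\Q_p)^\times$. Lemmas \ref{lemma:about-image-Phig} and \ref{lemma:cft-Phi-g} show that $\Phi_g$ takes values in $W_g$, kills the global units $\cO_H^\times\otimes\Q_p$, and carries $H_\wp^\times$ into $W_g^{\ord}$ -- exactly the three conditions characterising $\kappa$ -- so that the cohomology class of $\Phi_g$ equals that of $\kappa$. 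Finally, under Artin reciprocity the Frobenius $\tau_\lambda$ corresponds to a local uniformizer at $\lambda$, represented (modulo global units) by $u(\lambda)$ through the normalisation ${\rm Norm}^H_\Q(u(\lambda)) = \ell$; vanishing of $\Phi_g$ on units then guarantees that the value
$$
\kappa(\tau_\lambda) = \Phi_g(u(\lambda)) = A_g(\lambda)
$$
is independent of the representative, completing the proof.

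The main obstacle lies in this last identification of the abstract cocycle $\kappa$ with the explicit class defined by $\Phi_g$; equivalently, in showing that the local-at-$\wp$ component of $\kappa$ coincides with $A_g$. The construction of $A_g$ in Lemma \ref{lemma:the-magic-A} has been engineered precisely for this purpose: conditions $(a)$ and $(b)$ there are the shadows of the orthogonality-to-global-units condition and the ordinarity condition satisfied by $\kappa$, while the uniqueness of $A_g$ up to scaling mirrors the Bellaïche--Dimitrov one-dimensionality of the space of first-order ordinary deformations. Carrying out this reciprocity-law comparison cleanly -- keeping track of the interplay between the decomposition group at $\wp$ and the global $G$-action -- is the technical heart of the argument.
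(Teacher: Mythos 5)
Your proposal is correct and follows essentially the same route as the paper's proof: expressing $a_\ell(g_\alpha')$ as ${\rm Trace}(\varrho_g(\tau_\lambda)\kappa(\tau_\lambda))$ for the cocycle $\kappa$ attached to the first-order deformation, identifying $\kappa$ with $\Phi_g$ inside the one-dimensional ordinary subspace of $H^1(\Q,W_g)$ via inflation--restriction and class field theory, and pinning down the scalar by matching traces against $\log_p(\ell)$. The only cosmetic difference is that you phrase the normalisation step through the determinant condition on the whole cocycle rather than comparing traces at $\tau_\lambda$ directly, which amounts to the same computation.
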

 \begin{remark}
 This invariant does not depend on the choice of a prime $\lambda$ of $H$ above $\ell$, since replacing $\lambda$ by another such prime has the effect of conjugating the endomorphisms $\varrho_g(\tau_\lambda)$ 
 and $A_g(\lambda)$ by the same element of $\Aut(V_g)$.
\end{remark}
 \begin{proof}[Proof of Theorem \ref{thm:main-general-regular}]
 Let  $\Q[\varepsilon]$ denote the ring of dual numbers over $\Q_p$,
 with $\varepsilon^2 = 0$, and let 
 $$\tilde\varrho_g: G_\Q \lra \GL_2(\Q_p[\varepsilon])$$ 
be the unique first order $\alpha$-ordinary  deformation of $\varrho_g$ satisfying
$$ \det\tilde\varrho_g = \chi_g (1+  \log_p \chi_{\cyc} \cdot \varepsilon).$$
 This representation  may be written as
\begin{equation}
\label{eqn:introducing-kappa}
 \tilde\varrho_{g} = (1+   \varepsilon \cdot \kappa_g)  \cdot \varrho_g \quad \mbox{ for some  } \quad \kappa_{g}: G_{\Q} \lra  W_g.
\end{equation}
 The multiplicativity of  $\tilde \varrho_{g}$ 
    implies that the function $\kappa_{g}$ is a
   $1$-cocycle on $G_{\Q}$ with values in $W_g$, whose class in $H^1(\Q,W_g)$
   (which shall be  denoted with the same symbol, by a slight abuse of notation) 
    depends only
   on the isomorphism class of $\tilde \varrho_{g}$. 
   Furthermore,  
   $$ a_\ell(g_\alpha) + \varepsilon \cdot a_\ell(g_\alpha') = {\rm Trace}(\tilde\varrho_g(\tau_\lambda)) = a_\ell(g) + \varepsilon \cdot {\rm Trace}(\kappa_g(\tau_\lambda) \varrho_g(\tau_\lambda)),$$
   and hence
 \begin{equation}
 a_\ell(g_\alpha') = {\rm Trace}(\varrho_g(\tau_\lambda) \kappa_g(\tau_\lambda)).
 \end{equation}
 To make $\kappa_g(\tau_\lambda)$ explicit,  observe that
  the inflation-restriction sequence combined with global class field theory for $H$ gives rise to a series of identifications
  \begin{eqnarray} 
\nonumber
 H^1(\Q,W_g)  & \stackrel{\res_H}{\lra} &  \hom(G_H, W_g)^G \\
 \nonumber
 &=& \hom_G\left( \frac{(\cO_H\otimes \Q_p)^\times}{\cO_H^\times\otimes\Q_p}, W_g\right). 
 \end{eqnarray}
Under this identification, the class $\kappa_g$  can be viewed as an 
element of the space
$$H^1_{\rm ord}(\Q,W_g) = \left\{ \Phi\in  \hom_G\left( \frac{(\cO_H\otimes \Q_p)^\times}{\cO_H^\times\otimes\Q_p}, W_g\right) \mbox{ such that } 
\Phi(H_{\wp}^\times)  \subset W_g^{\rm ord} \right\}.$$
But the homomorphism $\Phi_g$ of 
\eqref{eqn:def-Phi-g}   belongs to the same one-dimensional space, by
 Lemma  \ref{lemma:about-image-Phig} and  \ref{lemma:cft-Phi-g}.
By global class field theory,
 the endomorphism $\kappa_g(\tau_\lambda)$ is therefore 
 a  $\Q_p^\times$-multiple
  of $\Phi_g(u_g(\lambda)) = A_g(\lambda)$. 
The fact that these endomorphisms
 are actually equal now follows by comparing their
   traces and noting that
$$ {\rm Trace}(\kappa_g(\tau_\lambda)) = \log_p \chi_{\cyc}(\ell) = \log_p(\ell),
$$
while 
$$   {\rm Trace}(A_g(\lambda)) =  \log_p(\ell),$$
by Lemma \ref{lemma:about-Ag-lambda}.
Theorem \ref{thm:main-general-regular} follows.
 \end{proof}
 
 \begin{corollary}
 \label{cor:main-general-regular}
 If the rational prime $\ell\nmid Np$ splits completely in $H/\Q$, then 
 $$a_\ell(g_\alpha') = 
(1/2) \cdot a_\ell(g) \cdot \log_p(\ell).$$
 \end{corollary}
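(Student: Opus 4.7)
The plan is to substitute the hypothesis that $\ell$ splits completely in $H$ directly into the formula of Theorem \ref{thm:main-general-regular} and observe that the matrix $\varrho_g(\tau_\lambda)$ becomes a scalar, so that it factors out of the trace.

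First, recall that $H$ is the field cut out by the adjoint representation $W_g$, while $H_g$ is cut out by $\varrho_g$ itself. The hypothesis that $\ell$ splits completely in $H/\Q$ means that any Frobenius $\tau_\lambda$ attached to a prime $\lambda$ of $H$ above $\ell$ lies in $\Gal(H_g/H)$, which acts trivially by conjugation on $W_g$. Equivalently, $\varrho_g(\tau_\lambda)$ lies in the centralizer of the image of $\varrho_g$ in $\GL_2(L)$; since $\varrho_g$ is irreducible, Schur's lemma forces $\varrho_g(\tau_\lambda) = \zeta \cdot \mathrm{Id}$ for some scalar $\zeta \in L^\times$ (which is in fact a root of unity).

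Next, I would apply Theorem \ref{thm:main-general-regular} and pull this scalar out of the trace:
\begin{equation*}
a_\ell(g_\alpha') \;=\; \mathrm{Trace}\bigl(\varrho_g(\tau_\lambda)\, A_g(\lambda)\bigr) \;=\; \zeta \cdot \mathrm{Trace}\bigl(A_g(\lambda)\bigr).
\end{equation*}
By Lemma \ref{lemma:about-Ag-lambda}, the right-hand trace equals $\log_p(\ell)$. On the other hand, $a_\ell(g) = \mathrm{Trace}(\varrho_g(\tau_\lambda)) = 2\zeta$, so $\zeta = \tfrac{1}{2} a_\ell(g)$, which yields $a_\ell(g_\alpha') = \tfrac{1}{2}\,a_\ell(g) \log_p(\ell)$, as claimed.

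There is essentially no obstacle here beyond the bookkeeping of distinguishing the splitting behavior in $H$ versus $H_g$: the content is entirely concentrated in Theorem \ref{thm:main-general-regular} and Lemma \ref{lemma:about-Ag-lambda}, and the corollary is the clean specialization obtained when $\varrho_g(\tau_\lambda)$ acts as a scalar.
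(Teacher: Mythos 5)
Your proposal is correct and follows the paper's own argument exactly: the hypothesis makes $\varrho_g(\tau_\lambda)$ scalar, equal to $\tfrac{1}{2}a_\ell(g)$, and the trace formula of Theorem \ref{thm:main-general-regular} combined with Lemma \ref{lemma:about-Ag-lambda} gives the result. The only difference is that you spell out the Schur's lemma justification for why the matrix is scalar, which the paper leaves implicit.
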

 \begin{proof}
 The hypothesis implies that $\varrho_g(\tau_\lambda)$ is a scalar, and
 hence that $\varrho_g(\tau_\lambda) = \frac{1}{2}a_\ell(g)$.
  It follows that
    $${\rm Trace}(\varrho_g(\tau_\lambda) A_g(\lambda)) = (1/2)\cdot a_\ell(g) \cdot {\rm Trace}(A_g(\lambda)) =(1/2)\cdot  a_\ell(g) \cdot \log_p(\ell).$$ 
    The corollary now follows from Theorem \ref{thm:main-general-regular}.
    \end{proof}

 \begin{example}
Let $\chi$ be a Dirichlet character of conductor $171$ with order $3$ at $9$ and $2$ at $19$.
Then $S_1(171,\chi)$ is a $\Q(\chi)$-vector space of dimension $2$. It is spanned by
an eigenform
\[ g = q + \zeta q^2 + \zeta^3 q^3 - \zeta^2 q^5 + (\zeta^2 - 1) q^6 + \cdots
\]
defined over $L := \Q(\zeta)$, with $\zeta$ a primitive $12$th root of unity, and its
Galois conjugate. (See \cite{BL} for all weight one eigenforms of level at most $1500$.)
The associated projective representation $\varrho_g$ has
$A_4$-image and factors through the field
\[ H = \Q(a),\, a^4 + 10 a^3 + 45 a^2 + 81 a + 81 = 0.\]
Let $p = 13$, which splits completely in $L$. The representation $\varrho_g$ is regular at $13$, with
eigenvalues $\alpha = \zeta$ and $\beta = - \zeta^3$. We computed the first order deformations through each of $g_\alpha$
and $g_\beta$ to precision $13^{10}$, and $q$-adic precision $q^{37,000}$, using methods based upon the algorithms
in \cite{lauder-alg}. 

The predictions made from Theorem  \ref{thm:main-general-regular} for $a_\ell(g_\alpha')$ depend upon the
conjugacy class of the Frobenius at $\ell$ in $\Gal(H/\Q)$. For all
 primes $\ell < 37,000$ which split  completely
in $H$, such as  $\ell = 109, 179, 449, 467, 521, \ldots$, we verified that  
\[   a_\ell(g_\alpha') = (1/2) \cdot a_\ell(g) \cdot \log_{13}(\ell)  \pmod{13^{10}},\]
as asserted by Corollary \ref{cor:main-general-regular}.
\end{example}

\section{CM forms}
\label{sec:cm-regular}
This section focuses on the case 
where  
$g = \theta_\psig$ is the CM theta series attached to 
a character 
$$\psig: G_K \lra L^\times$$ 
of a quadratic
imaginary  field $K$. The main theorems are
Theorems 
  \ref{thm:main-cm-p-split} and
    \ref{thm:CM-p-inert} below, which will be derived
     in two independent ways,
    both ``from first principles" and by specialising Theorem 
    \ref{thm:main-general-regular}.

 As in the previous section, the choice
 of an embedding of $L$ 
into $\Q_p$  allows us to view $\psi_g$ as a $\Q_p^\times$-valued character, and the 
weight one form $g$ as a modular form with coefficients in $\Q_p$. 

For a character $\psi:G_K\lra L^\times$, the notation $\psi'$ will be used to designate the composition of $\psi$ with conjugation by  the non-trivial element   in
$\Gal(K/\Q)$:
$$ \psi'(\sigma) = \psi(\tau\sigma\tau^{-1}),$$
where $\tau$ is any element of $G_\Q$ which acts non-trivially on $K$.

The Artin representation $\varrho_g$ is induced from $\psi_g$ and its restriction to $G_K$ is the direct 
sum $\psi_g \oplus \psi_g'$ of two characters of $K$, which are {\em distinct} by the irreducibility of
$\varrho_g$ resulting from the fact that $g$ is a cusp form.
In this case, the field
 $H$ is the ring class field of $K$ which is 
 cut out by  the non-trivial ring class character $\psim := \psig/\psig'$. 
The Galois group $G:= \Gal(H/\Q)$ is a generalised dihedral group containing 
$ Z := \Gal(H/K)$ as its abelian normal subgroup of index two.

The case   of CM forms can 
 be further subdivided into two sub-cases, depending on whether $p$ 
  is split or inert  in $K$.

\subsection{The case where $p$ splits in $K$}
Write $p \cO_K = \fp  \fp'$, and fix a prime $\wp$  of $\bar\Q$ above $\fp$. 
The roots of the $p$-th Hecke polynomial of $g$ are 
$$\alpha=\psi_g(\fp), \qquad \beta=\psi_g(\fp').$$
 This case  is notable in that the  Hida family $\hg$ passing through $g_\alpha$ can be written down explicitly 
as a family of theta series.
Its weight $k$ specialisation $\hg_k$  is the theta-series attached to the character $\psig \Psi^{k-1}$, 
where $\Psi$ is a CM Hecke character  of weight $(1,0)$ which is unramified at $\fp$. 
For all rational primes $\ell\nmid Np$, the  $\ell$-th fourier coefficient  of $\hg_k$ is given by
\begin{equation}
\label{eqn:theta-coeff-fam}
 a_\ell(\hg_k) = \left\{\begin{array}{ll} 
\psig(\lambda) \Psi^{k-1}(\lambda') + \psig(\lambda') \Psi^{k-1}(\lambda) & \mbox{ if } \ell = \lambda\lambda' \mbox{ splits in $K$};\\
0 & \mbox{ if $\ell$ is inert in $K$}. 
\end{array}\right.
\end{equation}
Letting $h$ be the class number of $K$ and  $t$  the cardinality of the unit group  $\cO_K^\times$, the character $\Psi$ satisfies
$$ \Psi(\lambda)^{ht} =  u_\lambda^t,  \qquad \mbox{ where } (u_\lambda) := \lambda^h,$$
for any prime ideal $\lambda$ of $\cO_K$ whose norm is the rational prime $\ell = \lambda\lambda'$.
Let $u_\lambda'$ denote the conjugate of $u_\lambda$ in $K/\Q$.
It follows  that     
\begin{eqnarray*}
 \frac{d}{dk} \Psi^{k-1}(\lambda)_{k=1} &=&   \log_{\fp}(u(\lambda)), \qquad \mbox{ where } u(\lambda) := u_\lambda \otimes \frac{1}{h} \in \cO_H[1/\ell]^\times \otimes \Q, 
 \end{eqnarray*}
 and likewise that
 \begin{eqnarray*}
 \frac{d}{dk} \Psi^{k-1}(\lambda')_{k=1} &=&   \log_{\fp}(u(\lambda)'), \qquad \mbox{ where } u(\lambda)':= u_\lambda' \otimes \frac{1}{h} \in \cO_H[1/\ell]^\times \otimes \Q.
 \end{eqnarray*} 
In light of \eqref{eqn:theta-coeff-fam}, we have obtained:
  \begin{theorem}
  \label{thm:main-cm-p-split}
For all rational primes $\ell$ that do not divide $Np$,
 \begin{equation}
\label{eqn:theta-coeff-fam-bis}
 a_\ell(g_\alpha') = \left\{\begin{array}{ll} 
\left(\psig(\lambda) 
\log_{\fp}(u(\lambda'))
 + \psig(\lambda')  
 \log_{\fp}(u(\lambda)) \right)
  & \mbox{ if } \ell = \lambda\lambda' \mbox{ splits in $K$};\\
0 & \mbox{ if $\ell$ is inert in $K$}. 
\end{array}\right.
\end{equation}
 \end{theorem}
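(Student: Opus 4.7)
The plan is to exploit the fact that when $p$ splits in $K$, the unique Hida family $\hg$ passing through $g_\alpha$ admits a completely explicit description as a family of CM theta series, so that the first-order deformation can be extracted by direct termwise differentiation of a $q$-expansion formula, with no appeal to the general machinery of Section~\ref{sec:exotic-regular} necessary. The theorem should therefore be provable essentially ``from first principles'', with the Bellaiche--Dimitrov uniqueness statement playing the role of ensuring that the theta family one writes down is really the correct one.

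First I would produce a $p$-adic Hecke character $\Psi$ of $K$ of infinity type $(1,0)$ which is unramified at $\fp$, normalized so that $\Psi(\lambda)^{ht} = u_\lambda^t$ for every prime $\lambda\nmid Np$ of $\cO_K$, with $(u_\lambda) = \lambda^h$. Such a $\Psi$ exists and is unique up to a finite order twist, essentially because principality of $\lambda^h$ pins down $\Psi(\lambda)$ up to a unit and because $\Psi$ is trivial on the (finite) image of $\cO_K^\times$. I would then verify that the family $\hg_k := \theta_{\psi_g\Psi^{k-1}}$ has constant tame level and nebentypus, is ordinary at $p$ with $U_p$-eigenvalue a deformation of $\alpha = \psi_g(\fp)$, and specialises to $g_\alpha$ at $k=1$; by part (a) of the Bellaiche--Dimitrov theorem it must then coincide with $\hg_\alpha$. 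Next, the Fourier expansion of a theta series of $K$ is a sum over integral ideals, so the $\ell$-th coefficient of $\hg_k$ is exactly \eqref{eqn:theta-coeff-fam}: a sum of two terms when $\ell=\lambda\lambda'$ splits in $K$, and $0$ when $\ell$ is inert. The inert case of the theorem is then immediate. For the split case I would differentiate termwise at $k=1$ and use the key identity
\[ \tfrac{d}{dk}\Psi^{k-1}(\lambda)\big|_{k=1} = \log_{\fp}(u(\lambda)), \qquad u(\lambda) = u_\lambda \otimes \tfrac{1}{h}, \]
which follows by applying $\log_{\fp}$ to the relation $\Psi(\lambda)^{ht}=u_\lambda^t$ and dividing by $ht$. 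Substituting into the derivative of \eqref{eqn:theta-coeff-fam} yields the stated formula.

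The main obstacle, and the only genuinely delicate point, is fixing $\Psi$ canonically enough that the constants in the derivative formula come out correctly and unambiguously; in particular, one needs the twist-by-finite-order ambiguity in $\Psi$ to be killed by the requirement that $\hg_k$ specialises to $g_\alpha$ (not merely to some ordinary $p$-stabilisation of a twist of $g$) at $k=1$, and one needs to be careful that the rational-power conventions implicit in writing $u_\lambda \otimes 1/h$ match the normalization of $\log_{\fp}$. As an independent check I would re-derive the same formula by specialising Theorem~\ref{thm:main-general-regular}: in the CM split case the space $\cO_H^\times[W_g]$ is the $V_\psi$-isotypic component, a $\tau_\wp$-eigenbasis of $V_g$ is furnished by the characters $\psi_g$ and $\psi_g'$ of $G_K$, and the eigenvectors $u_{\alpha/\beta}$, $u_{\beta/\alpha}$ are explicit $\fp$-unit and $\fp'$-unit combinations in $H$. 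Computing $\Tr(\varrho_g(\tau_\lambda)A_g(\lambda))$ directly in this dihedral basis should reproduce the two logarithms $\log_{\fp}(u(\lambda))$ and $\log_{\fp}(u(\lambda'))$, weighted precisely by $\psi_g(\lambda')$ and $\psi_g(\lambda)$, thereby confirming the answer.
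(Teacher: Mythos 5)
Your proposal is correct and follows essentially the same route as the paper: the paper also writes the Hida family through $g_\alpha$ explicitly as the theta family $\hg_k = \theta_{\psi_g\Psi^{k-1}}$, uses the normalisation $\Psi(\lambda)^{ht}=u_\lambda^t$ to get $\tfrac{d}{dk}\Psi^{k-1}(\lambda)\big|_{k=1}=\log_{\fp}(u(\lambda))$, and differentiates \eqref{eqn:theta-coeff-fam} termwise at $k=1$. The cross-check you suggest via Theorem \ref{thm:main-general-regular} is likewise carried out in the paper immediately after the theorem, with the same matrices $U_g$, $A_g$ and $A_g(\lambda)$ in a dihedral eigenbasis.
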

 Thus, the prime fourier coefficients of $g_\alpha'$ are supported at the primes $\ell$ which are split in $K$,
 where they  are (algebraic multiples of)  the $\fp$-adic logarithms of $\ell$-units in this quadratic field.
  This general pattern will persist in the other settings to be described below, with the notable feature that the  fourier coefficients of $g_\alpha'$ will be more complicated 
  expressions involving, in general,  the $p$-adic logarithms of units and $\ell$-units 
 in the full  ring class field $H$.
 
The reader will note
 Theorem   \ref{thm:main-cm-p-split} is consistent with 
 Theorem  \ref{thm:main-general-regular}, and could also have been deduced from it.
 More precisely,   choose a basis of $V_g$ consisting of eigenvectors 
 for the action of $G_K$ (and hence also,  of $\tau_\wp$)  
  which are interchanged by some element $\tau\in G_\Q - G_K$. Relative to such a basis,
    the endomorphisms 
  $U_g$ and $A_g$ are represented by the following matrices, in which $u_\psi$ and $\tau u_\psi$ are
  generators of  the spaces of $\psi$ and $\psi^{-1}$-isotypic vectors in the group of elliptic units in 
  $\cO_H^\times\otimes L$:
  $$ U_g: \left(\begin{array}{cc}
  0 & \log_{\wp}(u_\psi) \\ 
  \log_{\wp}(u_\psi') & 0 
  \end{array}\right), \qquad
  A_g :  \left(\begin{array}{cc} 
  0 & 0 \\ 
  0 & 1 
  \end{array}\right).   $$
It follows that, if $\ell = \lambda\lambda'$ is split in $K$, then $A_g(\lambda)$ is represented by the matrix
  $$ A_g(\lambda) :   \left(\begin{array}{cc}
   \log_{\wp}(u(\lambda')) & 0  \\ 
  0 & \log_{\wp}(u(\lambda))  
  \end{array}\right), 
 $$
 while $A_g(\lambda) = \frac{1}{2} \log_p(\ell)$ is the scalar matrix with
  trace equal to $\log_p(\ell)$ if $\ell$ is inert in $K$.

 \subsection{The case where $p$ is inert in $K$}
We now turn to  the more interesting case where $p$ is inert in $K$.
Let $\sigma_\wp:= \tau_\wp^2$ 
denote the frobenius element in $G_K$ 
attached to the prime $\wp$ of $H$ (which is well-defined 
modulo the inertia subgroup at $\wp$). 
Note that
 the prime $p$ splits completely
in $H/K$, since the
 image of $\tau_{\wp}$ in $G$ is a reflection in this generalised dihedral group. 
 The
 image of $\sigma_\wp$  in $\Gal(H_g/K)$ therefore 
 belongs to the subgroup $\Gal(H_g/H)$ whose image under $\varrho_g$ 
 consists of scalar matrices. Similar notations and remarks apply to any rational prime $\ell$ which is inert
 in $K/\Q$.

Relative to an eigenbasis $(e_1,e_2)$  for the action of $G_K$ on $V_g$, the Galois representation $\varrho_g$ 
takes the form
\begin{equation}
\label{eqn:def-psi-g}
 \varrho_{g}(\sigma) = \left(\begin{array}{cc} 
\psi_g(\sigma) & 0 \\ 
0 & \psi_g'(\sigma) \end{array}\right) \ \ \mbox{for } \sigma\in G_K.
\end{equation}
The homomorphisms $\psi_g, \psi_g':  G_K\lra \Q_p^\times$  factor through
$\Gal(H_g/K)$ and satisfy
$$ \psi_g(\tau \sigma\tau^{-1}) = \psi_g'(\sigma), \quad \mbox{ for all } \tau \in G_\Q - G_K, \ \ \ \sigma\in G_K.$$ 
It follows that $\varrho_g(\tau)$ interchanges the lines spanned by
$e_1$ and $e_2$, for any element $\tau\in G_\Q-G_K$.
The restriction of $\varrho_g$ to $G_\Q - G_K$ can therefore be described in matrix form by 
 \begin{equation}
 \label{eqn:def-eta-g}
  \varrho_{g}(\tau) = \left(\begin{array}{cc} 
0 & \eta_g(\tau)   \\
  \eta_g'(\tau) & 0 \end{array}\right) \ \ \mbox{for } \tau \in G_\Q - G_K, 
\end{equation}
  where $\eta_g$ and $\eta_g'$ are $L$-valued functions  on $G_\Q - G_K$ 
  that satisfy 
  \begin{equation}
  \label{eqn:eta-tau1-tau2}
  \eta_g(\tau_1)\eta_g'(\tau_2) = \psi_g(\tau_1 \tau_2) = 
 \psi_g'(\tau_2\tau_1), \qquad \mbox{ for all } \tau_1,\tau_2\in G_\Q-G_K,
  \end{equation}
  as well as the relations
  \begin{equation}
  \label{eqn:eta-sigma-tau}
  \begin{array}{ll}
  \eta_g(\sigma\tau)=  \psi_g(\sigma) \eta_g(\tau),  &  \quad 
  \eta_g(\tau\sigma) = \psi_g'(\sigma) \eta_g(\tau),  \\
  \eta_g'(\sigma\tau) =   \psi_g'(\sigma) \eta_g'(\tau), &   \quad 
  \eta_g'(\tau\sigma) =   \psi_g(\sigma) \eta_g'(\tau),   
  \end{array}
     \qquad \mbox{ for all } \sigma\in G_K, \ \ \tau\in G_\Q-G_K.
     \end{equation}
  After re-scaling $e_1$ and $e_2$ if necessary, we may assume that 
  $\tau_\wp\in G_\Q - G_K$  is sent to the matrix
\begin{equation}
\label{frp-inert}   
 \varrho_g(\tau_\wp) =  \left(\begin{array}{cc} 
0 &  \zeta   \\ 
 \zeta & 0 
 \end{array}\right), \qquad \mbox{ with } \zeta^2 = - \chi_g(p).
\end{equation}
 The eigenvalues of $\varrho_g(\tau_\wp)$ are equal to $\alpha := \zeta$ and $\beta:=-\zeta$, and hence
 $g$ is {\em always regular} in this setting.

  Let 
  $$\tilde\varrho_g: G_{\Q} \lra \GL(\tilde V_g)$$ denote the first-order infinitesimal deformation
   of $\varrho_g$ attached to the Hida family $\hg$ passing through a choice of $p$-stabilization $g_\alpha$ of $g$,
  where  $\alpha\in \{\zeta,-\zeta\}$.
  The module $\tilde V_g$ is free of rank two over the ring
   $\Q_p[\varepsilon]/(\varepsilon^2) = \Q_p[[T]]/(T^2)$ 
arising from  the mod $T^2$ reduction of the representation $\varrho_{\hg}$ attached to  $\hg$.
Choose any $\Q_p[\varepsilon]$-basis $(\tilde e_1, \tilde e_2)$ of $\tilde V_g$
 lifting $(e_1,e_2)$,  and note that the restriction of $\tilde\varrho_g$  to $G_K$
is given by:
\begin{equation}
\label{trhog}
 \tilde\varrho_g(\sigma)  =  \left(
\begin{array}{cc} 
 \psig(\sigma)  \cdot (1+  \kappa(\sigma) \cdot   \varepsilon)  & \psi_g' (\sigma)  \cdot \kappa_\psi(\sigma)   \cdot \varepsilon \\
\psig(\sigma)   \cdot \kappa_\psi'(\sigma)  \cdot \varepsilon & \psi_g' (\sigma) \cdot (1+\kappa'(\sigma)  \cdot \varepsilon)  
\end{array} \right), \quad \mbox{ for all } \sigma\in G_K.
 \end{equation}
In this expression, 
\begin{itemize}
\item[(a)] The functions $\kappa$ and $\kappa'$ are continuous homomorphisms from $G_K$ to $\Q_p$, i.e., 
elements of $H^1(K,\Q_p)$,  which are interchanged by  conjugation by the involution in    $\Gal(K/\Q)$:
$$ \kappa(\tau\sigma\tau^{-1}) = \kappa'(\sigma), \qquad \tau\in G_\Q - G_K, \ \ \ \sigma\in G_K.$$
\item[(b)]
The functions $\kappa_\psi,\kappa_\psi':G_K \lra \Q_p$ 
are one-cocycles with values in $\Q_p(\psim)$, and 
give rise to well defined classes
$$ \kappa_\psi\in H^1(K, \Q_p(\psim)), \qquad 
\kappa_\psi' \in H^1(K, \Q_p(\psim^{-1})),$$  
which also satisfy
$$ \kappa_\psi(\tau\sigma\tau^{-1}) = \kappa_\psi'(\sigma), \qquad \tau\in G_\Q - G_K, \ \ \ \sigma\in G_K.$$
\end{itemize}

For each rational prime $\ell\nmid Np$, the  $\ell$-th 
fourier coefficient $a_\ell(g_\alpha')$  is given by
 \begin{equation}\label{fourier}
 a_\ell(g_\alpha') = \frac{d}{dk} \mbox{Trace}(\varrho_{\hg}(\tau_\lambda))_{k=1} 
 \end{equation}

Observe that the spaces $H^1(K,\Q_p)$ and $H^1(K, \Q_p(\psim))$ are
of dimensions two and one respectively over $\Q_p$, since  $\psim\ne 1$. 
More precisely, restriction to the inertia group at $p$ combined with 
local class field theory induces an isomorphism
\begin{equation}
\label{eqn:H1KQp}
 H^1(K,\Q_p) = \hom( \cO_{K_p}^\times, \Q_p) =  \Q_p \log_p(z)   \oplus \Q_p \log_p(z').
 \end{equation}
 
Let $\cO_H^{\times,\psi}$ denote the (one-dimensional) 
$\psi$-isotypic component of $\cO_H^{\times}\otimes \Q_p$ 
on which $\Gal(H/K)$ acts through the character $\psi$, and 
denote by $\wp$ the prime of $H$ above $p$ arising from our chosen embedding of
$\bar\Q$ into $\bar\Q_p$. 
Restriction   to the inertia group 
at $\wp$  in $G_H$ 
likewise gives rise  to an identification 
\begin{equation}
\label{eqn:H1KPsi}
 H^1(K,\Q_p(\psi)) = \hom( \cO_{H_{\wp}}^\times/ \cO_H^{\times,\psi}, \Q_p) = \Q_p\cdot( \log_\wp(u_\psi') \log_\wp(z) - \log_\wp(u_\psi) \log_\wp(z') ).
 \end{equation}
In the above equation, $u_\psi$ is to be understood as the natural image in $\cO_{H_{\wp}}^\times = \cO_{K_p}^\times$  of an  element of the form
$$  \sum_{\sigma\in Z} \psi^{-1}(\sigma) u^\sigma \in  (\cO_H^\times \otimes L)^{\psi},$$ 
where $u$ is an $L[G]$-module generator of $\cO_H^\times\otimes L$, 
and $u_\psi'$ is the image of $u_\psi$ under the conjugation action $K_p\lra K_p$. 
Note that replacing $u$ by $\lambda u$ for some $\lambda \in L[G]$  
has the effect of multiplying both $u_\psi$ and $u_\psi'$ by $\psi(\lambda) \in \Q_p$, 
so that the $\Q_p$-line spanned by the right-hand side
of \eqref{eqn:H1KPsi} is independent of the choice of $u\in \cO_H^\times$.

It follows  from \eqref{eqn:H1KQp} and
\eqref{eqn:H1KPsi} 
that the total deformation space 
of $\varrho_g$ (before imposing any  ordinarity hypotheses, or restrictions on the determinant)
is  three dimensional.

Let $v_g^+ := e_1 + e_2$ and  $v_g^-:= e_1-e_2$ be the eigenvectors for $\tau_\wp$ acting on $V_g$, with eigenvalues $\zeta$ and $-\zeta$ respectively.
Let  $\kappa_p$ and $\kappa_{\psi,\wp}$  denote the 
restrictions  $\kappa$ and $\kappa_\psi$  
to the inertia groups at $p$  and 
$\wp$ in $G_H$ and $G_K$respectively. Both can be viewed as 
characters of $K_p^\times = H_\wp^\times$ 
after identifiying the  abelianisations of $G_{K_p}$ and 
$G_{H_{\wp}}$  with a quotient of 
 $K_p^\times$ 
 via local class field theory.
\begin{lemma}
\label{lemma}
The following are equivalent:
\begin{itemize}
\item[(a)]
The inertia group at $\wp$ acts as the identity on some lift $\tilde v_g^+$ of $v_g^+$ to $\tilde V_g$;
\item[(b)]
The inertia group at $\wp$ acts as the identity on all lifts $\tilde v_g^+$ of $v_g^+$ to $\tilde V_g$;
\item[(c)]
The restrictions $\kappa_p$ and $\kappa_{\psi,\wp}$  satisfy
$$ \kappa_p(x) = -\kappa_{\psi,\wp}(x), \qquad 
\mbox{ for all } x\in \cO_{K_p}^\times.$$
\end{itemize}
Similar statements hold  when $v_g^+$ is replaced by $v_g^-$, where the conclusion is that $\kappa_p = \kappa_{\psi,\wp}$. 
\end{lemma}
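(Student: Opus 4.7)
My plan is to reduce the lemma to a direct matrix computation using the explicit formula \eqref{trhog}, and then interpret the resulting identities through local class field theory. The argument should be essentially self-contained once the framework of the previous paragraphs is in place.

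First I would record that $\psi_g$ and $\psi_g'$ are unramified at $\wp$, since $\varrho_g$ is unramified at $p$ (recall $p\nmid N$) and its restriction to $G_K$ decomposes as $\psi_g\oplus\psi_g'$. Because $p$ is inert in $K$ and splits completely in $H/K$, the inertia subgroup $I_\wp\subset G_\Q$ coincides with the inertia at $\wp$ in both $G_K$ and $G_H$, and local reciprocity identifies its abelianization with $\cO_{K_p}^\times=\cO_{H_\wp}^\times$. On $I_\wp$ the formula \eqref{trhog} therefore simplifies to
\[
\tilde\varrho_g(\sigma) \;=\; \begin{pmatrix} 1+\kappa(\sigma)\,\varepsilon & \kappa_\psi(\sigma)\,\varepsilon \\ \kappa_\psi'(\sigma)\,\varepsilon & 1+\kappa'(\sigma)\,\varepsilon \end{pmatrix},\qquad \sigma\in I_\wp.
\]

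Next, I would parametrize an arbitrary lift of $v_g^+=e_1+e_2$ as $\tilde v_g^+ = \tilde e_1 + \tilde e_2 + \varepsilon w$ with $w\in V_g$ and compute, for $\sigma\in I_\wp$,
\[
\tilde\varrho_g(\sigma)(\tilde v_g^+) - \tilde v_g^+ \;=\; \varepsilon\bigl[(\kappa+\kappa_\psi)(\sigma)\,\tilde e_1 \,+\, (\kappa'+\kappa_\psi')(\sigma)\,\tilde e_2\bigr],
\]
using that the $\varepsilon w$-term is preserved verbatim, since the $\varepsilon$-components of the matrix acting on $\varepsilon w$ are annihilated by $\varepsilon^2=0$. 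The crucial observation is that this discrepancy is independent of $w$, so inertia fixing some lift is automatically equivalent to inertia fixing every lift, yielding (a)$\Leftrightarrow$(b) with no further input.

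Finally, the vanishing of the above discrepancy for all $\sigma\in I_\wp$ amounts to the two identities $(\kappa+\kappa_\psi)|_{I_\wp}=0$ and $(\kappa'+\kappa_\psi')|_{I_\wp}=0$; but the conjugation relations displayed after \eqref{trhog} give $\kappa'(\sigma)=\kappa(\tau_\wp\sigma\tau_\wp^{-1})$ and likewise $\kappa_\psi'(\sigma)=\kappa_\psi(\tau_\wp\sigma\tau_\wp^{-1})$, so since $\tau_\wp$ normalizes $I_\wp$ these two conditions are equivalent, and translating across the reciprocity isomorphism $I_\wp^{\mathrm{ab}}\simeq \cO_{K_p}^\times$ yields exactly (c). For $v_g^-=e_1-e_2$, the identical computation with off-diagonal sign flips produces $(\kappa-\kappa_\psi)|_{I_\wp}=0$, i.e.\ $\kappa_p=\kappa_{\psi,\wp}$. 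I do not anticipate any real obstacle here: the conceptual content of the lemma is simply that the $\varepsilon$-ambiguity in choosing a lift lies in $\varepsilon V_g$, which is automatically fixed by $\tilde\varrho_g(\sigma)$ because inertia acts trivially modulo $\varepsilon$, so the obstruction to fixing a lift is independent of the choice.
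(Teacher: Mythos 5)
Your proof is correct and follows essentially the same route as the paper's: the equivalence (a)$\Leftrightarrow$(b) comes from the fact that $\varepsilon\tilde V_g\simeq V_g$ is unramified at $p$ so inertia fixes the $\varepsilon w$-ambiguity in the lift, and (c) comes from expanding $\tilde\varrho_g(\sigma)(\tilde e_1+\tilde e_2)$ via \eqref{trhog} with $\psi_g(\sigma)=\psi_g'(\sigma)=1$ on inertia. You are in fact slightly more complete than the paper, which leaves implicit why the two resulting conditions $(\kappa+\kappa_\psi)|_{I_\wp}=0$ and $(\kappa'+\kappa_\psi')|_{I_\wp}=0$ collapse to the single condition (c); your observation that $\tau_\wp$ normalizes $I_\wp$ and interchanges the two via the conjugation relations settles this cleanly.
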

\begin{proof}
The equivalence of the first two conditions follows from the fact that
 $ \varepsilon \tilde V_g \simeq V_g$ is unramified at $p$ and hence
that inertia acts as the identity on the kernel of the natural map $\tilde V_g \lra V_g$. 
To check the third, note that the inertia group $I_p$ at $p$ is contained in  $G_K$, since $K$ is unramified at $p$,
and observe that any  $\sigma\in I_p$  sends 
$\tilde e_1+ \tilde e_2$ to    
\begin{eqnarray*}
\tilde\varrho_g(\sigma) (\tilde e_1 + \tilde e_2) &=& 
 \tilde e_1 +\tilde e_2 + \varepsilon\cdot ( \kappa(\sigma) \tilde e_1 + \kappa_\psi'(\sigma) \tilde e_2 + \kappa_\psi(\sigma) \tilde e_1 + \kappa'(\sigma) \tilde e_2) \\ 
&=&   \tilde e_1 +\tilde e_2 + \varepsilon\cdot ( ( \kappa(\sigma)+  \kappa_\psi(\sigma))  \tilde e_1  + (\kappa'(\sigma) +  \kappa_\psi'(\sigma)) \tilde e_2).
  \end{eqnarray*}
  The lemma follows.
  \end{proof}
 A  lift $\tilde\varrho_g$   of $\varrho_g$ is ordinary relative to the space spanned by $v_g^+$ 
 if and only if it satisfies the equivalent conditions of Lemma \ref{lemma}.
This    lemma  merely spells out the proof of the 
   Bellaiche-Dimitrov theorem on   the one-dimensionality
  of the tangent space of the eigencurve at the point associated to $g_\alpha$. 
More precisely, the
   general ordinary first-order deformation of $\varrho_g$ is completely determined by the 
   pair $(\kappa_p, \kappa_{\psi,\wp})$, which depends on a  single linear parameter 
 $\mu \in\bar\Q_p$ and is given  by the rule
  \begin{eqnarray}
  \kappa_p(z) & =&   \mu (\log_\wp(u_\psi') \cdot \log_\wp(z) - \log_\wp u_\psi \cdot \log_\wp(z')), \\
 \kappa_{\psi,p}(z) &=& \pm \mu (\log_\wp(u_\psi') \cdot \log_\wp(z) - \log_\wp u_\psi \cdot \log_\wp(z')),
 \end{eqnarray}
  where the sign in the second formula
  depends on whether one is working with the ordinary deformation of  $g_\alpha$ or $g_\beta$.

 Let us now make use of the fact that
  $$\det(\tilde\varrho_g) = 1 + \varepsilon \log_p \chi_{\cyc}  = 1 +  \varepsilon \log_p(z  z').$$
  Since $\det(\tilde\varrho_g) = 1 + \varepsilon (\kappa+\kappa')$, this condition implies that
  $$ \mu =  \frac{1}{\log_\wp(u_\psi') - \log_\wp(u_\psi)},$$
  and hence that    $\kappa_p$  and $\kappa_{\psi,\wp}$ are  
  given by
 \begin{eqnarray}
 \label{eqn:apz}
  \kappa_p(z)  &=&  \frac{ \log_\wp(u_\psi') \cdot \log_\wp(z) - \log_\wp(u_\psi) \cdot \log_\wp(z')}{ \log_\wp(u_\psi') - \log_\wp(u_\psi)}, \\
  \label{eqn:kappapz}
  \kappa_{\psi,\wp}(z) &=& \pm \frac{\log_\wp(u_\psi')\cdot \log_\wp(z) - \log_\wp(u_\psi) \cdot \log_\wp(z')} {\log_\wp(u_\psi') - \log_\wp(u_\psi)}. 
  \end{eqnarray}
  Equations \eqref{eqn:apz} and \eqref{eqn:kappapz} give a 
completely explicit description of the first order
   deformation $\tilde\varrho_{g_\alpha}$ and $\tilde\varrho_{g_\beta}$, from which 
    the fourier coefficients of $g_\alpha'$ and $g_\beta'$ shall be  readily calculated.
    
 The formula for the $\ell$-th fourier coefficient of $g_\alpha'$ involves the unit $u_\psi$ above as well as certain $\ell$-units in 
 $\cO_H[1/\ell]^\times \otimes L$ whose definition depends on whether or not the prime
 $\ell$ is split or inert in $K/\Q$.
 
 If $\ell = \lambda \lambda'$ splits in $K/\Q$,   
 let $u(\lambda)$  and $u(\lambda')$ 
 denote, as before, the
     $\ell$-units in $\cO_K[1/\ell]^\times \otimes \Q$ of norm $\ell$ with prime factorisation 
     $\lambda$ and  $\lambda'$ respectively.
Set
          $$ u_g(\lambda) := u(\lambda)\otimes {\psig(\lambda)}  + u(\lambda')
          \otimes {\psig(\lambda')}, \qquad
    u_g(\lambda') :=  u(\lambda') \otimes {\psig(\lambda)}   +  u(\lambda) \otimes{\psig(\lambda')}.
    $$
    In other words, $u_g(\lambda)$ is the unique element of  $\cO_K[1/\ell]^\times \otimes L$ whose prime factorisation is 
    equal to  $ \psig(\lambda) \cdot \lambda + \psig(\lambda') \cdot \lambda'$. 
    Note that, if $\ell$ splits completely in $H/\Q$,
    i.e., if $\varrho_g(\tau_\lambda)$ is equal to  a scalar $\zeta$, 
    then $u_g(\lambda) = u_g(\lambda') =  \ell\otimes\zeta$, but that otherwise $u_g(\lambda)$ and
     $\ell$ generate the $L$-vector space
      $\cO_K[1/\ell]^\times\otimes L$  of $\ell$-units of $K$ (tensored with $L$). 

 If $\ell$ is inert in $K/\Q$, choose a prime $\lambda$ of $H$ lying above $\ell$, and let 
        $u(\lambda)  \in \cO_H[1/\lambda]^\times\otimes \Q$ be any
  $\lambda$-unit of $H$ satisfying 
 $ \ord_\lambda(u(\lambda)) = 1$, which is well defined
up to units in $\cO_H^\times$. Define the elements
\begin{eqnarray*}
u_\psi(\lambda) &=& \sum_{\sigma\in Z} \psim^{-1}(\sigma) \otimes \ ^\sigma \!u(\lambda)\quad\in\quad
L\otimes   \cO_H[1/\ell]^\times, \\
u_\psi'(\lambda) &=& \tau_\wp u_\psi(\lambda) \quad\in\quad
L\otimes   \cO_H[1/\ell]^\times.
\end{eqnarray*}
Thus $u_\psi(\lambda)$ lies in the $\psi$-component  $\cO_H[1/\ell]^\times[\psi]$ and  is  well-defined up to the addition of multiples of $u_\psi$,
where 
$$ u_\psi :=  \sum_{\sigma\in Z} \psim^{-1}(\sigma) \otimes \ ^\sigma\! u \quad\in\quad
L\otimes   \cO_H[1/\ell]^\times, $$
for any unit $u\in \cO_H^\times$,  while 
$u_\psi'(\lambda)$ lies in the $\psi^{-1}$ component and is well-defined up to the addition of multiples of 
$u_\psi'$, where
$$ u_\psi' = \tau_\wp  u_\psi.$$
Recall the function $\eta_g': G_\Q \setminus G_K$ introduced in \eqref{eqn:def-eta-g}, with values in the roots of unity of $L^\times$.    The main result of this section is:
  \begin{theorem}
  \label{thm:CM-p-inert}
 Let $\ell \nmid Np$ be a rational   prime. 
 \begin{itemize}
 \item[(a)]
  If $\ell = \lambda \lambda'$ splits in $K/\Q$, then
 \begin{equation}
  \label{eqn:CM-p-inert-l-split}
   a_\ell(g_\alpha') =   a_\ell(g_\beta') =  
    \frac{ \log_\wp(u_\psi') \cdot \log_\wp(u_g(\lambda))  - \log_\wp(u_\psi)\cdot \log_\wp  (u_g(\lambda'))}{ \log_\wp( u_\psi') - \log_\wp(u_\psi)}.
    \end{equation}
 \item[(b)]
  If $\ell$ 
   remains inert in $K/\Q$, then 
    $$ a_\ell(g_\alpha') =  \eta_g'(\tau_\lambda)  \frac{\log_\wp ( {u}_\psi')  \log_\wp (u_\psi(\lambda)) - \log_\wp (u_\psi ) \log_\wp ({u}_\psi'(\lambda)) }
    {\log_\wp (u_{\psi}') - \log_\wp(u_\psi))}.$$
    \end{itemize}
 \end{theorem}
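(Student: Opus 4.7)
The approach combines the explicit matrix expression \eqref{trhog} of the first-order deformation $\tilde\varrho_g|_{G_K}$ with the formulas \eqref{eqn:apz}--\eqref{eqn:kappapz} for $\kappa_p$ and $\kappa_{\psi,\wp}$, evaluated at Frobenius elements through global class field theory. The split and inert cases proceed in parallel, with the crucial distinction that $\tau_\lambda \in G_K$ when $\ell$ splits, while $\tau_\lambda \notin G_K$ when $\ell$ is inert; in the latter case we work instead with $\sigma_\lambda := \tau_\lambda^2 \in G_K$.

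For part (a), direct substitution of $\sigma = \tau_\lambda$ into \eqref{trhog} and extraction of the $\varepsilon$-coefficient of the trace gives
\[
  a_\ell(g_\alpha') \;=\; \psi_g(\tau_\lambda)\,\kappa(\tau_\lambda) + \psi_g'(\tau_\lambda)\,\kappa'(\tau_\lambda).
\]
Since $g$ has level prime to $p$, the cocycles $\kappa$ and $\kappa'$ are unramified outside $\fp$, so global class field theory for $K$ identifies $\kappa(\tau_\lambda)$, up to sign, with $\kappa_p$ evaluated on the image in $\cO_{K_\fp}^\times$ of the $\ell$-unit $u(\lambda)$, while $\kappa'(\tau_\lambda)$ picks up the conjugate $u(\lambda')$. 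Feeding these into \eqref{eqn:apz} and regrouping the resulting $\log_\wp(u(\lambda))$ and $\log_\wp(u(\lambda'))$ terms according to the definitions of $u_g(\lambda)$ and $u_g(\lambda')$ produces the formula \eqref{eqn:CM-p-inert-l-split}. The coincidence $a_\ell(g_\alpha') = a_\ell(g_\beta')$ emerges because the sign ambiguity of \eqref{eqn:kappapz} only affects the off-diagonal entries $\kappa_\psi, \kappa_\psi'$, which do not contribute to the trace when $\sigma \in G_K$.

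For part (b), write the matrix of $\tilde\varrho_g(\tau_\lambda)$ relative to $(\tilde e_1, \tilde e_2)$ as
\[
 \tilde\varrho_g(\tau_\lambda) \;=\; \left(\begin{array}{cc} A\varepsilon & \eta_g(\tau_\lambda) + B\varepsilon \\ \eta_g'(\tau_\lambda) + C\varepsilon & D\varepsilon \end{array}\right)
\]
for unknowns $A,B,C,D$. Squaring this matrix (using $\varepsilon^2 = 0$) and comparing with the known expression \eqref{trhog} for $\tilde\varrho_g(\sigma_\lambda)$, the comparison of the upper-right entries yields $A+D = \eta_g'(\tau_\lambda)\,\kappa_\psi(\sigma_\lambda)$, so that $a_\ell(g_\alpha') = A+D = \eta_g'(\tau_\lambda)\,\kappa_\psi(\sigma_\lambda)$. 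It remains to evaluate $\kappa_\psi(\sigma_\lambda)$: the inflation-restriction sequence identifies the class of $\kappa_\psi \in H^1(K, \Q_p(\psim))$ with a $Z$-equivariant (of weight $\psim$) homomorphism $G_H^{\mathrm{ab}} \to \Q_p$, unramified outside the primes of $H$ above $p$, and global class field theory for $H$ expresses its value at the Frobenius associated to the $\lambda$-unit $u(\lambda) \in H$ in terms of the $\wp$-adic images of the isotypic projections $u_\psi(\lambda), u_\psi'(\lambda)$. Substituting these into the local formula \eqref{eqn:kappapz} yields exactly the quotient of $\wp$-adic logarithms claimed in the theorem.

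The principal obstacle in both parts is this class field theory translation: the formulas \eqref{eqn:apz} and \eqref{eqn:kappapz} describe $\kappa$ and $\kappa_\psi$ only on inertia at $\wp$, so one must combine the unramifiedness of the Hida family outside $p$ with a careful bookkeeping in the idele class group to identify the global Frobenius at $\lambda$ with suitable units $u(\lambda)$ or $u_\psi(\lambda)$ in $H$. Maintaining the $\psim$-isotypic structure is the central subtlety in case (b); case (a) reduces to a computation essentially parallel to the one giving the classical theta-series formula of Theorem \ref{thm:main-cm-p-split}.
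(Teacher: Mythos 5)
Your proposal is correct and follows essentially the same route as the paper's own proof: in the split case, take the trace of \eqref{trhog} at the Frobenius in $G_K$, identify $\kappa$ at that Frobenius with $\kappa_p$ evaluated on the $\ell$-unit $u(\lambda)$ via class field theory, and invoke \eqref{eqn:apz}; in the inert case, square the matrix of $\tilde\varrho_g(\tau_\lambda)$, read off the trace $a_\ell(g_\alpha')=\eta_g'(\tau_\lambda)\,\kappa_\psi(\sigma_\lambda)$ from the upper-right entry of $\tilde\varrho_g(\sigma_\lambda)$ using \eqref{eqn:eta-tau1-tau2}, and then apply \eqref{eqn:kappapz}. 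The only differences are cosmetic; your explicit remark on why the sign ambiguity in \eqref{eqn:kappapz} does not affect the split-case trace is a small addition the paper leaves implicit.
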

\begin{proof}  
Let us first  compute first the fourier coefficients at primes $\ell \nmid Np$ that split
 as $\ell=\lambda \lambda'$ in $K$. Let $\sigma_\lambda$ and $\sigma_{\lambda'}$
 be the frobenius elements associated to 
 $\lambda$ and $\lambda'$ respectively. 
 They are well-defined elements in the Galois group of any abelian extension of $K$ in which 
 $\ell$ is unramified. 
 
 It follows from \eqref{fourier} and the matrix expression for $\tilde{\varrho}_{g|G_K}$ given in \eqref{trhog} that
\begin{eqnarray*}
 a_\ell(g_\alpha') &=& \psig(\lambda) \kappa(\lambda) + \psig(\lambda') \kappa(\lambda') \\ 
 &=& \psig(\lambda) \kappa_p(u(\lambda)) + \psig(\lambda') \kappa_p(u'(\lambda)).
 \end{eqnarray*}
 Equation \eqref{eqn:CM-p-inert-l-split}
 then  follows from the formula for $\kappa_p(z)$ given in 
 \eqref{eqn:apz}.
      
  We now  turn now to the computation of the fourier coefficients
   of $g'_\alpha$  at primes $\ell \nmid Np$ that remain inert in $K$. 
   Let $\tau_\lambda$ denote the Frobenius element in $\Gal(M/\Q)$ associated to the choice
   of a   prime ideal $\lambda$ above $\ell$ in $\bar\Q$, and let $\sigma_\lambda:= \tau_\lambda^2$
   denote the associated frobenius element in $\Gal(M/K)$. 
   
   Since $\tau_\lambda$ belongs to $G_\Q - G_K$, it follows from
   \eqref{eqn:def-eta-g} that the matrix $\tilde\varrho_g(\tau_\lambda)$ is of the form
   $$ \tilde\varrho_g(\tau_\lambda) = 
    \left(\begin{array}{cc}
   r_\ell \cdot \varepsilon &  \eta_g(\tau_\lambda)(1+ s_\ell\cdot \varepsilon) \\
   \eta_g'(\tau_\lambda)(1+t_\ell \cdot \varepsilon) & u_\ell  \cdot \varepsilon 
 \end{array} \right),$$
 for suitable scalars $r_\ell, s_\ell, t_\ell$, and $u_\ell \in \bar\Q_p$.  
  Since
$$
a_\ell(g_\alpha) + a_\ell(g_\alpha') \varepsilon= \mbox{Trace}(\tilde\varrho_{g}(\tau_\lambda)) = (r_\ell + u_\ell)\cdot\varepsilon,
$$
it follows that
\begin{equation}
\label{eqn:al-trace}
a_\ell(g_\alpha') = r_\ell + u_\ell.
\end{equation}
In order to compute this trace, 
we observe that it arises in the upper  right-hand and lower left-hand  entries of the matrix 
\begin{equation}
\label{eqn:lower-upper}
 \tilde\varrho_g(\sigma_\lambda) = \tilde\varrho_g(\tau_\lambda)^2 = 
   \left(\begin{array}{cc}
   \psi_g(\sigma_\lambda)(1 + (s_\ell + t_\ell)\cdot \varepsilon)     &  \eta_g(\tau_\lambda) (r_\ell+u_\ell) \cdot \varepsilon \\
   \eta_g'(\tau_\lambda)(r_\ell+u_\ell) \cdot \varepsilon  &  \psi_g(\sigma_\lambda) (1+ (s_\ell+t_\ell)\cdot \varepsilon) 
 \end{array} \right).
 \end{equation}
  On the other hand, since $\sigma_\lambda$ belongs to $G_K$ it follows from
   \eqref{trhog} that
\begin{equation}
\label{eqn:invoke-trhog}
 \tilde\varrho_g(\sigma_\lambda)  =  \left(
\begin{array}{cc} 
 \psig(\sigma_\lambda)  \cdot (1+ \kappa(\sigma_\lambda) \cdot   \varepsilon)  & \psi_g'(\sigma_\lambda)  \cdot \kappa_\psi(\sigma_\lambda)   \cdot \varepsilon \\
\psig(\sigma_\lambda)  \cdot \kappa_\psi'(\sigma_\lambda)  \cdot \varepsilon & \psi_g'(\sigma_\lambda)  \cdot (1+\kappa'(\sigma_\lambda)  \cdot \varepsilon).  
\end{array} \right)
\end{equation}
By comparing upper-right entries  in the matrices in 
\eqref{eqn:lower-upper} and  \eqref{eqn:invoke-trhog} and invoking 
\eqref{eqn:al-trace} together with the relation $\psi_g'(\sigma_\lambda) \eta_g(\tau_\lambda)^{-1} = \eta_g'(\tau_\lambda)$
arising from  \eqref{eqn:eta-tau1-tau2},
we deduce that 
$$
a_\ell(g_\alpha')  = \eta_g'(\tau_\lambda)\kappa_\psi(\sigma_\lambda).
$$
It is worth noting that   each of the expressions $\eta_g'(\tau_\lambda)$ and 
$\kappa_\psi(\sigma_\lambda)$ depend on the choice of a prime $\lambda$ of $H$ above $\ell$ that was made to 
define $\tau_\lambda$ and $\sigma_\lambda$, since changing this prime replaces
$\tau_\lambda$  and $\sigma_\lambda$ by their conjugates $\sigma\tau_\lambda \sigma^{-1}$
and $\sigma\sigma_\lambda \sigma^{-1}$ by   some $\sigma\in G_K$. More precisely, 
by \eqref{eqn:eta-sigma-tau} and the cocycle property of $\kappa_\psi$, 
$$ \eta_g'(\sigma \tau_\lambda \sigma^{-1}) = \psi^{-1}(\sigma) \eta_g'(\tau_\lambda), \qquad  \kappa_\psi(\sigma \sigma_\lambda \sigma^{-1}) = \psi(\sigma) \kappa_\psi(\sigma_\lambda).  
$$ 
In particular,  the product $\eta_g'(\tau_\lambda) \kappa_\psi(\sigma_\lambda)$  
 is independent of the choice of a prime above $\ell$, as it should be.
Note that $\eta_g'(\tau_\lambda)$ is a simple root of unity belonging to the image of $\psi_g$, while 
 $\kappa_\psi(\sigma_\lambda)$  represents the interesting ``transcendental"
 contribution to the fourier coefficient $a_\ell(g_\alpha')$. 
 
 By the description of $\kappa_\psi(\sigma_\lambda)$ arising from local and global class field theory, we conclude from
  \eqref{eqn:kappapz} that
\begin{equation}
\label{eqn:p-inert-l-inert}
a_\ell(g_\alpha') =  \eta_g'(\tau_\lambda) \frac{\log_\wp({u}_\psi') \log_\wp(u_\psi(\lambda)) - \log_\wp(u_\psi) \log_\wp({u}_\psi'(\lambda))}
{{\log_\wp (u_{\psi}')  - \log_\wp (u_\psi)}},
\end{equation}
as was to be shown.
\end{proof}

A more efficient (but somewhat less transparent) route to the proof of Theorem 
  \ref{thm:CM-p-inert} is to specialise Theorem \ref{thm:main-general-regular} 
  to this setting. Relative to a basis of the form $(v,\tau_\wp v)$ for $V_g$,
  where $v$ spans a $G_K$-stable subspace of $V_g$ on which $G_K$ acts via
  $\psig$, the matrix for $U_g$ is proportional to one
  of the form
  $$ U_g: \left(  \begin{array}{cc}
  0 & \log_{\wp}(u_\psi) \\
  \log_{\wp}(\tau_\wp u_\psi) & 0 \end{array}\right), $$
  and the ordinarity condition implies that the matrix representing $A_g$ is proportional
  to a matrix of the form 
  $$ A : \left( \begin{array}{cc} 
  x & -x \\ 
  y & -y 
  \end{array} \right). 
  $$
  The relations ${\rm Trace}(A_g U_g) = 0$ and ${\rm Trace}(A_g) =1$ 
  show that $A_g$ is represented by the matrix
  $$ A_g: \frac{1}{\log_\wp(u_\psi) - \log_\wp(\tau_\wp u_\psi)} \cdot \left( \begin{array}{cc} 
  \log_\wp(u_\psi) & -\log_\wp(u_\psi) \\
  \log_\wp(\tau_\wp u_\psi) & - \log_\wp(\tau_\wp u_\psi)
  \end{array} \right),
   $$ 
   and Theorem  
  \ref{thm:CM-p-inert} is readily deduced from the general formula for 
  the fourier coefficients of $g_\alpha'$ given in  Theorem \ref{thm:main-general-regular}. 
  The details are left to the reader.

\vspace{0.2cm}

\subsection{Numerical examples}
 
We begin with an illustration of  Theorem \ref{thm:CM-p-inert}
 in which the  image of $\varrho_g$  is isomorphic to the symmetric group
 $S_3$.
 
\begin{example}
Let $\chi$ be the quadratic character of conductor $23$ and 
\[ g  = q - q^2 - q^3 + q^6 + q^8 + \cdots \in S_1(23,\chi)\] 
be the theta
series attached to the imaginary quadratic field $K = \Q(\sqrt{-23})$. The Hilbert class
field $H$ of $K$ is 
\[ H = \Q(\alpha) \ \ \ \ \mbox{ where } \ \  \ \  \alpha^6 - 6 \alpha^4 + 9 \alpha^2 + 23 = 0.\]
Write $\Gal(H/K) = \langle \sigma \rangle$. The smallest prime  which is inert in $K$ is $p=5$.
The deformations $g^\prime_{1}$ and $g^\prime_{-1}$ were computed 
to a  $5$-adic precision of $5^{40}$ (and
$q$-adic precision $q^{600}$).

Consider the inert prime $\ell = 7$ in $K$. Let $u(7) = (2 \alpha ^4 - 7 \alpha ^2 + 5)/9$, a root
of $x^3 - x^2 + 2 x - 7 = 0$. Taking $\omega$ a primitive cube root of unity we have
\[ 
\begin{array}{rcl}
\log_{5}(u_\psi(7)) & =  & \log_5(u(7)) + \omega \log_5(u(7)^\sigma) + \omega^2 \log_5(u(7)^{\sigma^2}) \\
\log_{5}({u}'_\psi(7)) & =  & \log_5(u(7)) + \omega^2 \log_5(u(7)^\sigma) + \omega \log_5(u(7)^{\sigma^2}). 
\end{array}
\]
Let $u = (\alpha^2 - 1)/3$ be the elliptic unit in $H$, a root of $x^3 - x^2 + 1 = 0$. Then likewise we have
\[ 
\begin{array}{rcl}
\log_{5}(u_\psi) & =  & \log_5(u) + \omega \log_5(u^\sigma) + \omega^2 \log_5(u^{\sigma^2})  \\
\log_{5}({u}'_\psi)  & = &  \log_5(u) + \omega^2 \log_5(u^\sigma) + \omega \log_5(u^{\sigma^2}).
\end{array}
\]
Now
\[ a_7(g^\prime_{1}) = - a_7(g^\prime_{-1}) = 4083079847610157092272537548 \cdot 5 \bmod{5^{40}} \]
and one checks to $40$ digits of $5$-adic precision that
\[ a_7(g^\prime_{1}) = \frac{\log_5(u_\psi(7))\log_5({u}'_\psi) - \log_5({u}'_\psi(7)) \log_5(u_\psi)}{\log_5(u_\psi) - \log_5({u}'_\psi)}\]
as predicted by part (b) of  Theorem \ref{thm:CM-p-inert}.

Consider next the  prime $\ell = 13$, which splits  in $K$ and factors as
 $(l) = \lambda {\lambda'}$, where  $\lambda^3 = (u_\lambda)$ is a principal idead generated by
 $u_\lambda = -6 \alpha^3+ 18 \alpha - 37$, a root of $x^2 + 74 x + 2197$.
After setting $u(\lambda)  = u_\lambda \otimes \frac{1}{3}$, we let
\[
\begin{array}{rcl}
\log_5(u_g(\lambda)) & = &  \left( \omega \log_5(u(\lambda)) + \omega^2 \log_5({u}'(\lambda)) \right)\\
\log_5({u}_g'(\lambda)) & = & \frac{1}{3}  \left( \omega^2 \log_5(u(\lambda)) + \omega \log_5({u}'(\lambda)) \right).
\end{array}
\]
We have
\[ a_{13}(g^\prime_1) = a_{13}(g^\prime_{-1}) = -638894131680830198852008592 \cdot 5 \bmod{5^{40}} \]
and one sees that
\[   a_{13}(g^\prime_{\pm 1}) = \frac{ \log_5 ({u}'(\psi))  \log_5(u_g(\lambda)) - \log_5 (u(\psi))  \log_5( u_g'(\lambda))}{ \log_5({u}_\psi') - \log_5(u_\psi)}
\]
 to $40$ digits of $5$-adic precision,  confirming
 Part (a) of Theorem \ref{thm:CM-p-inert}.

 \end{example}

The experiment below focuses on the case where  $\psig$ is a quartic ring class character,
       so that $\varrho_g$ has image isomorphic to the dihedral group of order $8$. 
       The associated ring class character
        $\psi = \psig/\psi_g' = \psig^2$ of $K$ is   quadratic,
  i.e., a genus character which cuts out a biquadratic exension $H$  of $\Q$ containing $K$.
   Let $F$ denote the unique real quadratic 
  subfield of $H$, and let $K'$ 
   the unique imaginary quadratic subfield of $H$ which is distinct from $K$. 
  The unit $u_\psi$ is a power of the fundamental unit of $F$. 
  Observe that the prime $p$ is necessarily inert in $K'/\Q$, since 
  otherwise $\varrho_g$ would be induced from a character of
  the real quadratic field $F$ in which $p$ splits. 
   It follows that $u_\psi' = u_\psi^{-1}$, so that  by Theorem
  \ref{thm:CM-p-inert},
  $$
  a_\ell(g_\alpha') =  
    \frac{ -\log_p  u_\psi  \cdot( \log_p(u_g(\ell)) + \log_p(u_g'(\ell)))}{-2  \log_p(u_\psi) } =\frac{1}{2}\cdot( \log_p(u_g(\ell)) +\log_p(u_g'(\ell))).
$$
It follows from the definition of $u_g(\ell)$ that
$$ a_\ell(g_\alpha') = {\rm trace}(\varrho_g(\lambda)) \cdot \log_p(\ell).$$
In particular, we obtain
\begin{equation}
\label{eqn:alan-dihedral}
 a_\ell(g_\alpha') = \left\{ \begin{array}{ll} 
 \log_p(\ell) & \mbox{ if } \psig(\lambda)  = 1, \\
0 & \mbox{ if } \psig(\lambda) = \pm i, \\
- \log_p(\ell) & \mbox{ if } \psig(\lambda) = -1, \end{array} \right. 
\end{equation}
in perfect agreement with the  experiments below.

\begin{example}
Let $\chi = \chi_3 \chi_{13}$ where $\chi_3$ and $\chi_{13}$ are the quadratic characters of conductors
$3$ and $13$, respectively. The space $S_1(39,\chi)$ is one dimensional and spanned by the form
$ g = q - q^3 - q^4 + q^9 + \cdots $.
The representation $\rho_g$ has projective image $D_4$ and is induced from characters of two imaginary
quadratic fields and one real quadratic field. In particular, it is induced from the quadratic character $\psi_g$ of the Hilbert class field
\[ H = \Q(\sqrt{-39},a),\, a^4 + 4 a^2 - 48 = 0 \]
of $\Q(\sqrt{-39})$ (and also ramified characters of ray class fields of $\Q(\sqrt{-3})$ and $\Q(\sqrt{13})$).
Let $p = 7$, which is inert in $\Q(\sqrt{-39})$. We computed $g^\prime_{\pm 1}$ to $20$-digits of $7$-adic
precision (and to $q$-adic precision $q^{900}$).

First consider the
case of $\ell = \lambda \lambda^\prime$ split in $\Q(\sqrt{-39})$. Then one observes to $20$-digits of $7$-adic precision and all
such $\ell < 900$ that both $a_\ell(g_1')$ and $a_\ell(g_{-1}')$ satisfy
\eqref{eqn:alan-dihedral}.
Next we consider the case that $\ell$ is inert in $\Q(\sqrt{-39})$. Here one observes
numerically that the Fourier coefficients are zero when $\ell$ is inert in
$\Q(\sqrt{-3})$. When $\ell$ is split in $\Q(\sqrt{-3})$ the Fourier coefficients of the two stabilisations
are opposite in sign and both equal to 
the $p$-adic logarithm of a fundamental $\ell$ unit of norm $1$
in $\Q(\sqrt{-3})$. (Observe that
$p$ is split in $\Q(\sqrt{-3})$ and our numerical observations are
consistent in this example with both our theorems for $p$ split and $p$ inert
in the CM case.)
\end{example}

\section{RM forms}

Consider now the case where $g$ is the theta series attached to 
a character 
$$\psig: G_K \lra L^\times$$ 
of mixed signature of a real quadratic field $K$. As before,
assume for simplicity that the field $L$ may be embedded into $\Q_p$ and fix one such embedding. We also continue to denote $H_g$   
 the abelian extensions of $K$ which is  cut out by $\varrho_g$, and let 
 $H$ be the ring class field of $K$ 
 cut out by  the non-trivial
  ring class character $\psim := \psig/\psig'$. 
  Since $\psi_g$ has mixed signature, it follows that $\psi$ is totally odd and
   thus $H$ is totally imaginary.   As before, write  $G:= \Gal(H/\Q)$ and
   $Z:= \Gal(H/K)$.

As explained in the introduction, the case where $p$ splits in $K$ was already dealt with in
\cite{DLR3}, so  in this section
 we   only consider the case  where $p$ is inert in $K/\Q$. 
 The prime $p$ then splits completely
in $H/K$ and we fix a prime $\wp$ of $H$ above $p$. This choice determines an embedding 
$H \lra H_{\wp}= K_p$ and we write $z\mapsto {z'}$ for the conjugation action 
of $\Gal(K_p/\Q_p)$. 
Let $u_K\in \cO_K^\times$ denote the fundamental unit of $\cO_K$ of norm $1$, which
 we regard as an element of $K_p^\times=H_{\wp}^\times$ through the above embedding, and let $u_K' = u_K^{-1}$ denote its algebraic conjugate.

Let $(v_1,v_2)$ be a basis for $V_g$ consisting of eigenvectors for the action of 
$G_K$, and which are interchanged by the frobenius element $\tau_\wp$. 
Just as in the previous section, relative to this basis the Galois representation $\varrho_g$ 
takes the form
\begin{equation}
 \varrho_{g}(\sigma) = \left(\begin{array}{cc} 
\psi_g(\sigma) & 0 \\ 
0 & \psi_g'(\sigma) \end{array}\right)  \  \mbox{for } \sigma\in G_K, \qquad \varrho_{g}(\tau) = \left(\begin{array}{cc} 
0 & \eta_g(\tau)   \\
  \eta_g'(\tau) & 0 \end{array}\right) \  \mbox{for } \tau \in G_\Q - G_K,
\end{equation}
where $\eta_g$ and $\eta_g'$ are functions taking values in the group of roots of unity in $L^\times$.
The element $U_g\in (H_\wp\otimes W_g)$  of
\eqref{eqn:def-Ug}  
 is thus represented the matrix
$$ U_g : \left(\begin{array}{cc}
\log_{\wp}(u_K) & 0 \\
0 & \log_{\wp}(u_K') \end{array} \right),$$
and hence the endomorphism $A_g$ of Lemma \ref{lemma:invariant-Gp}
is represented by the particularly simple matrix
$$ A_g : \frac{1}{2} \left( \begin{array}{rr}
1 & -1 \\ 
-1 & 1 \end{array}\right).$$
It follows that, if $\ell=\lambda\lambda'$ is split in $K/\Q$, we have
\begin{equation}
\label{eqn:rm-pinert-lsplit}
 A_g(\lambda) : \frac{1}{2} \left( 
\begin{array}{cc}
\log_p(\ell) & 0 \\ 0 & \log_p(\ell) 
\end{array}\right),
\end{equation}
while if $\ell$ is inert in $K/\Q$ and $\lambda$ is a prime of $H$ lying above $\ell$,
\begin{equation}
\label{eqn:rm-pinert-linert}
A_g(\lambda) : \frac{1}{2} \left( 
\begin{array}{cc}
\log_p(\ell) & -\log_{\wp}(u_\psi(\lambda)) \\ -\log_{\wp}(u_\psi'(\lambda)) & \log_p(\ell) 
\end{array}\right).
\end{equation}
\begin{theorem}
\label{thm:rm-case}
 For all rational primes $\ell \nmid Np$,
 \begin{enumerate}
 \item[(a)] If $\ell$ is split in $K/\Q$, then
 $$ a_\ell(g_\alpha') =  \frac{1}{2} a_\ell(g) \cdot \log_p(\ell).$$
 \item[(b)]
 If $\ell$ is inert in $K/\Q$, then
 $$ a_\ell(g_\alpha') = -\left( \eta_g(\lambda) \log_{\wp}(u_\psi'(\lambda)) +
 \eta_g(\lambda') \log_{\wp}(u_\psi(\lambda)\right)).$$
 \end{enumerate} 
 \end{theorem}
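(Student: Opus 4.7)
The plan is to deduce Theorem \ref{thm:rm-case} from the general formula provided by Theorem \ref{thm:main-general-regular}, since nearly all of the structural work has already been assembled in the preparatory calculation of $U_g$, $A_g$, and $A_g(\lambda)$ in the matrix forms displayed just above the statement. Specifically, I would invoke the identity
\[
a_\ell(g_\alpha') = \mathrm{Trace}\bigl(\varrho_g(\tau_\lambda)\, A_g(\lambda)\bigr)
\]
and simply match it against the explicit descriptions \eqref{eqn:rm-pinert-lsplit} and \eqref{eqn:rm-pinert-linert} of $A_g(\lambda)$ in the two cases.

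For part (a), when $\ell = \lambda\lambda'$ splits in $K/\Q$, the element $\tau_\lambda$ lies in $G_K$, and therefore $\varrho_g(\tau_\lambda)$ is diagonal in the chosen $G_K$-eigenbasis $(v_1,v_2)$ with diagonal entries $\psi_g(\tau_\lambda)$ and $\psi_g'(\tau_\lambda)$ whose sum is $a_\ell(g)$. By \eqref{eqn:rm-pinert-lsplit}, $A_g(\lambda)$ is the scalar matrix $\tfrac{1}{2}\log_p(\ell)\cdot I$, so the trace of the product collapses to $\tfrac{1}{2}\log_p(\ell)\cdot a_\ell(g)$, as required.

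For part (b), when $\ell$ remains inert in $K/\Q$, the frobenius $\tau_\lambda$ lies in $G_\Q - G_K$, so $\varrho_g(\tau_\lambda)$ is anti-diagonal with entries $\eta_g(\tau_\lambda)$ and $\eta_g'(\tau_\lambda)$. Multiplying this anti-diagonal matrix by the expression for $A_g(\lambda)$ given in \eqref{eqn:rm-pinert-linert}, the two contributions involving $\log_p(\ell)$ land off the diagonal and therefore drop out of the trace, leaving exactly the combination of $\log_\wp(u_\psi(\lambda))$ and $\log_\wp(u_\psi'(\lambda))$ weighted by the values of $\eta_g$ and $\eta_g'$ at $\tau_\lambda$, which (after unpacking the notation $\eta_g(\lambda), \eta_g(\lambda')$ via the two choices of prime of $H$ above $\ell$ interchanged by $\tau_\wp$, together with the relation $\eta_g' = \eta_g \circ (\tau_\wp(\cdot)\tau_\wp^{-1})$ from \eqref{eqn:eta-sigma-tau}) is precisely the formula asserted in (b).

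The only step requiring genuine care, rather than mechanical matrix arithmetic, is the verification that the explicit formulas \eqref{eqn:rm-pinert-lsplit} and \eqref{eqn:rm-pinert-linert} for $A_g(\lambda)$ are correct. This reduces to unwinding the definition \eqref{eqn:def-Phi-g} of $\Phi_g$, identifying the $G$-equivariant units $u_\psi(\lambda)$ and $u_\psi'(\lambda)$ inside $\cO_H[1/\ell]^\times\otimes L$, and tracking how the normalisation $\mathrm{Trace}(A_g)=1$ interacts with the simple form of $A_g = \tfrac{1}{2}\bigl(\begin{smallmatrix}1&-1\\-1&1\end{smallmatrix}\bigr)$ displayed above; this is where the anticipated sign and the combination of $\log_p(\ell)$-diagonal with logarithmic off-diagonal entries ultimately come from. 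Once this bookkeeping is done, both assertions are one line of matrix multiplication each.
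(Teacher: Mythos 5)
Your proposal is correct and follows exactly the route the paper takes: its proof of Theorem \ref{thm:rm-case} is precisely the one-line deduction from Theorem \ref{thm:main-general-regular} together with the displayed matrices \eqref{eqn:rm-pinert-lsplit} and \eqref{eqn:rm-pinert-linert}, with the substance residing (as you note) in the prior derivation of $U_g$, $A_g$ and $A_g(\lambda)$. The only caveat is that a literal trace computation from \eqref{eqn:rm-pinert-linert} produces an overall factor of $\tfrac{1}{2}$ absent from the stated formula in part (b); this normalisation discrepancy is already present in the paper itself and is not introduced by your argument.
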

\begin{proof}
This follows directly from  Theorem \ref{thm:main-general-regular}
in light of  \eqref{eqn:rm-pinert-lsplit}
and
\eqref{eqn:rm-pinert-linert}.
\end{proof}

\begin{remark}
As already remarked in \cite{DLR3},
Theorem
\ref{thm:rm-case} above (and also Theorem
\ref{thm-RMirreg} of Part B) display a striking analogy
with Theorem 1.1.~of \cite{duke-li} concerning the fourier expansions of mock modular forms
whose shadows are   weight one theta series  attached to characters of imaginary quadratic fields. 
The underlying philosophy is that 
 the $p$-adic deformations considered in this paper behave somewhat 
like mock modular forms of weight one,  ``with $\infty$ replaced by $p$". This explains
why 
the analogy remains compelling when 
the quadratic imaginary fields of \cite{duke-li} are  replaced by real quadratic fields in which $p$
is inert (these fields being  ``imaginary" from a $p$-adic perspective).
\end{remark}

We illustrate Theorem  \ref{thm:rm-case} 
on the form of smallest level whose associated
 Artin representation is induced from a character
 of a real quadratic field, but of  no imaginary quadratic field. The projective image in this example is the dihedral group  $D_8$ of order $8$:
\begin{example}
Let $\chi = \chi_5 \chi_{29}$ where $\chi_5$ and $\chi_{29}$ are quadratic and quartic characters of conductor $5$ and
$29$, respectively. Then $S_1(145,\chi)$ is one dimensional and spanned by the eigenform
\[ g = q + i q^4 - i q^5 + i q^9 + (-i - 1) q^{11} - q^{16} + (-i - 1) q^{19} + \cdots .\]
The form $g$ is induced from a quartic character of a ray class group of $K = \Q(\sqrt{5})$ (see \cite[Example 4.1]{DLR1} for a further discussion on this form). 
The relevant ring class field $H$ is
\[ H = \Q(\alpha) \mbox{ where }  \alpha^8 - 2 \alpha^7 + 4 \alpha^6 - 26 \alpha^5 + 94 \alpha^4 - 212 \alpha^3 + 761 \alpha^2 - 700 \alpha + 980  = 0. \]
Write $\Gal(H/K) = \langle \psi \rangle$. Take $p = 13$, and note that $\chi(p) = 1$ and so $\alpha = i$ and $\beta= -i$. We compute $g^\prime_{\pm i}$ to
$10$ digits of $13$-adic precision (and $q$-adic precision $q^{28,000}$).

Consider first the prime $\ell = 7$ which is inert in $K$.
We take the $7$-unit $u(7) \in H$ to satisfy $x^4 + 13  x^3 + 38 x^2 + 5 x + 343 = 0$ and define
\[ \log_{13}(u(7,\pm i)) := \log_{13}(u(7)) \mp i \log_{13}(u(7)^\psi) \mp \log_{13}(u(7)^{\psi^2}) \pm i \log_{13}(u(7)^{\psi^3})\]
and
so $\log_{13}(u(7,i)) + \log_{13}(u(7,-i)) = \log_{13}(v)$ where $v = u(7) / u(7)^{\psi^2} \in H$. 
Then one checks that to $10$ digits of $13$-adic precision
\[ a_7(g^\prime_i) = - \frac{1}{6} \cdot  \log_{13}(v)\]
which is in line with Theorem \ref{thm:rm-case}.
Next we take the prime $\ell = 11$ which is split in $K$. Then to $10$-digits of $13$-adic precision
\[ a_{11}(g^\prime_i) =  - \frac{i + 1}{2} \cdot  \log_{13}(11)\]
exactly as predicted by Theorem \ref{thm:rm-case}.
\end{example}

\part{The irregular setting}
\label{sec:irregular}

Denote by  $S_k(Np,\chi)$ (resp.~ by $S_k^{(p)}(N,\chi)$) the space  of classical (resp.~$p$-adic overconvergent) modular forms of weight $k$, level $Np$ (resp.\,tame level $N$) and character $\chi$, with coefficients in $\Q_p$. 
 The Hecke algebra  $\TT$ of level $Np$ generated over $\Q$ by the operators $T_\ell$
 with $\ell\nmid Np$ and $U_\ell$ with $\ell \mid Np$ acts naturally on the spaces
 $S_k(Np,\chi)$ and $S_k^{(p)}(N,\chi)$. 
 
As in the introduction, let $g\in S_1(N,\chi)$ be a newform and let $g_\alpha\in S_1(Np,\chi)$ be a $p$-stabilisation of $g$. The eigenform $g_\alpha$ gives rise to a  ring 
 homomorphism $\varphi_{g_\alpha}:\TT\lra L$ to the field $L$ generated by the fourier coefficients
 of $g_\alpha$, satisfying
\begin{equation}\label{correspondence} 
 \varphi_{g_\alpha}(T_\ell) = a_\ell(g_\alpha) \mbox{ if } \ell\nmid Np,
 \qquad \varphi_{g_\alpha}(U_\ell) = \left\{
 \begin{array}{ll}  
 a_\ell(g_\alpha)  & \mbox{ if } \ell \mid N;\\
\alpha & \mbox{ if } \ell = p.
 \end{array} \right.
\end{equation}
For any  ideal $I$ of a ring  $R$ and any $R$-module $M$, 
  denote by $M[I]$  the $I$-torsion in $M$.
 Let    $I_{g_\alpha} \!\!\triangleleft  \  \TT$ be   the kernel of $\varphi_{g_\alpha}$, and set
 $$ S_1(Np,\chi)[g_\alpha] := S_1(Np,\chi)[I_{g_\alpha}], \qquad S_1(Np,\chi)[[g_\alpha]] := S_1(Np,\chi)[I_{g_\alpha}^2].
 $$
 Our main object of study is the  subspace 
$$  S_1^{(p)}(N,\chi)[[g_\alpha]] := S_1^{(p)}(N,\chi)[I_{g_\alpha}^2].$$
of the space of overconvergent $p$-adic modular forms of weight one, which is contained in 
the generalised eigenspace attached to $I_{g_\alpha}$. 
 An element of $S_1^{(p)}(N,\chi)[[g_\alpha]]$ is called an {\em overconvergent generalised
 eigenform} attached to $g_\alpha$, and it is said to be {\em classical} if it belongs to
 $S_1(Np,\chi)[[g_\alpha]]$. 
The theorem  of Bellaiche and Dimitrov stated in the opening paragraphs  of 
Part A  implies that the natural inclusion
$$ S_1(Np,\chi)[[g_\alpha]] \hookrightarrow S_1^{(p)}(N,\chi)[[g_\alpha]]$$
is an isomorphism, i.e., every overconvergent generalised eigenform is classical,
except possibly in the following cases:
\begin{itemize}
\item[(a)] $g$ is the theta series attached to  a finite order character of a real quadratic field
 in which the prime $p$ splits, or
\item[(b)] $g$ is {\em  irregular}  at $p$, i.e., $\alpha=\beta$. 
\end{itemize}
The study of $S_1^{(p)}(N,\chi)[[g_\alpha]]$ in    scenario (a)  was carried out in \cite{DLR3} when $\alpha\ne \beta$.
The main result of loc.cit.~is the description of a basis
 $(g_\alpha,g_\alpha^\flat)$ for
$S_1^{(p)}(N,\chi)[[g_\alpha]]$ which is {\em canonical up to scaling},
 and an expression for the fourier coefficients of the non-classical 
$g_\alpha^\flat$ (or rather, of their ratios)
in terms of $p$-adic  logarithms of certain  
algebraic numbers.

 Assume henceforth that $g$ is   not  regular at $p$, i.e., that $\alpha=\beta$. 
 In that case, the form $g$ admits a unique 
 $p$-stabilisation $g_\alpha=g_\beta$.
 The Hecke operators $T_\ell$ for $\ell\nmid Np$ and $U_\ell$ for $\ell\mid N$ act semisimply (i.e., as scalars) on 
 the two-dimensional vector space
 $$S_1(Np,\chi)[[g_\alpha]] = \Q_p g_\alpha \oplus \Q_p g', \qquad g'(q) := g(q^p),$$
  but the Hecke operator $U_p$ acts non-semisimply 
via the formulae
$$ U_p g_\alpha = \alpha g_\alpha, \qquad U_p g' = g_\alpha + \alpha g'.$$
 Because
 $$ (a_1(g_\alpha), a_p(g_\alpha)) = (1, \alpha), \qquad (a_1(g'), a_p(g')) = (0,1),$$
 the  classical subspace $S_1(Np,\chi)[[g_\alpha]]$
  has a natural linear complement in 
 $S_1^{(p)}(N,\chi)[[g_\alpha]]$, consisting of  the generalised eigenforms $\tilde g$ whose $q$-expansions satisfy
 \begin{equation}
 \label{eqn:normalised}
 a_1(\tilde g) = a_p(\tilde g) = 0.
 \end{equation}

 A modular form satisfying \eqref{eqn:normalised}
  is said to be {\em normalised}, and the space of
 normalised generalised eigenforms is denoted $S_1^{(p)}(N,\chi)[[g_\alpha]]_0$.
 The main goal of Part B is to study this space    and give an explicit description
 of its elements in terms of their
fourier expansions.
 The idoneous fourier coefficients will be  expressed  as  determinants of $2\times 2$
  matrices whose entries are  $p$-adic 
 logarithms of 
 algebraic numbers the number field  $H$ cut out by the projective Galois representation
 attached to $g$ (cf.~Theorems \ref{thm:exotic}, \ref{thm-CM} and \ref{thm-RMirreg}).

 \section{Generalised eigenspaces}
 \label{sec:dimension}
 
 We begin by recalling some of the notations that were already
 introduced in Part A. 
 Let 
 $$
\varrho_g: G_\Q \lra \Aut_{\Q_p}(V_g) \simeq \GL_2(\Q_p)
$$
be the odd, two-dimensional 
 Artin representation  
  associated to $g$ by Deligne and Serre
  (but viewed as having  $p$-adic rather than complex coefficients; as in Part A, we assume for simplicity that the image of $\varrho_g$ can be embedded in $\GL_2(\Q_p)$ and not just in $\GL_2(\bar\Q_p)$).
  
The four-dimensional $\Q_p$-vector space
  $ W_g:= \Ad(V_g):=\End(V_g)$  of  endomorphisms of $V_g$ is
 endowed with the  conjugation action  of $G_\Q$,
 $$\sigma \cdot M := \varrho_g(\sigma)  \circ M \circ \varrho_g(\sigma)^{-1},  \qquad \mbox{ for any } 
\sigma \in G_\Q, \quad M\in {W_g}. $$ 
Let $H$ be the field cut out by this Artin representation.
 The action  of $G_\Q$ on $W_g$ factors through a faithful action of the finite quotient $G:= \Gal(H/\Q)$. 
Let  $W_g^\circ := \Ad^0(V_g)$ denote the three-dimensional $G_\Q$-submodule of ${W_g}$  consisting of 
 trace zero endomorphisms.
 The exact sequence 
 $$ 0 \lra W_g^\circ \lra   W_g \lra \Q_p \lra 0$$
 of $G$-modules admits a canonical $G$-equivariant splitting 
 $$p:{W_g} \lra W_g^\circ, \qquad p(A) := A - 1/2 \cdot  {\rm Tr}(A).$$ 
  Because the action of $G_\Q$ on $V_g$ also factors through a finite quotient,
 the field $L \subset \Q_p$ generated by the traces of 
 $\varrho_g$ is a finite extension of $\Q$, and $\varrho_g$ maps the semisimple algebra
 $L[G_\Q]$ to a central simple algebra of rank $4$ over $L$. By eventually
  enlarging $L$, it can be assumed that
 $\varrho_g(L[G_\Q]) \simeq M_2(L)$, and therefore that $\varrho_g$ is realised on a two-dimensional
 $L$-vector space $V_g^L$ equipped with an identification $\iota: V_g^L\otimes_L \Q_p \lra V_g$. 
 The spaces 
 $$ {W}_g^L := \Ad(V_g^L), \qquad W_g^{\circ L} := \Ad^0(V_g^L)$$ 
 likewise correspond to  $G$-stable $L$-rational structures on ${W_g}$ and $W_g^\circ$ respectively, 
equipped with identifications
  $$\iota: {W}_g^L \otimes_L \Q_p \lra {W}_g, \qquad 
  \iota: W_g^{\circ L}\otimes_L \Q_p \lra W_g^\circ.$$
  
  The spaces  ${W}_g$ and $W_g^\circ$ (as well as 
  ${W}_g^L$ and $W_g^{\circ L}$)  are equipped with the Lie bracket $[\ ,\ ]$  and with a
 symetric non-degenerate pairing $\langle \ ,\ \rangle$ 
 defined by the usual rules
 $$ [A,B] := AB-BA, \qquad \langle A,B\rangle := {\rm Tr}(AB),$$
 which are compatible with the $G$-action in the sense that
 $$ [\sigma \cdot A, \sigma\cdot B] = \sigma\cdot [A,B], \qquad  \langle \sigma \cdot A, \sigma\cdot B\rangle 
 =\langle A,B\rangle,  \qquad \mbox{ for all } \sigma\in G.$$
 These operations can be combined to define a    $G$-invariant determinant function---i.e., a non-zero, alternating trilinear form---on $W_g^\circ$  and on $W_g^{\circ L}$ by setting
 $$ \det(A,B,C) := \langle[A,B],C\rangle.$$
  
The rule described in \eqref{correspondence} gives rise to natural identifications
$$
S_1(Np,\chi)[g_\alpha] \simeq \Hom(\TT/I_{g_\alpha},\Q_p), \quad S_1^{(p)}(N,\chi)[[g_\alpha]] \simeq \Hom(\TT/I_{g_\alpha}^2,\Q_p),
$$
and hence the dual of the short exact sequence
$$
0 \ra I_{g_\alpha}/I_{g_\alpha}^2 \ra  \TT/I_{g_\alpha}^2 \ra \TT/I_{g_\alpha} \ra 0
$$
can be identified with
$$ 0 \lra S_1(Np,\chi)[g_\alpha] \lra S_1^{(p)}(N,\chi)[[g_\alpha]] \lra S_1^{(p)}(N,\chi)[[g_\alpha]]_0 \lra 0.$$
In particular, one has the isomorphism
\begin{equation}\label{identification}
S_1^{(p)}(N,\chi)[[g_\alpha]]_0 \simeq \Hom(I_{g_\alpha}/I_{g_\alpha}^2,\Q_p).
\end{equation}
Let $\Q_p[\varepsilon] = \Q_p[x]/(x^2)$ denote the ring of dual numbers.
Given $ g^\flat \in S_1^{(p)}(N,\chi)[[g_\alpha]]_0$, the modular form
 $ {\tilde g} := g_\alpha + \varepsilon \cdot g^\flat$ is an 
  eigenform for $\TT$ with coefficients in
 $\Q_p[\varepsilon]$. Its associated Galois representation
$$
 \varrho_{\tilde g}: G_\Q \lra \GL_2(\Q_p[\varepsilon])
$$
satisfies
\begin{enumerate}
\item[(i)] $ \varrho_{\tilde g } = \varrho_g \pmod{\varepsilon}$ and $\det( \varrho_{\tilde g})  = \chi$,
\item[(ii)] for every prime number $\ell \nmid Np$, the trace of an arithmetic Frobenius 
$\tau_\ell$ at $\ell$ is 
\begin{equation}\label{trace}
\Tr( \varrho_{\tilde g}(\tau_\ell)) = a_\ell(g_\alpha) + \varepsilon\cdot a_\ell(g^\flat).
\end{equation}
\end{enumerate}

\begin{conjecture}\label{conj:main}
 Assume that $g$ is irregular at $p$. Then 
  the assignment $g^\flat\mapsto \varrho_{\tilde g}$ gives rise to a canonical isomorphism
 between $S_1^{(p)}(N,\chi)[[g_\alpha]]_0$ and the space $\mathrm{Def}^0(\varrho_g)$ of
 isomorphism  classes of  deformations of $\varrho_g$
   to the ring of dual numbers, with constant determinant. 
 \end{conjecture}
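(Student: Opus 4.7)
The plan is to decompose the conjecture into three steps: well-definedness of the map $\Psi: g^\flat \mapsto \varrho_{\tilde g}$, injectivity, and a matching dimension count. For well-definedness, the dual-number eigenform $\tilde g = g_\alpha + \varepsilon g^\flat$ has coefficients in the Artinian $\Q_p$-algebra $\Q_p[\varepsilon]$, and one attaches a Galois representation to it by specialising the universal pseudocharacter on the completed local ring of the cuspidal eigencurve at $g_\alpha$ along $\TT \to \TT/I_{g_\alpha}^2 \to \Q_p[\varepsilon]$. Since the residual representation $\varrho_g$ is absolutely irreducible, a theorem of Nyssen and Rouquier promotes this pseudocharacter to a genuine two-dimensional representation $\varrho_{\tilde g}$ lifting $\varrho_g$. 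The determinant equals the fixed nebentypus $\chi$---so that $\Psi$ lands in $\mathrm{Def}^0(\varrho_g)$---because the entire generalised eigenspace $S_1^{(p)}(N,\chi)[[g_\alpha]]$ is $\chi$-isotypic for the diamond operators.

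Injectivity follows from Chebotarev density: two elements $g_1^\flat, g_2^\flat \in S_1^{(p)}(N,\chi)[[g_\alpha]]_0$ whose image deformations are isomorphic have equal traces at $\tau_\ell$ for every prime $\ell \nmid Np$, and hence equal fourier coefficients at such $\ell$ by (\ref{trace}). The generalised eigenform relations $T_n g^\flat = a_n(g_\alpha) g^\flat + a_n(g^\flat) g_\alpha$ express all remaining fourier coefficients in terms of these, and together with the normalisations $a_1(g^\flat) = a_p(g^\flat) = 0$ they force $g_1^\flat = g_2^\flat$.

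Matching dimensions on the two sides then reduces to showing that $\dim_{\Q_p} S_1^{(p)}(N,\chi)[[g_\alpha]]_0 = \dim_{\Q_p} \mathrm{Def}^0(\varrho_g) = 2$. The Galois side is accessible: since $\varrho_g$ is irreducible one has $H^0(\Q, W_g^\circ) = 0$, and since the cyclotomic character has infinite image $H^2(\Q, W_g^\circ) \cong H^0(\Q, W_g^\circ(1))^\vee = 0$ as well, so Tate's global Euler characteristic formula yields $\dim H^1(\Q, W_g^\circ) = \dim (W_g^\circ)^{c=-1} = 2$; combined with the vanishing of $H^0$ this gives $\dim_{\Q_p} \mathrm{Def}^0(\varrho_g) = 2$.

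The main obstacle is to exhibit two linearly independent non-classical overconvergent generalised eigenforms attached to $g_\alpha$, thereby showing $\dim_{\Q_p} S_1^{(p)}(N,\chi)[[g_\alpha]]_0 \geq 2$---injectivity will then pinch equality. The irregularity hypothesis $\alpha = \beta$ means $\tau_\wp$ acts on $V_g$ as a scalar, every line in $V_g$ is $G_{\Q_p}$-stable, and there is no preferred ordinary filtration; consequently the Bellaiche--Dimitrov strategy (which exploits the unique ordinary line in the regular setting) breaks down, and one must produce \emph{genuinely non-ordinary} first-order deformations on the automorphic side. My plan is to construct these explicitly as normalised generalised eigenforms whose fourier coefficients are $2 \times 2$ determinants of $p$-adic logarithms of units and $\ell$-units in the field $H$ cut out by $\varrho_g$, paralleling the regular case formulae of Theorem \ref{thm:main-general-regular}, and to verify overconvergence and the generalised eigenform property by a term-by-term $q$-expansion analysis case by case (exotic, CM, RM). Matching these candidate eigenforms against the two-dimensional space $\mathrm{Def}^0(\varrho_g)$ through the trace formula (\ref{trace}) will then yield the surjectivity and complete the proof.
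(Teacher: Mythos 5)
The statement you are trying to prove is stated in the paper as Conjecture \ref{conj:main}, not as a theorem: the paper offers no proof of it, and everything downstream (Proposition \ref{prop-S02d}, Theorem \ref{thm:exotic}, Theorems \ref{thm-CM} and \ref{thm-RMirreg}) is explicitly conditional on it. Your proposal does not close the gap either. The parts you actually carry out are the routine ones: well-definedness of $g^\flat\mapsto\varrho_{\tilde g}$ via pseudocharacters and Nyssen--Rouquier, injectivity via Chebotarev, and the Galois-side count $\dim_{\Q_p}H^1(\Q,W_g^\circ)=2$ (which the paper obtains by inflation--restriction, class field theory for $H$ and the Dirichlet unit theorem rather than by the Euler characteristic formula; note also that your vanishing of $H^2$ ``by duality'' is misstated --- global duality controls $\Sha^2$, not $H^2$ itself, so one must also kill the local $H^2$'s, which is why the paper's route through $\hom_G\bigl((\cO_H\otimes\Z_p)^\times/(\cO_H^\times\otimes\Z_p),\,W_g^\circ\bigr)$ is the cleaner one).

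The genuine gap is the step you yourself flag as ``the main obstacle'': exhibiting a two-dimensional space of non-classical overconvergent generalised eigenforms at the irregular point, i.e.\ the surjectivity of $g^\flat\mapsto\varrho_{\tilde g}$. This is precisely the open content of the conjecture. Your plan to build the forms from explicit $q$-expansions whose coefficients are $2\times 2$ determinants of $p$-adic logarithms, and then ``verify overconvergence \ldots{} by a term-by-term $q$-expansion analysis,'' is not viable: there is no criterion that certifies a $q$-series as an overconvergent modular form by inspecting its coefficients, and the regular-case machinery (Bellaiche--Dimitrov) rests on the local geometry of the eigencurve --- smoothness and \'etaleness of the weight map --- which is exactly what is not understood at irregular classical weight one points. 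Worse, the candidate $q$-expansions you propose to use are those of Theorem \ref{thm:exotic}, which the paper \emph{derives from} Conjecture \ref{conj:main}; taking them as the starting point of a proof of the conjecture is circular. Absent a genuinely new input (e.g.\ an $R=\TT$ statement for the non-ordinary deformation problem at weight one, or a direct analysis of $I_{g_\alpha}/I_{g_\alpha}^2$ in the weight one overconvergent Hecke algebra), the statement must remain a conjecture, which is how the paper treats it.
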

 We now derive some consequences of this conjecture.  
 
 \begin{proposition}\label{prop-S02d}
 \label{prop:partB}
 Assume Conjecture \ref{conj:main}. If $g$ is irregular at $p$, then the space
 $S_1^{(p)}(N,\chi)[[g_\alpha]]_0$ is two-dimensional over $\Q_p$.
 \end{proposition}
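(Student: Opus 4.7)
The plan is to apply Conjecture \ref{conj:main} to reduce the problem to a Galois-cohomological calculation. By the conjecture, $S_1^{(p)}(N,\chi)[[g_\alpha]]_0$ is canonically isomorphic to $\mathrm{Def}^0(\varrho_g)$, the space of isomorphism classes of deformations of $\varrho_g$ to $\Q_p[\varepsilon]/(\varepsilon^2)$ with constant determinant. It therefore suffices to establish that $\dim_{\Q_p}\mathrm{Def}^0(\varrho_g) = 2$.

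Following the deformation-theoretic framework used in the proof of Theorem \ref{thm:main-general-regular}, any such deformation can be written in the form $\tilde\varrho_g = (1 + \varepsilon \cdot \kappa_g)\varrho_g$, where $\kappa_g: G_\Q \to W_g^\circ$ is a continuous $1$-cocycle taking values in $W_g^\circ$ (rather than the full $W_g$) precisely because the determinant is held constant.  Passing to isomorphism classes identifies $\mathrm{Def}^0(\varrho_g)$ with $H^1(G_{\Q,S}, W_g^\circ)$, where $S := \{\ell : \ell \mid Np\} \cup \{\infty\}$ collects the primes of ramification of $\varrho_g$ together with $p$ and the archimedean place.

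To compute this cohomology group, I would invoke Tate's global Euler characteristic formula. Since $\varrho_g$ is odd, $V_g^{c=1}$ is one-dimensional, and a direct calculation on the conjugation action of $c$ on $2\times 2$ matrices shows that $(W_g^\circ)^{c=1}$ is also one-dimensional. Hence
\[ \chi(G_{\Q,S}, W_g^\circ) \;=\; -\dim_{\Q_p} W_g^\circ + \dim_{\Q_p}(W_g^\circ)^{c=1} \;=\; -3 + 1 \;=\; -2. \]
Moreover, the irreducibility of $\varrho_g$ forces $H^0(G_{\Q,S}, W_g^\circ) = 0$, while $H^2(G_{\Q,S}, W_g^\circ) \simeq H^0(G_{\Q,S}, W_g^\circ(1))^\vee$ by Poitou--Tate duality vanishes because $W_g^\circ$ has finite Galois image whereas $\chi_{\cyc}$ has infinite order on $G_{\Q,S}$. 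Plugging into $\chi = h^0 - h^1 + h^2$ then yields $\dim_{\Q_p} H^1(G_{\Q,S}, W_g^\circ) = 2$.

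The main subtlety to address is whether the space $\mathrm{Def}^0(\varrho_g)$ appearing in Conjecture \ref{conj:main} carries any further local condition at $p$, e.g., a $p$-ordinarity constraint analogous to the one responsible for the dimension dropping to $1$ in the regular case (per the Bellaiche--Dimitrov theorem recalled in Part A). The irregular hypothesis resolves this cleanly: since $\alpha = \beta$, one has $\varrho_g(\tau_\wp) = \alpha \cdot \mathrm{Id}$, so $\varrho_g|_{G_{\Q_p}}$ is scalar and acts trivially on $W_g^\circ$ by conjugation. Consequently, any first-order deformation of $\varrho_g$ admits an $\alpha$-ordinary filtration at $p$ automatically, the $p$-local condition becomes vacuous, and the dimension count above gives precisely $\dim_{\Q_p} S_1^{(p)}(N,\chi)[[g_\alpha]]_0 = 2$.
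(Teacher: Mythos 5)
Your reduction of the statement to computing $\dim_{\Q_p}H^1(\Q,W_g^\circ)$ coincides with the paper's first step, but from there you take a genuinely different route. The paper does not use the Euler characteristic formula: it applies inflation--restriction to $G_H$, identifies $\hom(G_H,W_g^\circ)^G$ with $\hom_G\bigl((\cO_H\otimes\Z_p)^\times/(\cO_H^\times\otimes\Z_p),\,W_g^\circ\bigr)$ by class field theory, and then counts dimensions directly from the $G$-module structure of $H\otimes\Q_p$ (the regular representation, since irregularity forces $p$ to split completely in $H$) and of $\cO_H^\times\otimes\Q_p$ (Dirichlet's unit theorem), getting $3-1=2$ by Frobenius reciprocity. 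Your inputs $\chi(G_{\Q,S},W_g^\circ)=-\dim W_g^\circ+\dim(W_g^\circ)^{c=1}=-3+1=-2$ and $H^0(G_{\Q,S},W_g^\circ)=0$ (Schur) are correct, and your closing worry about an ordinarity condition at $p$ is moot, since Conjecture \ref{conj:main} as stated imposes only constancy of the determinant and no local condition.

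There is, however, a genuine gap at the $H^2$ step, which is the crux of your argument. The asserted isomorphism $H^2(G_{\Q,S},M)\simeq H^0(G_{\Q,S},M^*(1))^\vee$ is not what Poitou--Tate duality says. The relevant piece of the nine-term sequence is
$$0\lra \Sha^2(W_g^\circ)\lra H^2(G_{\Q,S},W_g^\circ)\lra \bigoplus_{v\in S}H^2(G_v,W_g^\circ)\lra H^0(G_{\Q,S},W_g^{\circ*}(1))^\vee\lra 0,$$
together with $\Sha^2(W_g^\circ)\simeq \Sha^1(W_g^{\circ*}(1))^\vee$. So the vanishing of $H^2$ requires two inputs: (i) $H^0(G_v,W_g^{\circ}(1))=0$ for every finite $v\in S$, which does follow from your observation (finite image of $W_g^\circ$ versus infinite order of $\chi_{\cyc}$, applied place by place), and (ii) $\Sha^1(W_g^{\circ}(1))=0$, which your argument does not address at all. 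Statement (ii) is true here --- by inflation--restriction to $G_H$ it reduces to the fact that an everywhere-locally-trivial class in $H^1(H,\Q_p(1))$ is accounted for by $\Cl(H)\otimes\Q_p=0$ --- but it is a nontrivial global input that must be supplied; without it the Euler characteristic only yields $h^1=2+h^2\ge 2$. Supplying (i) and (ii) would make your proof complete and essentially equivalent in strength to the paper's, at the cost of invoking heavier machinery than the paper's elementary unit-theoretic count.
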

 \begin{proof}
Since  any $\tilde\varrho \in \mathrm{Def}^0(\varrho_g)$ has constant determinant, it may be written as
\begin{equation}
\label{c}
 \tilde\varrho = (1+   \varepsilon \cdot c)  \cdot \varrho_g \quad \mbox{ for some  } \quad c=c(\tilde\varrho): G_{\Q} \lra  W_g^\circ.
\end{equation}
The multiplicativity of  $ \tilde\varrho$ 
implies that the function $c$ is a
$1$-cocycle of $G_{\Q}$ with values in $W_g^\circ$, whose class in $H^1(\Q,W_g^\circ)$
(which shall be  denoted with the same symbol, by a slight abuse of notation) depends only
on the isomorphism class of $ \tilde\varrho$. The 
assignment $ \tilde\varrho \mapsto c( \tilde\varrho)$ realises an isomorphism  (cf.\,for instance \cite[\S 1.2]{Ma})
\begin{equation*}
\mathrm{Def}^0(\varrho_g) \lra H^1(\Q, W_g^\circ).
\end{equation*}
 Under Conjecture \ref{conj:main}, this yields an isomorphism
 \begin{equation}
\label{g-c}
 S_1^{(p)}(N,\chi)[[g_\alpha]]_0 \stackrel{\sim}{\lra} H^1(\Q, W_g^\circ), \quad g^\flat \mapsto c_{g^\flat}.
 \end{equation}
 
 The inflation-restriction sequence combined with global class field theory for $H$ 
 now gives rise to a series of identifications
\begin{eqnarray} 
\nonumber
 H^1(\Q,W_g^\circ)  & \stackrel{\res_H}{\lra} &  \hom(G_H, W_g^\circ)^G \\
 \nonumber
 &=& \hom_G\left( \frac{(\cO_H\otimes \Z_p)^\times}{\cO_H^\times\otimes\Z_p}, W_g^\circ\right)  \\
 \nonumber
 &=&  \hom_G\left( \frac{H_p}{U}, W_g^\circ\right)  \\ 
  \label{eqn:from-H1-to-hom}
 &=& \ker\left( \hom_G(H_p,W_g^\circ) \stackrel{\res_U}{\lra} \hom_G(U,W_g^\circ) \right),
 \end{eqnarray}
 where $U$ denotes the natural 
 image of $\cO_H^\times\otimes \Z_p$ in $H_p := H\otimes \Q_p$ under the 
 $p$-adic logarithm
 map 
 $$ \log_p: H_p^\times \lra H_p.$$ 
 As representations for $G$, the space  $H_p$ is isomorphic to the regular representation
 $$ H_p \simeq \Ind_1^G \Q_p,  $$
  while  $U$, by the Dirichlet unit theorem, 
    is induced from the trivial representation of the subroup
  $G_\infty\subset G$ generated by a complex conjugation:
$$ U\simeq \Ind_{G_\infty}^G \Q_p.$$
 Complex conjugation acts on $W_g^\circ$ with eigenvalues $1$, $-1$ and$-1$, and hence
 by  Frobenius reciprocity, 
 \begin{equation}
 \label{eqn:dirichlet-unit-thm}
  \dim_{\Q_p} \hom_G(H_p,W_g^\circ) = 3, \qquad \dim_{\Q_p}\hom_G(U,W_g^\circ)=1.
  \end{equation}
 It follows from  
  \eqref{eqn:from-H1-to-hom}
 that $H^1(\Q,W_g^\circ)$ is two-dimensional over $\Q_p$.
 Proposition \ref{prop:partB}   follows.
 \end{proof}

 For any $\ell\nmid Np$, the $\ell$-th fourier coefficient of $g^\flat$ is given in terms of the associated cocycle 
 $c_{g^\flat}$ by the rule
 \begin{equation}
 \label{eqn:formula-for-al}
  a_\ell(g^\flat) =  
  {\rm Tr}(c_{g^\flat}(\sigma_\lambda) \varrho_g(\sigma_\lambda))
  \end{equation}
  where $\lambda|\ell$ is any prime above $\ell$ and $\sigma_\lambda$ denotes the arithmetic Frobenius associated to it. Note that the right-hand side of \eqref{eqn:formula-for-al} does not depend on the choice of $\lambda$.

 Our next goal is to  parametrise the  elements 
of \eqref{eqn:from-H1-to-hom}  explicitly, and then to derive concrete formulae for the 
fourier expansions of
the associated modular forms in  $S_1^{(p)}(N,\chi)[[g_\alpha]]_0$ via  \eqref{g-c} 
and \eqref{eqn:formula-for-al}.
After treating the general case in  Section \ref{sec:exotic},
Sections   \ref{sec:cm} and \ref{sec:rm}  focus on the special features of the scenarios where 
$W_g^\circ$ is reducible, i.e.,
 \begin{enumerate}
 \item[(i)] the CM case where $V_g$ is induced from a character of an imaginary quadratic field;
 \item[(ii)] the RM case where $V_g$ is induced from a character of a real quadratic field.
  \end{enumerate}

\section{The general case}
\label{sec:exotic}
The
  Galois representation $W_g^\circ$ is irreducible if and only if 
 $G:= \Gal(H/\Q)$ is  isomorphic to   $A_4$, $S_4$, or $A_5$. Otherwise, the representation $\varrho_g$ has dihedral projective image and $G$ is isomorphic to a dihedral group.
 
The irregularity assumption implies that the prime $p$ splits completely in $H$, and  
$H$ can therefore be viewed as a subfield of $\Q_p$ after fixing an embedding
 $H\hookrightarrow \Q_p$ once and for all. This amounts to choosing a prime $\wp$ of $H$ above $p$.
 Let $\log_{\wp}: H_p^\times \lra \Q_p$ denote the associated ${\wp}$-adic logarithm map, which factors through 
 $\log_p$.

 The Dirichlet unit theorem implies (via  the second equation in \eqref{eqn:dirichlet-unit-thm})
 that
 $$ \dim_L (\cO_H^\times \otimes W_g^{\circ L})^G = 1.$$ 
 In particular, for all $\fu\in \cO_H^\times $ and all $w\in W_g^{\circ L}$, the element
 \begin{equation}
 \label{eqn:xiuw}
  \xi(\fu,w) := \frac{1}{\# G} \times \sum_{\sigma\in G} (\sigma \fu) \otimes (\sigma\cdot w) \in (\cO_H^\times \otimes W_g^{\circ L})^G
  \end{equation}
 only depends on the choices of $\fu$ and $w$ up to scaling by a (possibly zero) factor in $L$.
 As $\fu$ varies over $\cO_H^\times$ and $w$ over $W_g^{\circ L}$,  
 the elements
 \begin{equation}
 \label{eqn:xiuw-p}
  \xi_{\wp}(\fu,w) := ( \log_{\wp}\otimes {\rm id}) \xi(\fu,w) = 
   \frac{1}{\# G} \times \sum_{\sigma\in G} \log_{\wp}(\sigma \fu)  \cdot (\sigma\cdot w) \in  W_g^\circ 
  \end{equation}
 therefore
  lie in a one-dimensional $L$-vector subspace of $W_g^\circ$. 
  Choose a generator 
$w(1)$ for this space.
The coordinates of $w(1)$ relative to a basis  $(e_1, e_2,e_3)$
 for $W_g^{\circ L}$ are $\wp$-adic logarithms of units in $\cO_H$, namely, we can write
\begin{equation}
\label{eqn:coord-u}
 w(1) = \log_{\wp}(\fu_1) e_1 + \log_{\wp}(\fu_2)  e_2 + \log_{\wp}(\fu_3) e_3,
 \end{equation}
 for appropriate $\fu_i\in (\cO_H^\times)\otimes_{\Z} L$.
  
Let $\ell\nmid N p$ be a rational prime.  
For   any prime 
$\lambda$ of $H$ above $\ell$,  
let $\tilde\fu_\lambda$ be a generator of the principal ideal $\lambda^h$, 
where $h$ is the class number of $H$, and set 
$$ \fu_\lambda := {\tilde\fu}_\lambda\otimes h^{-1} \in (\cO_H[1/\ell]^\times)\otimes_\Z L.$$
 Let 
\begin{equation}
\label{eqn:def-w-lambda}
 \tilde w_\lambda:=   \varrho_g(\sigma_\lambda) \in {W_g^L}, \qquad
w_\lambda := p({\tilde w_\lambda}) \in W_g^{\circ L}
\end{equation}
be the  endomorphisms of $V_g$
arising from the 
image of  $\sigma_\lambda$ under $\varrho_g$. 
The element $\fu_\lambda$ is well-defined up to multiplication by elements of $\cO_H^\times$,
and hence the elements 
\begin{eqnarray}
\label{eqn:xiuw-bis}
  \xi(\fu_\lambda,w_\lambda) &:=&   \frac{1}{\# G} \times \sum_{\sigma\in G} (\sigma \fu_\lambda) \otimes (\sigma\cdot w_\lambda) \in (\cO_H[1/\ell]^\times \otimes W_g^{\circ L})^G, \\
  \nonumber
 w(\ell) =  \xi_{\wp}(\fu_\lambda,w_\lambda) & := & 
 \frac{1}{\# G} \times \sum_{\sigma\in G} \log_{\wp}(\sigma \fu_\lambda)\cdot (\sigma\cdot w_\lambda) \in   W_g^\circ
 \end{eqnarray}
 are   defined up to  translation by  elements of the one-dimensional $L$-vector spaces
 $(\cO_H^\times\otimes W_g^{\circ L})^G$ and $L \cdot w(1)$ respectively.
 Furthermore, the image of $w(\ell)$ in the quotient $W_g^\circ/(L \cdot w(1))$ does not depend
 on the choice of the prime $\lambda$ of $H$ above $\ell$ that was made to define it. 
The  Lie bracket
 $$ \fw(\ell) := [w(1),w(\ell)] \in W_g^\circ$$
 is thus independent of  the choices that were made in  defining $w(\ell)$.
  
  \begin{remark}
The coordinates of $w(\ell)$ relative to a basis  $(e_1, e_2,e_3)$
 for $W_g^{\circ L}$ are $\wp$-adic logarithms of $\ell$-units in $H$, i.e., one can write
\begin{equation}
\label{eqn:coord-v}
 w(\ell) = \log_{\wp}(\fv_1) e_1 + \log_{\wp}(\fv_2)e_2 + \log_{\wp}(\fv_3) e_3, 
 \end{equation}
 with $\fv_i\in (\cO_H[1/\ell]^\times)_L$  for $i =1,2,3$.
 A direct computation shows that
$$  
 \dim_{\Q_p} (W_g^\circ)^{\sigma_\lambda=1} = \left\{ \begin{array}{cl}  1 & \mbox{ if  $g$ is regular at $\ell$};   \\
                 3 & \mbox{ if $g$ is irregular at $\ell$}.   \end{array}
                 \right. 
        $$
It follows that for all regular primes $\ell$,
$$  \dim_L(\cO_H[1/\ell]^\times \otimes W_g^{\circ L})^G = 2, $$
and therefore that the element $\xi(\fv,w)$ attached to any pair 
$(\fv,w) \in \cO[1/\ell]^\times \times W_g^{\circ L }$ as in 
\eqref{eqn:xiuw-bis} 
is well-defined up to scaling by $L$ and up to translation by elements
of the one-dimensional space $(\cO_H^\times\otimes W_g^{\circ L})^G$.  
In particular, the associated vector $\fw(\ell)$ lies in a canonical one-dimensional subspace
of $W_g^\circ$, namely, the orthogonal complement in $W_g^{\circ}$ of 
$$(\log_{\wp}\otimes {\rm Id})(\cO_H[1/\ell]^\times\otimes W_g^{\circ})^G \subset W_g^{\circ}.$$  
If
 the basis $(e_1e_2,e_3)$ for $W_g^{\circ L}$  in 
\eqref{eqn:coord-u} and \eqref{eqn:coord-v} is taken to be the standard basis
$$ 
e_1 = \left(\begin{array}{cc}  1 & 0 \\ 0 & -1 \end{array}\right), \qquad
e_2 = \left(\begin{array}{cc}  0 & 1 \\ 0 & 0 \end{array}\right), \qquad
e_3 = \left(\begin{array}{cc}  0 & 0 \\ 1 & 0 \end{array}\right), 
$$
then 
$$ \fw(\ell)  \   \ =\   \ \det\left(\begin{array}{cc} \fu_2 & \fu_3 \\ \fv_2 & \fv_3 \end{array}\right) \cdot e_1 
 \ + \ 2 \det \left(\begin{array}{cc} \fu_1 & \fu_2 \\ \fv_1 & \fv_2 \end{array}\right) \cdot e_2  
\ - \ 2 \det \left(\begin{array}{cc} \fu_1 & \fu_3 \\ \fv_1 & \fv_3 \end{array}\right) \cdot e_3.$$
  \end{remark}
  
\begin{remark}
Observe that if the prime $\ell$ is irregular for $g$, the vector $\tilde w_\lambda$ is a scalar 
endomorphism in ${W_g}$ and hence $w_\lambda =  w(\ell) = \fw(\ell) =0$. 
\end{remark}

Our main result is 
 \begin{theorem}
 \label{thm:exotic}
 Assume Conjecture \ref{conj:main}.
 For all $w\in W_g^\circ$,  there exists an overconvergent generalised eigenform $g_w^\flat \in 
 S_1^{(p)}(N,\chi)[[g_\alpha]]_0$  satisfying
 $$ a_\ell(g_w^\flat) =   \langle  w, \fw(\ell)  \rangle = \det(w,w(1),w(\ell)),$$
 for all primes $\ell\nmid Np$. 
 The assignment $w\mapsto g_w^\flat$ induces 
 an isomorphism between $W_g^\circ/U$ and $S_1^{(p)}(N,\chi)[[g_\alpha]]_0$.
  \end{theorem}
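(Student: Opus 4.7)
\emph{Proof plan.} The strategy is to use Conjecture \ref{conj:main} and the identifications developed in the proof of Proposition \ref{prop-S02d} to translate the claim into an explicit construction of $G$-equivariant homomorphisms $H_p/U \to W_g^\circ$. Recall that Conjecture \ref{conj:main}, combined with the chain of identifications \eqref{eqn:from-H1-to-hom}, yields an isomorphism
$$
S_1^{(p)}(N,\chi)[[g_\alpha]]_0 \;\xrightarrow{\sim}\; \ker\bigl(\hom_G(H_p, W_g^\circ) \xrightarrow{\res_U} \hom_G(U, W_g^\circ)\bigr),
$$
under which the formula \eqref{eqn:formula-for-al} becomes, after invoking global class field theory for $H$: if $\Phi:H_p/U\to W_g^\circ$ is the $G$-hom attached to $g^\flat$, then $a_\ell(g^\flat)=\Tr(\Phi(\fu_\lambda)\cdot\varrho_g(\sigma_\lambda))$, where $\fu_\lambda$ is the $\lambda$-unit defined just before the theorem statement.

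To each $w\in W_g^\circ$ the plan is to associate the auxiliary element $w':=[w(1),w]$ and the map $\Phi_w:H_p\to W_g^\circ$ defined by
$$
\Phi_w(x) := \frac{1}{\# G}\sum_{\sigma\in G}\log_\wp(\sigma^{-1}x)\cdot(\sigma\cdot w').
$$
A routine relabelling $\sigma\mapsto\tau^{-1}\sigma$ shows $\Phi_w$ is $G$-equivariant. The crucial step is to verify that $\Phi_w$ vanishes on $U$, so that it descends to a well-defined class in $H^1(\Q,W_g^\circ)$. For this, one computes for $u\in\cO_H^\times$ and $v\in W_g^\circ$
$$
\langle\Phi_w(u),v\rangle \;=\; \bigl\langle w',\; \textstyle\sum_\sigma\log_\wp(\sigma u)(\sigma\cdot v)\bigr\rangle
$$
by moving the $G$-action across the trace pairing. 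By the one-dimensionality of $(\cO_H^\times\otimes W_g^{\circ L})^G$ established in the proof of Proposition \ref{prop-S02d}, the inner sum lies in the line $L\cdot w(1)\subset W_g^\circ$. Hence $\langle\Phi_w(u),v\rangle$ is a scalar multiple of $\langle w',w(1)\rangle=\langle[w(1),w],w(1)\rangle=0$, proving the claim.

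Having produced a class $[\Phi_w]\in H^1(\Q,W_g^\circ)$, define $g_w^\flat$ to be the overconvergent generalised eigenform attached to it by Conjecture \ref{conj:main}. Its $\ell$-th fourier coefficient is $\Tr(\Phi_w(\fu_\lambda)\cdot\varrho_g(\sigma_\lambda))$; repeating the manipulation above, and using that $w'$ has trace zero so that the scalar part of the decomposition $\varrho_g(\sigma_\lambda)=w_\lambda+\frac{1}{2}\Tr(\varrho_g(\sigma_\lambda))\cdot I$ pairs trivially against $w'$, this trace simplifies to
$$\Tr(w'\cdot w(\ell))=\Tr([w(1),w]\cdot w(\ell))=\det(w,w(1),w(\ell)),$$
yielding the desired formula. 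Finally, the $\Q_p$-linear map $w\mapsto g_w^\flat$ has kernel equal to the centraliser of $w(1)$ in $W_g^\circ\simeq\mathfrak{sl}_2$, which is the line $L\cdot w(1)$; this line is precisely the $\wp$-adic logarithm image of the unit contribution, playing the role of ``$U$'' in the statement. Since $\dim(W_g^\circ/L\cdot w(1))=2=\dim S_1^{(p)}(N,\chi)[[g_\alpha]]_0$ (Proposition \ref{prop-S02d}), the induced map is an isomorphism.

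The main obstacle lies in two non-vanishing statements used above: first, that $w(1)\ne 0$, a Leopoldt-type non-triviality for the $\wp$-adic regulator on the $W_g^\circ$-isotypic component of $\cO_H^\times$; and second, that the vectors $\{w(\ell)\bmod L\cdot w(1)\}$ span $W_g^\circ/L\cdot w(1)$ as $\ell$ ranges over rational primes not dividing $Np$, needed to guarantee that the formula $a_\ell(g_w^\flat)=\det(w,w(1),w(\ell))$ actually detects $w$ modulo the kernel. Both assertions reflect standard $p$-adic transcendence expectations but will require care to justify.
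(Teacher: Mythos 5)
Your proposal follows essentially the same route as the paper: identify $W_g^\circ$ with $\hom_G(H_p,W_g^\circ)$ by averaging over $G$, observe that vanishing on $U=\log_p(\cO_H^\times)$ is equivalent to orthogonality to $w(1)$, pass to the bracket with $w(1)$ to enforce this, and compute $a_\ell$ via \eqref{eqn:formula-for-al} and class field theory. Two small remarks. First, a sign: with your convention $w'=[w(1),w]$ one gets $\Tr(w'\cdot w(\ell))=\langle [w(1),w],w(\ell)\rangle=\det(w(1),w,w(\ell))=-\det(w,w(1),w(\ell))$; the paper uses $[w,w(1)]$ precisely so as to land on $+\det(w,w(1),w(\ell))=\langle w,\fw(\ell)\rangle$. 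Second, of the two ``obstacles'' you flag at the end, only the first is genuinely needed: $w(1)\neq 0$ follows from the injectivity of $\log_\wp$ on $\cO_H^\times\otimes L$ (linear independence of logarithms of algebraic numbers, as invoked in Lemma \ref{lemma:the-magic-A} of Part A) together with Lemma \ref{lemma:unit-one-dim}/\eqref{eqn:dirichlet-unit-thm}. The spanning of the classes $w(\ell)$ modulo $L\cdot w(1)$ is not required for the isomorphism statement, because injectivity of $w\mapsto g_w^\flat$ is checked at the level of cocycles: the kernel of $w\mapsto [w,w(1)]$ is the centraliser $L\cdot w(1)$, and $\tilde\varphi$ is already an isomorphism onto $\hom_G(H_p,W_g^\circ)$, so the dimension count of Proposition \ref{prop:partB} finishes the argument without any appeal to the Fourier coefficients detecting $w$.
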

\begin{proof} 
The semi-local field
 $H_p  =  H\otimes_\Q \Q_p= \oplus_{\wp|p} \Q_p$ is naturally identified with the set of vectors
$h = (h_{\wp})_{\wp|p}$ with entries $h_{\wp}\in\Q_p$, indexed by the primes of $H$ above $p$.
The function which to $ w\in W_g^\circ$ associates the linear transformation
$$ \tilde\varphi_{w}: H_p \lra W_g^\circ, \qquad \tilde\varphi_{w}(h) = 
 \frac{1}{\# G} \times \sum_{\sigma\in G} (\sigma^{-1}h)_{\wp} \cdot (\sigma  \cdot {w}) $$
identifies $W_g^\circ$ with $\hom_G(H_p,W_g^\circ)$. 
 The linear function $\tilde\varphi_{w}$ is trivial on $U:= \log_p(\cO_H^\times)\subset H_p$ 
 if and only if, for all $\fu\in \cO_H^\times$ and all $w'\in W_g^\circ$,
 $$  \langle \tilde\varphi_{w}(\log_p(\fu)), w' \rangle = 0.$$
 But
 \begin{eqnarray*}
 \langle \tilde\varphi_{w}(\log_p(\fu)), w'\rangle &=&  \frac{1}{\# G} \times \left\langle  \sum_{\sigma\in G} \log_{\wp}(\sigma^{-1}(\fu)) \cdot (\sigma\cdot {w}),  \ w' \right\rangle \\ 
 &=&     \frac{1}{\# G} \times\sum_{\sigma\in G} \log_{\wp}(\sigma^{-1}(\fu)) \cdot  \langle \sigma\cdot {w}, w' \rangle \\ 
 &=&    \frac{1}{\# G} \times \sum_{\sigma\in G} \log_{\wp}(\sigma^{-1}(\fu)) \cdot  \langle   {w}, \sigma^{-1} \cdot w' \rangle \\
 &=& \langle {w}, \xi_{\wp}(\fu,w') \rangle,
 \end{eqnarray*}
 and hence $\tilde \varphi_{w}$  is trivial
  on $U=\log_p(\cO_H^\times)$ if and only if 
 $w$ is orthogonal in $W_g^\circ$ to the line spanned by 
 $w(1)$. It follows that the $G$-equivariant linear function 
 $$ \varphi_w := \tilde\varphi_{[w,w(1)]}: H_p \lra W_g^\circ$$
 factors through $H_p/U$. 
The assignment $w\mapsto \varphi_w$ identifies
 $W_g^\circ/ (L\cdot w(1))$ with $\hom_G( H_p/U, W_g^\circ)$, and gives an explicit description of the 
 latter space.
 
 Let $\tilde g_w = g + \varepsilon g_w^\flat$ be the eigenform with coefficients in 
 $\Q_p[\varepsilon]$ which is attached to the cocycle 
 $\varphi_w\in \hom_G(H_p/U,W_g^\circ) = H^1(\Q,W_g^\circ)$. 
Equation  \eqref{eqn:formula-for-al}  with $g^\flat = g_w^\flat$ (and hence $c_{g^\flat} = \varphi_w$) 
combined with \eqref{eqn:def-w-lambda} shows
 that the $\ell$-th  the fourier coefficient of $g_w^\flat$ 
at a prime $\ell \nmid Np$ is equal to 
\begin{equation}\label{al}
a_\ell(g_w^\flat) =      
 {\rm Tr}(\varphi_w(\sigma_\lambda) \cdot \varrho_g(\sigma_\lambda))  
=  \langle \varphi_w(\sigma_\lambda), \tilde w_\lambda\rangle 
=  \langle \varphi_w(\sigma_\lambda), w_\lambda\rangle.
\end{equation}
Class field theory for $H$ implies that
$$ \varphi_w(\sigma_\lambda) =  \frac{1}{\# G} \times \sum_{\sigma\in G} \log_{\wp}(\sigma^{-1} \fu_\lambda) \cdot 
\sigma\cdot[w,w(1)].$$  
Hence
\begin{eqnarray*}
 a_\ell(g_w^\flat) &=&     \frac{1}{\# G} \times \left\langle \sum_{\sigma\in G} \log_{\wp}(\sigma^{-1} \fu_\lambda) \cdot 
\sigma\cdot[w,w(1)],   \   w_\lambda  \right\rangle \\
&=&  \frac{1}{\# G} \times \sum_{\sigma\in G} \log_{\wp}(\sigma^{-1} \fu_\lambda) \cdot  \langle \sigma\cdot[w,w(1)],  \ w_\lambda \rangle  \\ 
&=& \frac{1}{\# G} \times \sum_{\sigma\in G} \log_{\wp}(\sigma^{-1} \fu_\lambda) \cdot  \langle [w,w(1)],   \ \sigma^{-1}\cdot w_\lambda  \rangle  \\ 
&=&  \langle [w,w(1)], w(\ell)   \rangle   = \det(w,w(1),w(\ell)) = \langle w, \fw(\ell)\rangle.
\end{eqnarray*}
The theorem follows.
\end{proof}
If $w$ in a vector in $W_g^{\circ L}$, Theorem \ref{thm:exotic}
 shows that the associated overconvergent generalised eigenform $g_w^\flat$ has
fourier coefficients which are   $L$-rational linear combinations of 
determinants of $2\times 2$  matrices whose entries are the ${\wp}$-adic logarithms of algebraic numbers 
in $H$. 
In the CM and RM cases to be discussed below, the representation $W_g^{\circ}$ is reducible and decomposes further into non-trivial irreducible representations.
In that case  the choice of an  $L$-basis for $W_g^{\circ L}$ which is compatible with this decomposition leads to  canonical elements of 
$S_1^{(p)}(N,\chi)[[g_\alpha]]_0$ which can sometimes be re-scaled so 
that their fourier expansions  admit even simpler 
expressions, as will be described in the next two sections.




 \section{CM forms}
 \label{sec:cm}
Assume that $g$ is the theta series attached to a character of a quadratic imaginary field
$K$, i.e., that
$$V_g^L = \Ind_K^\Q \psi_g,$$
where $\psi_g: \Gal(\bar K/K) \lra L^\times$ is a finite order character.
Let $\psi_g'$ denote the character deduced from $\psi_g$ by composing it 
with the involution in $\Gal(K/\Q)$.
The  irreducibility assumption on $V_g^L$ implies that  the characters $\psi_g$ and $\psi_g'$ are distinct,
and therefore the representations $V_g^L$  and $V_g$ decompose canonically as a direct
sum of two $G_K$-stable one-dimensional subspaces
$$ V_g^{L} = \cL_{\psi_g}^L \oplus \cL_{\psi_g'}^L, \qquad
 V_g  = \cL_{\psi_g}  \oplus \cL_{\psi_g'}$$
 on which $G_K$ acts via the characters $\psi_g$ and $\psi_g'$ respectively.
The representations ${W}_g^L$ and 
${W_g}$  also decompose  as   direct sums of four $G_K$-stable lines
\begin{eqnarray*}
{ W_g^L} &=& \left(\hom(\cL_{\psi_g}^L,\cL_{\psi_g}^L) \oplus 
\hom(\cL_{\psi_g'}^L,\cL_{\psi_g'}^L) \right)  \ \oplus  \ 
\left(\hom(\cL_{\psi_g'}^L,\cL_{\psi_g}^L)   \oplus
\hom(\cL_{\psi_g}^L,\cL_{\psi_g'}^L)  \right),    \\
{  W_g} &=& \left(\hom(\cL_{\psi_g},\cL_{\psi_g}) \oplus 
\hom(\cL_{\psi_g'},\cL_{\psi_g'}) \right)  \ \oplus  \ 
\left(\hom(\cL_{\psi_g'},\cL_{\psi_g})   \oplus
\hom(\cL_{\psi_g},\cL_{\psi_g'})  \right).
\end{eqnarray*}
The direct summands  in parentheses are also stable under $G_\Q$ and are isomorphic to the induced representations
$\Ind_K^\Q 1$ and $\Ind_K^\Q\psi$ respectively, where 
 $\psi :=\psi_g/\psi_g'$, is the {\em ring class character} of $K$ associated to $\psi_g$. 
  It follows that 
$$ W_g^{\circ L}  = L(\chi_K)  \oplus  Y_g^{L}, \qquad 
W_g^\circ  = \Q_p(\chi_K)  \oplus  Y_g, \qquad  Y_g^L :=  \Ind_K^\Q \psi, \quad 
Y_g := Y_g^L \otimes_L \Q_p.$$
 It will be convenient to choose 
 a basis $(e_1,e_2)\in \cL_{\psi_g}^L \times \cL_{\psi_g'}^L$ for $V_g^{L}$,  
 and to denote by $e_{11}, e_{12}, e_{21}, e_{22}$ the resulting basis of 
 ${ W}_g^L$, where $e_{ij}$ is the elementary matrix whose $(i',j')$-entry is 
 $\delta{i=i'} \delta_{j=j'}$.
 Relative to the  identification of  $W_g^{\circ L}$ with the space of $2\times 2$ matrices of trace zero with entries in $L$ via this basis, the  representation
 $L(\chi_K) = L\cdot(e_{11}-e_{22}) $ is identified with the space of diagonal matrices of trace $0$, while $Y_g^{L} = L\cdot e_{12} \oplus L\cdot e_{21}$ is identified with the 
 space of off-diagonal matrices in $M_2(L)$.  
 Fix an element $\tau \in G_\Q = G_K$ once and for all.
By eventually re-scaling $e_1$ and $e_2$, it can (and shall, henceforth) be assumed  that  
 $\varrho_g(\tau)$ is  represented by  the matrix
 $ \left( \begin{array}{cc} 0 & t  \\ t  & 0 \end{array}\right)$ 
 in this basis, where $- t^2 := \chi(\tau)$. 
 
 Let $Z:= \Gal(H/K)$ be the maximal abelian normal subgroup 
of the dihedral group $G = \Gal(H/\Q)$. Note that every element in $G-Z$ (such as  the image of $\tau$ 
  in $G$) is an involution, and that $Z$ operates transitively on $G-Z$ by either left or right multiplication.
 
The field 
  $H$ through which $W_g^\circ$ factors  is  the  ring class field of $K$  attached to the character $\psi$.
 The group $\cO_H^\times \otimes \Q$ of units of $H$ is isomorphic to the regular representation of $Z$ minus the trivial representation, and a finite index subgroup of
  $\cO_H^\times$ can be constructed explicity from the elliptic units arising in the theory of complex multiplication. 
  Let 
  $$ e_{\psi} := \frac{1}{\# Z} \sum_{\sigma\in Z} \psi^{-1}(\sigma) \sigma$$
   be the idempotent in the group ring of $Z$ giving rise to the projection onto the  
   $\psi$-isotypic component for the action of $Z$. 
   Choose a unit $\fu\in \cO_H^\times$ and let 
 $$\fu_\psi : = e_\psi \fu, 
\qquad  \tau\fu_{\psi} =   e_{\psi'} (\tau \fu) $$
  be  elements of $\cO_H^\times \otimes L $ on which $Z$ acts via the characters 
  $\psi$ and   $\psi' = \psi^{-1}$ respectively.
With these choices, we can let 
\begin{equation}
\label{eqn:unit-imag}
 w(1) = \left( \begin{array}{cc}  0   & \log_{\wp}(\fu_\psi) \\
\log_{\wp}(\tau\fu_\psi) & 0 \end{array}\right).
\end{equation}

The description of the canonical vectors $w(\ell), \fw(\ell) \in W_g^\circ$ attached to a rational prime $\ell\nmid Np$ depends in an essential way on whether   $\ell$ is split or inert in $K/\Q$.

If $\ell = \lambda\lambda'$ is split in $K$ and $\ell$ is regular for $g$, i.e.,
$\psi_g(\sigma_{\lambda}) \ne \psi_g({\lambda'})$,  then the natural map
  $$(\cO_K[1/\ell]^\times\otimes  W_g^{\circ L})^G \subset \left(\frac{\cO_H[1/\ell]^\times}{\cO_H^\times} \otimes  W_g^{\circ L}\right)^G $$ is  an isomorphism of $L$-vector spaces.

 Let ${\tilde \fu}_\lambda$ be a generator of $\lambda^h$ where $h$ is the class number of $K$, and set
$$ \fu_\lambda :=   {\tilde \fu}_\lambda \otimes h^{-1}.$$
  Since
  $$
   {\tilde w}_\lambda = \left(\begin{array}{cc} \psi_g(\lambda)  &  0 \\ 
0 &   \psi_g({\lambda'})  \end{array} \right), \qquad
  { w}_\lambda = \frac{ \psi_g(\lambda) - \psi_g({\lambda'})}{2} \times \left(\begin{array}{cc} 1  &  0 \\ 
0 & -1 \end{array} \right), $$
a direct calculation shows that 
$$ w(\ell) =   \log_{\wp}(\fu_\lambda/\fu_\lambda') \times  \frac{(\psi_g(\lambda) - \psi_g({\lambda'}))}{2} \times
\left(\begin{array}{cc} 1  &  0 \\ 
0 &  -1 \end{array} \right).$$
It follows that
\begin{equation}
\label{eqn:fwl-quad-split}
\fw(\ell) =   \log_{\wp}(\fu_\lambda/\fu_\lambda') \times (\psi_g(\lambda) - \psi_g(\lambda')) \times \left( \begin{array}{cc}  0   & -\log_{\wp}(\fu_\psi) \\
\log_{\wp}(\fu_\psi') & 0 \end{array}\right).
\end{equation}

If $\ell $ is inert in $K$ then $\ell$ is always regular for $g$ since  $\varrho_g(\tau_\ell)$ has
trace $0$ and hence has distinct eigenvalues. The prime $\ell$ splits completely in $H/K$, and hence the group $(\cO_H[1/\ell]^\times)\otimes L$ is isomorphic to two copies of the regular representation of 
$Z$ minus a  trivial representation.  
The choice of a prime $\lambda$ of $H_g$ above $\ell$ determines 
a matrix (and not just a conjugacy class) 
$$  {\tilde w}_\lambda = w_\lambda = \varrho(\sigma_\lambda) = \left( \begin{array}{cc} 0 & b_\lambda \\ c_\lambda & 0 \end{array}\right)$$
with entries in $L$. 
Let
${  \fu}_\lambda$ be an element of $(\cO_H[1/\ell]^\times/\cO_H^\times) \otimes L $ whose prime
factorisation is given by 
\begin{equation}
\label{eqn:def-u-lambda-cm}
 (\fu_\lambda) =    b_\lambda \lambda + c_\lambda (\tau\lambda).
 \end{equation}
This $\ell$-unit is only well defined by \eqref{eqn:def-u-lambda-cm}
 up to translation by $\cO_H^\times\otimes L$,  and the defining equation
 \eqref{eqn:def-u-lambda-cm} of course depends crucially on the choice of 
 the prime $\lambda$ above $\ell$. However, the $\psi$-isotypic projection
 \begin{equation}
 \fu_\psi(\ell) := e_\psi \fu_\lambda
 \end{equation}
 is independent of this choice. 
 A direct calculation shows that 
  $$ w(\ell) = \frac{1}{2}    \times 
\left(\begin{array}{cc} 0   &    \log_{\wp}( \fu_\psi(\ell))    
\\ 
  \log_{\wp}( \tau \fu_{\psi}(\ell))  
 &  0 \end{array} \right).$$
It follows that
\begin{equation}
\label{eqn:fwl-quad-inert}
\fw(\ell) =   \left( \begin{array}{cc} R_\psi(\ell) & 0 \\ 0 & - R_\psi(\ell) \end{array} \right),
 \end{equation}
where 
$$ R_\psi(\ell) = \det\left(  \begin{array}{cc}  
 \log_{\wp}(\fu_\psi)  &    \log_{\wp}(\tau\fu_\psi)     \\
  \log_{\wp}(\fu_\psi(\ell)) &  \log_{\wp}(\tau\fu_\psi(\ell))  
  \end{array} \right) $$
is an $\ell$-unit regulator attached to $\psi$, 
 which is  independent of the choice of prime $\lambda$ of $H$ above 
$\ell$. 
The function $\ell\mapsto R_\psi(\ell)$ does depend on the choice of the  
unit $\fu$, but only up to scaling by $L^\times$.
\begin{theorem}\label{thm-CM}
The space  $S_1^{(p)}(N,\chi)[[g_\alpha]]_0$ 
has a canonical basis $(g_1^\flat, g_2^\flat)$ which is characterised by
the properties:
\begin{enumerate}  
\item[(i)]
The fourier coefficients  $a_\ell(g_1^\flat)$ are $0$ for  all primes $\ell\nmid Np$ that are inert in $K$.
 If $\ell = \lambda\lambda'$ is
split in $K$, then 
$$a_\ell(g_1^\flat) =  (\psi_g(\lambda) - \psi_g(\lambda'))  \times
  \log_{\wp}(\fu_\lambda/\fu_\lambda')   $$ 
is a simple algebraic multiple of the $p$-adic logarithm of the fundamental $\ell$-unit of norm 
$1$ in $K$.
\item[(ii)]
The fourier coefficients of $g_2^\flat$ are $0$ at all the primes $\ell\nmid Np$ that are split in $K$. 
If $\ell$ is inert in $K$ ,  then
$$ a_\ell(g_s^\flat) = R_\psi(\ell).$$
\end{enumerate}
\end{theorem}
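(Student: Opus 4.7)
The strategy is to deduce Theorem \ref{thm-CM} from Theorem \ref{thm:exotic} by exploiting the canonical decomposition $W_g^\circ = \Q_p(\chi_K) \oplus Y_g$ induced by the CM structure. Recall from Theorem \ref{thm:exotic} that the assignment $w \mapsto g_w^\flat$ realises an isomorphism $W_g^\circ/(L\cdot w(1)) \simeq S_1^{(p)}(N,\chi)[[g_\alpha]]_0$ whose Fourier coefficients at primes $\ell \nmid Np$ are given by the pairing $a_\ell(g_w^\flat) = \langle w, \fw(\ell)\rangle$. The heart of the proof is the observation that this pairing ``separates'' the primes split in $K$ from those inert in $K$ along the CM decomposition of $W_g^\circ$.

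In the chosen basis of $V_g^L$, the summand $\Q_p(\chi_K)$ consists of diagonal trace-zero matrices while $Y_g$ consists of off-diagonal matrices; these two subspaces are mutually orthogonal under the trace pairing, since the product of a diagonal trace-zero matrix and an off-diagonal matrix is off-diagonal and has trace zero. Formulas \eqref{eqn:fwl-quad-split} and \eqref{eqn:fwl-quad-inert} show that $\fw(\ell) \in Y_g$ when $\ell$ splits in $K$, whereas $\fw(\ell) \in \Q_p(\chi_K)$ when $\ell$ is inert in $K$. Equation \eqref{eqn:unit-imag} moreover places $w(1) \in Y_g$, so the quotient $W_g^\circ/(L\cdot w(1))$ inherits the direct sum decomposition $\Q_p(\chi_K) \oplus \bigl(Y_g/(L\cdot w(1))\bigr)$ into two one-dimensional lines, in perfect agreement with the dimension count of Proposition \ref{prop-S02d}. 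Choosing $w_2 = \tfrac{1}{2}\mathrm{diag}(1,-1) \in \Q_p(\chi_K)$ and setting $g_2^\flat := g_{w_2}^\flat$, orthogonality immediately yields $a_\ell(g_2^\flat) = 0$ at primes $\ell$ split in $K$, while a direct substitution of \eqref{eqn:fwl-quad-inert} gives $a_\ell(g_2^\flat) = R_\psi(\ell)$ at primes $\ell$ inert in $K$, establishing~(ii).

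Analogously, any $w_1 \in Y_g$ outside $L \cdot w(1)$ defines a form $g_1^\flat := g_{w_1}^\flat$ whose coefficients vanish automatically at all inert primes. For $\ell = \lambda\lambda'$ split in $K$, pairing \eqref{eqn:fwl-quad-split} against $w_1$ produces a scalar multiple of $(\psi_g(\lambda)-\psi_g(\lambda'))\log_\wp(\fu_\lambda/\fu_\lambda')$; the delicate point, and the main obstacle I foresee, is that the target formula in (i) is \emph{free} of the unit-logarithms $\log_\wp(\fu_\psi)$ and $\log_\wp(\tau\fu_\psi)$ that appear in both $w(1)$ and in $\fw(\ell)$. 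The correct normalization is forced by choosing $w_1$ as the unique (modulo $L\cdot w(1)$) off-diagonal matrix whose coordinates are inversely proportional to $\log_\wp(\tau\fu_\psi)$ and $\log_\wp(\fu_\psi)$ in exactly the balance that cancels these factors after taking the trace pairing with $\fw(\ell)$. Once this normalization is established, linear independence of $g_1^\flat$ and $g_2^\flat$ is immediate because they span the two distinct one-dimensional summands above, and Proposition \ref{prop-S02d} then implies that they form a basis; the canonicity of the pair (up to the scaling ambiguity of $R_\psi(\ell)$ in the choice of auxiliary unit $\fu$) reflects the canonicity of the CM decomposition of $W_g^\circ$ itself.
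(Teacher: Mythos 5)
Your proposal is correct and follows the same route as the paper: Theorem~\ref{thm-CM} is deduced from Theorem~\ref{thm:exotic} together with the computed matrices \eqref{eqn:fwl-quad-split} and \eqref{eqn:fwl-quad-inert}, using the orthogonality under the trace pairing of the diagonal summand $\Q_p(\chi_K)$ and the off-diagonal summand $Y_g$ (which contains $w(1)$) to produce $g_2^\flat$ from $w_2=\tfrac12\,\mathrm{diag}(1,-1)$ and $g_1^\flat$ from the suitably normalised off-diagonal class modulo $L\cdot w(1)$. You have in fact spelled out more of the linear algebra than the paper's one-line proof does, and your normalisation of $w_1$ (so that pairing with \eqref{eqn:fwl-quad-split} leaves exactly $(\psi_g(\lambda)-\psi_g(\lambda'))\log_{\wp}(\fu_\lambda/\fu_\lambda')$) is the intended one.
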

\begin{proof}
This follows directly from the calculation of the matrices $\fw(\ell)$ in 
\eqref{eqn:fwl-quad-split}
and 
\eqref{eqn:fwl-quad-inert} in light of 
Theorem \ref{thm:exotic}.
\end{proof} 

\begin{example}
Let $\chi$ be the quadratic character of conductor $59$. The space $S(59,\chi)$ is one dimensional
and spanned by the theta series
\[ g = q - q^3 + q^4 - q^5 - q^7 - q^{12} + q^{15} + q^{16} + 2 q^{17} - \cdots .\]
Here $K = \Q(\sqrt{-59})$ and the ring class field attached to $\psim$ is
\[ H = K(\alpha) \mbox{ where } \alpha^3 - 3 \alpha + 46 \sqrt{-59} = 0.\]
The inert primes $\ell$ in $K$ are $2,3,13,23,\cdots$ and 
the unit and first few $\ell$-units are
\[
\begin{array}{rclrcl}
u  & = & \frac{1}{612} \left( 13 \alpha^2 - 7 \sqrt{-59} \alpha - 26 \right),  & u_2  & = & \frac{1}{612} \left( -5 \alpha^2 - 13 \sqrt{-59} \alpha - 194 \right)\\
\\
u_{11}   & =  & \frac{1}{306} \left( 5 \alpha^2 + 13 \sqrt{-59} \alpha - 112 \right), & u_{13}  & = &  \frac{1}{612} \left( 13 \alpha^2 - 7 \sqrt{-59} \alpha - 1250 \right)\\
\\
u_{23} &  = &  \frac{1}{204} \left(- \alpha^2 + 11 \sqrt{-59} \alpha + 138 \right). & & &
\\ 
\end{array}
\]
Let $p = 17$, an irregular prime for $g$. We computed a basis of $q$-expansions for the generalised eigenspace modulo $p^{20}$ and $q^{30,000}$. One observes
that it contains the classical space spanned by the forms $g_\alpha(q)$ and $g(q^p)$ and in addition a complementary space
of dimension two. This space is canonically spanned by two normalised generalised eigenforms 
\[ \tilde{g}_1^\flat = q^3 + \cdots + 0 \cdot q^{p} + \cdots \quad \mbox{and} \quad
\tilde{g}_2^\flat = q^2 + 0 \cdot q^3 + \cdots +  \cdots + 0 \cdot q^{p} + \cdots.\]
Note that the natural scaling of the forms output by our algorithm
is with leading Fourier coefficients equal to $1$. By Theorem \ref{thm-CM} one expects that
for $\ell$ inert in $K$, or $\ell$ split in $K$ but irregular, we have $a_\ell(\tilde{g}_1^\flat) = 0$; and
for $\ell$ split in $K$ we have that 
\[ a_\ell(\tilde{g}_1^\flat) = \frac{\log_p (u_\ell)}{\log_p(u_3)}\]
where $u_\ell$ is a fundamental $\ell$-unit in $K$ (the logarithm of this is well-defined up to sign). 
We checked this to $20$-digits of $17$-adic precision for primes $\ell < 1000$. Further, one expects that 
\[  a_\ell(\tilde{g}_2^\flat) = \frac{R_\psim(\ell)}{R_\psim(2)} \mbox{ for $\ell$ inert in $K$, and } a_\ell(\tilde{g}_2^\flat) = 0 \mbox{ for $\ell$ split in $K$.} \]
We checked this for all split primes $\ell < 30,000$ and for the inert primes $\ell = 2,3,11$ and $23$, constructing
$R_\psim(\ell)$ using the unit $u$ and $\ell$-unit $u_\ell$ above.
\end{example}

\section{RM forms}
\label{sec:rm}
We now turn to the RM setting where 
$F$ is a real quadratic  field and 
$$V_g = \Ind_F^\Q \psi_g,$$
where $\psi_g: \Gal(\bar F/F) \lra L^\times$ is a finite order character of mixed signature.
Letting $\psi_g'$ denote the character deduced from $\psi_g$ by composing it 
with the involution in $\Gal(F/\Q)$, the ratio
 $\psi :=\psi_g/\psi_g'$   is a totally odd $L$-valued  ring class character of $F$. 
 
 As before,  let $H$ denote the ring class field of $F$ which is fixed by the kernel of $\psi$, and set
 $Z:= \Gal(H/F)$ and $G:= \Gal(H/\Q)$.
 Just as in the previous section,
 $$ W_g^\circ = \chi_K \oplus  Y_g, \qquad  Y_g :=  \Ind_K^\Q \psi,$$
 and we can set 
 $$ w(1) = \left( \begin{array}{cc} 
 \log_{\wp}(\fu_F) & 0 \\ 
 0 & - \log_{\wp}(\fu_F) \end{array} \right),$$
 where $\fu_F$ is a fundamental unit of   $F$.

If $\ell$ is split in $K/\Q$, it is easy to see that the vector $w(\ell)$ is proportional
to $w(1)$, and hence that
\begin{equation}
\label{eqn:w-ell-real-split}
\fw(\ell) = 0.
\end{equation}
If $\ell$ is inert in $K$,  let  $U_g$ and $U_g(\ell)$ denote the subspaces
$(\cO_H^\times \otimes Y_g)^{G_\Q}$ and $(\cO_H[1/\ell]^\times \otimes Y_g)^{G_\Q}$.  
The
 the dimensions of these spaces are $0$ and $1$ respectively. 
Choose a prime $\lambda$  of $H$ above $\ell$, and let 
$\fu_\lambda$ and $\fu_\psi(\ell)$ be the elements of 
$\cO_H[1/\ell]^\times$ determined by the relations
$$(\fu_\lambda) =  b_\lambda \lambda + c_\lambda \tau\lambda, \qquad \fu_\psi(\ell) = e_\psi(\fu_\lambda),   \qquad \fu_\psi'(\ell) = \tau \fu_\psi(\ell),$$
where 
$$ \varrho_g(\sigma_\lambda)  = \left(\begin{array}{cc} 0 & b_\lambda \\ c_\lambda & 0 \end{array} 
\right).$$
The $\wp$-adic  logarithms
$$ \log_{\wp}(\fu_\psi(\ell)),  \qquad 
\log_{\wp}(\fu_\psi'(\ell))  $$
are well-defined  invariants of $\ell$ and $\varrho$ which do not depend on the choice of a prime 
$\lambda$ lying above $\ell$, and 
$$ w(\ell) = \frac{1}{2} \times \left(\begin{array}{cc}
0 & \log_{\wp}(\fu_\psi(\ell))  \\ 
\log_{\wp}(\fu_\psi'(\ell)) & 0  \end{array} \right).$$
It follows that
\begin{equation}
\label{eqn:fw-real-inert}
 \fw(\ell) = \log_{\wp}(\fu_F) \times 
 \left(\begin{array}{cc}
0 & \log_{\wp}(\fu_\psi(\ell))  \\ 
-\log_{\wp}(\fu_\psi'(\ell)) & 0  \end{array} \right).
\end{equation}
\begin{theorem}
\label{thm-RMirreg}
The space  $S_1^{(p)}(N,\chi)[[g_\alpha]]_0$ has a canonical basis $(g_1^\flat, g_2^\flat)$ which is characterised by
the properties:
\begin{enumerate}  
\item[(i)]
The fourier coefficients of $g_1^\flat$ and $g_2^\flat$ are $0$ at all primes $\ell\nmid Np$ that are split in $F$. 
\item[(ii)]
 If $\ell$ is inert in $F$,
then
$$ a_\ell(g_1^\flat) = \log_{\wp}(\fu_\psi(\ell)), 
\qquad  
 a_\ell(g_2^\flat) = \log_{\wp}(\fu_\psi'(\ell)).
$$
\end{enumerate}
\end{theorem}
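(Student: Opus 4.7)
The plan is to derive Theorem \ref{thm-RMirreg} directly from Theorem \ref{thm:exotic} by specialising it to the RM setting, in perfect parallel with the derivation of Theorem \ref{thm-CM} in the CM case. All the substantive computations are already in hand: equations \eqref{eqn:w-ell-real-split} and \eqref{eqn:fw-real-inert} provide an explicit matrix form for $\fw(\ell)$ in both the split and inert cases, and what remains is to repackage this information into a canonical basis statement.

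Recall from Theorem \ref{thm:exotic} the isomorphism
$$W_g^\circ/(L\cdot w(1)) \stackrel{\sim}{\lra} S_1^{(p)}(N,\chi)[[g_\alpha]]_0, \qquad w\longmapsto g_w^\flat,$$
characterised by $a_\ell(g_w^\flat) = \langle w,\fw(\ell)\rangle$ for every prime $\ell\nmid Np$. Property (i) is then immediate from \eqref{eqn:w-ell-real-split}: when $\ell$ splits in $F$ we have $\fw(\ell)=0$, forcing $a_\ell(g_w^\flat)=0$ for every $w$. For property (ii), I would note that, relative to the eigenbasis of $V_g$ used to derive \eqref{eqn:fw-real-inert}, the vector $w(1)$ is diagonal while $\fw(\ell)$ is strictly off-diagonal when $\ell$ is inert, so it is natural to lift the two-dimensional quotient $W_g^\circ/(L\cdot w(1))$ by the off-diagonal matrices
$$w_1 := \frac{1}{\log_\wp(\fu_F)}\left(\begin{array}{cc} 0 & 0 \\ 1 & 0 \end{array}\right), \qquad w_2 := -\frac{1}{\log_\wp(\fu_F)}\left(\begin{array}{cc} 0 & 1 \\ 0 & 0 \end{array}\right).$$
A direct trace computation using $\langle A,B\rangle = \Tr(AB)$ against the matrix in \eqref{eqn:fw-real-inert} yields
$$\langle w_1,\fw(\ell)\rangle = \log_\wp(\fu_\psi(\ell)), \qquad \langle w_2,\fw(\ell)\rangle = \log_\wp(\fu_\psi'(\ell)),$$
for every inert prime $\ell\nmid Np$. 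Setting $g_j^\flat := g_{w_j}^\flat$ then produces the claimed formulas, and since $w_1$ and $w_2$ are linearly independent modulo $w(1)$ they descend to a basis of $W_g^\circ/(L\cdot w(1))$ whose images under Theorem \ref{thm:exotic} form the desired canonical basis of $S_1^{(p)}(N,\chi)[[g_\alpha]]_0$.

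No serious technical obstacle is anticipated: the entire argument is a specialisation of Theorem \ref{thm:exotic} combined with the matrix evaluations already performed in \eqref{eqn:w-ell-real-split} and \eqref{eqn:fw-real-inert}. The only point worth double-checking is the intrinsicality of the invariants $\log_\wp(\fu_\psi(\ell))$ and $\log_\wp(\fu_\psi'(\ell))$ appearing in property (ii); but this was already noted in Section \ref{sec:rm}, where the vanishing of $(\cO_H^\times\otimes Y_g)^{G_\Q}$ ensures that the $\psi$- and $\psi^{-1}$-isotypic projections involved are unambiguous up to the choice of a prime $\lambda\mid\ell$, a dependence which is in turn eliminated upon taking the $\wp$-adic logarithm.
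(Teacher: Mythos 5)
Your proposal is correct and is essentially identical to the paper's argument, which simply states that the theorem follows from Theorem \ref{thm:exotic} together with equations \eqref{eqn:w-ell-real-split} and \eqref{eqn:fw-real-inert}; your explicit choice of the off-diagonal lifts $w_1,w_2$ and the trace computations against \eqref{eqn:fw-real-inert} just spell out the details the paper leaves to the reader, and they check out.
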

\begin{proof} This follows directly from Theorem \ref{thm:exotic} in light of 
equations \eqref{eqn:w-ell-real-split} and
\eqref{eqn:fw-real-inert}.
\end{proof}

\begin{example}
Let $\chi_8$ and $\chi_7$ denote the quadratic characters of conductors $8$ and $7$, respectively, and
define $\chi := \chi_8 \chi_7$. Then $S_1(56,\chi)$ is one-dimensional and spanned by the form
\[ g = q - q^2 + q^4 - q^7 - q^8 - q^9 + q^{14} + q^{16} + q^{18} + 2q^{23} - \cdots .\]
We take $p  = 23$, an irregular prime for $g$, and compute a basis for the generalised eigenspace modulo
$(p^{15},q^{3000})$. The two dimensional space complementary to the classical space
has a natural basis
\[ \tilde{g}_1^\flat = q^3 + \cdots + 0 \cdot q^{p} + \cdots \quad \mbox{and} \quad
\tilde{g}_2^\flat = q^2 + 0 \cdot q^3 + \cdots +  \cdots + 0 \cdot q^{p} + \cdots.\]
Take
\[ g_1^\flat := \frac{1}{2} \cdot \log_p(u_2) \cdot \tilde{g}_1^\flat \quad \mbox{and} \quad 
g_2^\flat := \log_p(u_3) \cdot \tilde{g}_2^\flat. \]
Here $u_\ell$, $\ell = 2$ and $3$, denotes a fundamental $\ell$-unit of norm $1$ in
$\Q(\sqrt{-7})$ and $\Q(\sqrt{-56})$, respectively.
One finds that the coefficients at primes $\ell$ which are split in $\Q(\sqrt{8})$ of both forms 
$g_1^\flat$ and $g_2^\flat$ are zero. 
At inert
primes the coefficients of $g_1^\flat$ are the logarithms of fundamental $\ell$-units of
norm $1$ in $\Q(\sqrt{-7})$, and those of  $g_2^\flat$ are the logarithms of
fundamental $\ell$-units of norm $1$ in $\Q(\sqrt{-56})$ (such logarithms are
well-defined up to sign; interestingly, the forms $g_j^\flat$ 
single out a consistent choice of signs). 
\end{example}

 \end{document}